\documentclass[a4paper,12pt]{amsart}
\usepackage{amsfonts}
\usepackage{amssymb}
\usepackage{ifthen}
\usepackage{graphicx}
\nonstopmode \numberwithin{equation}{section}
\setlength{\textwidth}{15cm} \setlength{\oddsidemargin}{0cm}
\setlength{\evensidemargin}{0cm} \setlength{\footskip}{40pt}
\pagestyle{plain}
\usepackage[usenames]{color}
\newtheorem{thm}{Theorem}[section]
\newtheorem{lemma}[thm]{Lemma}
\newtheorem{cor}[thm]{Corollary}
\newtheorem{proposition}[thm]{Proposition}

\newcommand{\red}{\color{red}}

\newtheorem{cl}{Claim}[section]
\newtheorem{ca}{Case}[section]
\newtheorem{sca}{Subcase}[section]
\newtheorem{scl}[section]{Subclaim}
\newtheorem{conj}[equation]{Conjecture}

\theoremstyle{definition}
\newtheorem{defn}[thm]{Definition}
\newtheorem{op}[equation]{Open Problem}
\newtheorem{ques}[equation]{Question}
\newtheorem{rem}[thm]{Remark}
\newtheorem{exam}[equation]{Example}

\newcounter {own}
\def\theown {\thesection       .\arabic{own}}

\newenvironment{pf}[1][]{%
 \vskip 3mm
 \noindent
 \ifthenelse{\equal{#1}{}}%
  {{\slshape Proof. }}%
  {{\slshape #1.} }%
 }%
{\qed\bigskip}

\newcounter{alphabet}

\makeatletter

\makeatother

\newcommand{\IR}{{\mathbb R}}

\newcommand{\id}{{\operatorname{id}}}

\newcommand{\diam}{{\operatorname{diam}}}

\newcommand{\dist}{{\operatorname{dist}}}


\def\be{\begin{equation}}
\def\ee{\end{equation}}

\newcommand{\ben}{\begin{enumerate}}
\newcommand{\een}{\end{enumerate}}

\newcommand{\blem}{\begin{lem}}
\newcommand{\elem}{\end{lem}}

\newcommand{\bthm}{\begin{thm}}
\newcommand{\ethm}{\end{thm}}
\newcommand{\bcor}{\begin{cor}}
\newcommand{\ecor}{\end{cor}}
\newcommand{\beg}{\begin{exam}}
\newcommand{\eeg}{\end{exam}}
\newcommand{\begs}{\begin{examples}}
\newcommand{\eegs}{\end{examples}}
\newcommand{\bdefe}{\begin{defn}}
\newcommand{\edefe}{\end{defn}}
\newcommand{\bprob}{\begin{prob}}
\newcommand{\eprob}{\end{prob}}
\newcommand{\bques}{\begin{ques}}
\newcommand{\eques}{\end{ques}}
\newcommand{\bei}{\begin{itemize}}
\newcommand{\eei}{\end{itemize}}
\newcommand{\bcon}{\begin{conj}}
\newcommand{\econ}{\end{conj}}
\newcommand{\bop}{\begin{op}}
\newcommand{\eop}{\end{op}}

\newcommand{\bca}{\begin{ca}}
\newcommand{\eca}{\end{ca}}
\newcommand{\bsca}{\begin{sca}}
\newcommand{\esca}{\end{sca}}

\newcommand{\bcl}{\begin{cl}}
\newcommand{\ecl}{\end{cl}}

\newcommand{\bscl}{\begin{scl}}
\newcommand{\escl}{\end{scl}}

\newcommand{\bcons}{\begin{conjs}}
\newcommand{\econs}{\end{conjs}}
\newcommand{\bprop}{\begin{propo}}
\newcommand{\eprop}{\end{propo}}
\newcommand{\br}{\begin{rem}}
\newcommand{\er}{\end{rem}}
\newcommand{\brs}{\begin{rems}}
\newcommand{\ers}{\end{rems}}
\newcommand{\bo}{\begin{obser}}
\newcommand{\eo}{\end{obser}}
\newcommand{\bos}{\begin{obsers}}
\newcommand{\eos}{\end{obsers}}
\newcommand{\bpf}{\begin{pf}}
\newcommand{\epf}{\end{pf}}
\newcommand{\ba}{\begin{array}}
\newcommand{\ea}{\end{array}}
\newcommand{\beq}{\begin{eqnarray}}
\newcommand{\beqq}{\begin{eqnarray*}}
\newcommand{\eeq}{\end{eqnarray}}
\newcommand{\eeqq}{\end{eqnarray*}}

\newcounter{minutes}\setcounter{minutes}{\time}
\divide\time by 60
\newcounter{hours}\setcounter{hours}{\time}
\multiply\time by 60 \addtocounter{minutes}{-\time}

\begin{document}

\bibliographystyle{amsplain}
\title{Boundary rigidity of Gromov hyperbolic spaces}

\author{Hao Liang}
\address{Hao Liang, School of Mathematics and Big Data, Foshan University,  Foshan, Guangdong 528000, People's Republic
of China} \email{lianghao1019@hotmail.com}

\author{Qingshan Zhou${}^{\mathbf{*}}$}
\address{Qingshan Zhou, School of Mathematics and Big Data, Foshan University,  Foshan, Guangdong 528000, People's Republic
of China} \email{qszhou1989@163.com; q476308142@qq.com}

\def\thefootnote{}
\footnotetext{ \texttt{\tiny File:~\jobname .tex,
          printed: \number\year-\number\month-\number\day,
          \thehours.\ifnum\theminutes<10{0}\fi\theminutes}
} \makeatletter\def\thefootnote{\@arabic\c@footnote}\makeatother

\date{}
\subjclass[2000]{Primary: 20F67; Secondary: 20F65, 30L10, 53C23} \keywords{
Boundary rigidity, Gromov hyperbolic spaces, uniform perfectness, nonamenable, Cheeger isoperimetric constant, geodesically rich spaces.\\
${}^{\mathbf{*}}$ Corresponding author}

\begin{abstract} We introduce the concept of boundary rigidity for Gromov hyperbolic spaces. We show that a proper geodesic Gromov hyperbolic space with a pole is boundary rigid if and only if its Gromov boundary is uniformly perfect.  As an application, we show that for a non-compact Gromov hyperbolic complete Riemannian manifold or a Gromov hyperbolic  uniform graph, boundary rigidity is equivalent to having positive Cheeger isoperimetric constant  and also to being nonamenable. Moreover, several hyperbolic fillings of compact metric spaces are proved to be boundary rigid if and only if the metric spaces are uniformly perfect. Also, boundary rigidity is shown to be equivalent to being geodesically rich, a concept introduced by Shchur (J. Funct. Anal., 2013).
\end{abstract}

\thanks{Hao Liang was supported by NSFC (No.11701581 and No.11521101). Qingshan Zhou was partly supported by NSF of
China (No. 11901090), by Department of Education of Guangdong Province, China (No. 2021KTSCX116), by Guangdong Basic and Applied Basic Research Foundation (Nos. 2022A1515012441 and 2021A1515012289).}

\maketitle{} \pagestyle{myheadings} \markboth{Hao Liang and Qingshan Zhou}{Boundary rigidity of Gromov hyperbolic spaces}

\section{Introduction and main results}

An important theme of the theory of Gromov hyperbolic spaces is to determine when certain information of a hyperbolic space $X$ is completely captured by its Gromov boundary $\partial X$.  For example: When $X$ has a pole (see Definition \ref{d-p}), the quasi-isometry type of $X$ is completely captured by the quasisymmetry type of $\partial X$, see e.g. \cite{BS,BuSc,Jo10}. In this paper, we consider the following natural question: When can a self quasi-isometry $f$ of $X$ be determined up to a bounded error by its induced map $\partial f$ on $\partial X$? More precisely, we want to decide which Gromov hyperbolic spaces have the following property:

\bdefe[Boundary rigid]
A Gromov hyperbolic metric space $(X,d)$ is called {\em boundary rigid} if it has the following property: For any quasi-isometry $f:X\rightarrow X$ whose induced map on $\partial X$ is the identity map, we have $sup_{x\in X}d(x, f(x))< \infty$. We call $sup_{x\in X}d(x, f(x))$ the {\em displacement} of $f$.
\edefe

Boundary rigid has the following equivalent definition: Let ${\rm QI}(X)$ be the group of self quasi-isometries of $X$, i.e., ${\rm QI}(X)=\{f\mid f:X\rightarrow X \text { is a quasi-isometry}\}/\sim$, where $f\sim g$ if and only if $sup_{x\in X}d(f(x), g(x))< \infty$. It is known that every self quasi-isometry $f$ of a geodesic hyperbolic space $X$ induces a self homeomorphism $\partial f$ of $\partial X$ and quasi-isometries uniformly close to each other induce the same homeomorphism of $\partial X$, see \cite{BrHa,BuSc}. Therefore, there is a natural homomorphism $\partial : {\rm QI}(X)\rightarrow {\rm Homeo}(\partial X)$ which sends $f$ to $\partial f$.

\bdefe[Boundary rigid]
A Gromov hyperbolic metric space $X$ is called {\em boundary rigid} if the natural homomorphism $\partial : {\rm QI}(X)\rightarrow {\rm Homeo}(\partial X)$ is injective.
\edefe

\br As indicated in Proposition \ref{l-bg}, boundary rigidity of geodesic Gromov hyperbolic spaces is a quasi-isometric invariant.
\er

Let $\mathcal{PH}$ be the class of proper geodesic Gromov hyperbolic spaces with a pole.
$\mathcal{PH}$ includes many important examples of Gromov hyperbolic spaces: infinite hyperbolic groups \cite{Gr87}, Gromov hyperbolic domains in $\mathbb{R}^n$ \cite{BHK,ZR}, Gromov hyperbolic manifolds with a pole \cite{Cao,KeL,MR}, a lot of negatively curved solvable Lie groups \cite{D10,D14,SX,X12,X14}, and hyperbolic fillings \cite{BS,BuSc} etc. Our main theorem decides exactly which spaces in $\mathcal{PH}$ are boundary rigid.

\begin{thm}\label{thm-1}
Suppose that $X$ is a proper geodesic Gromov hyperbolic space with a pole. Then the following conditions are equivalent:
\begin{enumerate}
\item $X$ is boundary rigid.
\item $\partial X$ is uniformly perfect.
\end{enumerate}
\end{thm}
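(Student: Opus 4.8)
The plan is to prove both implications, establishing (2)$\Rightarrow$(1) directly and (1)$\Rightarrow$(2) by contraposition. Throughout I fix the pole $o$, a visual metric $\rho=\rho_\varepsilon$ on $\partial X$ satisfying $\rho(\xi,\eta)\asymp e^{-\varepsilon(\xi\mid\eta)_o}$ where $(\cdot\mid\cdot)_o$ is the Gromov product, and I recall that the pole hypothesis means every $x\in X$ lies within a uniform distance of some geodesic ray $[o,\xi)$, so that $(x\mid\xi)_o\approx d(o,x)$ for that $\xi$. I shall freely use $\delta$-thinness of triangles and Morse stability of quasigeodesics, whose constants depend only on $\delta$ and on the quasi-isometry constants of $f$.

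For (2)$\Rightarrow$(1), let $f$ be a quasi-isometry with $\partial f=\mathrm{id}$ and fix $x\in X$ with $t=d(o,x)$ large (small $t$ is trivial, since $f(x)\approx f(o)\approx o\approx x$). The idea is to reconstruct $x$ coarsely from the boundary as an apex. Pick $\xi$ with $x$ near $[o,\xi)$, so $(x\mid\xi)_o\approx t$. Here uniform perfectness enters decisively: applied at the boundary ball of radius $r\asymp e^{-\varepsilon t}$ it yields a uniform $T$ and a boundary point $\eta$ with $(\xi\mid\eta)_o\in[t-T,t+T]$, i.e. a second direction accompanying $\xi$ down to depth $\approx t$ and then diverging. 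Thin triangles show that $x$ lies within a uniform distance of the point $p$ on the bi-infinite geodesic $[\eta,\xi]$ closest to $o$, the apex of the ideal triangle $o,\xi,\eta$. The crux is that apexes are coarsely $f$-invariant: since $\partial f$ fixes $\xi$ and $\eta$, the image $f([\eta,\xi])$ is a quasigeodesic with the same endpoints, hence stays within a uniform Hausdorff distance of $[\eta,\xi]$; since $f$ coarsely preserves nearest-point projections and $d(o,f(o))$ is a constant, the projection of $f(o)$ onto $[\eta,\xi]$ agrees with $p$ up to a uniform additive error. Chaining $f(x)\approx f(p)\approx p\approx x$ gives $d(x,f(x))\le C$ with $C$ independent of $x$, i.e. bounded displacement.

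For (1)$\Rightarrow$(2), assume $\partial X$ is not uniformly perfect and build a quasi-isometry violating rigidity. Failure of uniform perfectness at every ratio translates, through $\rho\asymp e^{-\varepsilon(\cdot\mid\cdot)_o}$, into a sequence of boundary gaps: points $\xi_n$ and depths $s_n$ such that no boundary point $\eta$ has $(\xi_n\mid\eta)_o\in(s_n,s_n+\ell_n]$ with $\ell_n\to\infty$. I would first show each gap yields a bottleneck tube $T_n$: the part of $X$ lying below $\xi_n$ at depths in $(s_n,s_n+\ell_n)$ stays within a uniform distance of the corresponding segment of $[o,\xi_n)$, because any point there is near a ray $[o,\eta)$ whose Gromov product with $\xi_n$ exceeds $s_n$ and is therefore forced by the gap to exceed $s_n+\ell_n$. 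Passing to a subsequence I may take the $T_n$ pairwise disjoint. On each $T_n$ I define $f$ by a bump that slides points along the tube by an amount peaking at $m_n=c\,\ell_n$ in the middle and tapering to the identity at both ends, and I set $f=\mathrm{id}$ off $\bigcup_n T_n$. The taper keeps the stretch factor within $[1-\pi c,1+\pi c]$, so the map is globally a quasi-isometry with constants independent of $n$; it is the identity outside the tubes and only shears inside them, so $f([o,\zeta))$ remains a quasiray to $\zeta$ for every $\zeta\in\partial X$, whence $\partial f=\mathrm{id}$; yet its displacement at the midpoint of $T_n$ is $\approx m_n\to\infty$. Thus $X$ is not boundary rigid.

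The main obstacle is, I expect, the reverse direction: converting an analytic gap in $\partial X$ into an honest bottleneck in $X$, and verifying that the tube-shearing map is simultaneously a genuine quasi-isometry with uniform constants (handling properness and coarse well-definedness), boundary-fixing, and of unbounded displacement. In the forward direction the delicate point is purely one of uniformity, namely guaranteeing that the auxiliary divergence depth and the apex-tracking estimates hold with a single constant for all $x$; this is exactly what uniform perfectness, as opposed to mere perfectness of $\partial X$, is designed to supply.
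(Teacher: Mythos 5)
Your plan follows the paper's proof quite closely in both directions, and the strategy is sound. For (2)$\Rightarrow$(1) the paper argues just as you do, except that instead of the apex of the ideal triangle with vertices $o,\xi,\eta$ it uses uniform perfectness to produce \emph{two} auxiliary boundary points $\xi_i,\xi_{i+2}$ at Gromov depth $\approx d(o,x)$ from $\xi$, locates $x$ in the quasi-centroid set $\Delta_{(K,C,\rho)}(\{\xi_i,\xi_{i+2},\xi\})$, and uses that this set has uniformly bounded diameter and is preserved by $f$ because all three defining points lie on $\partial X$ and are fixed by $\partial f$. Your variant with the basepoint $o$ as one vertex also works, but it needs the two extra (true) observations you invoke: that nearest-point projection onto a geodesic is coarsely preserved by a quasi-isometry, and that $d(o,f(o))$ is finite. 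For (1)$\Rightarrow$(2) the paper likewise contraposes and builds exactly your tube-shearing map (Definition \ref{d-q}: project the bounded-width tube onto its axis and reparametrize the axis piecewise-linearly), composed over a sequence of uniformly separated tubes of length tending to infinity; the only structural difference is that the paper routes through the intermediate statement that boundary rigidity forces the set of quasi-centroids of boundary triples to be roughly full (Propositions \ref{full implies perfect} and \ref{b-l}), which packages your ``gap $\Rightarrow$ bottleneck'' translation.

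The one place where your write-up is materially short of a proof is the claim that the taper ``keeps the stretch factor within $[1-\pi c,1+\pi c]$, so the map is globally a quasi-isometry with constants independent of $n$.'' Bilipschitz control along the axis of a single tube only yields the quasi-isometry inequality for pairs of points both projecting to the interior of that tube. For a pair with one point left fixed (outside the tube, or in the end-caps $X_y(3D)\cup X_z(3D)$) and one point sheared deep inside, and for pairs straddling two different tubes, one needs that any geodesic joining the two ends of a tube passes uniformly close to both endpoints of the axis — this is the content of Lemmas \ref{t-1}--\ref{map} and is again an application of the bottleneck property rather than of the taper. You correctly flag this verification as the main obstacle, but be aware that it (together with the routine extraction of pairwise well-separated tubes from the sequence of boundary gaps, and the check that the infinite composition is well defined because each point is moved by at most one shear) is where most of the technical work of the paper's Section 3 actually lives.
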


\br The implication $(2)\Rightarrow(1)$ in Theorem \ref{thm-1} follows from \cite[Theorem 2.3]{BP}. See \cite[Theorem 1.2]{Z} for a quantitative result. The harder part in Theorem \ref{thm-1} is the implication $(1)\Rightarrow(2)$  which is the main contribution of this paper.
\er

There are several equivalent definitions  for uniformly perfect spaces, see \cite{MR,S01}. The following was introduced by Mart\'inez-P\'erez and Rodr\'iguez in \cite{MR}.

\bdefe\label{def-1}
Let $S\geq 1$, A metric space $(X,d)$ is called {\it  $S$-uniformly perfect}, if there is some $r_0>0$ such that for each $x\in X$ and every $0<r\leq r_0$ there exists a point $y\in X$ such that $r/S< d(x, y)\leq r$. A metric space is called {\it uniformly perfect} if it is $S$-uniformly perfect for some $S\geq 1$.
\edefe

\br
The above definition of uniformly perfect spaces is equivalent to the standard definition (\cite{S01}) when $X$ is bounded.  Since we only consider this property in bounded spaces, all our results work as well with the standard definition, see \cite{MR}.
\er

Examples of uniformly perfect spaces include all connected spaces, many disconnected fractals such as the Cantor ternary set, Julia sets and the limit sets of nonelementary, finitely generated Kleinian groups of $\overline{\IR}^n$, see \cite{S01} and the references therein. Spaces with isolated points are not uniformly perfect.

Uniform perfectness is connected to several important concepts in function theory, geometry, group theory and geometric group theory. See \cite{Cao,MR,S01,Vu84}. Hence, by Theorem \ref{thm-1}, boundary rigidity is also connected to these concepts:

\begin{thm}\label{lz-3}
Assume that $X$ is a non-compact complete Riemannian manifold or a uniform graph which has bounded local geometry. If $X$ is Gromov hyperbolic and admits a pole, then the following conditions are equivalent:
\begin{enumerate}
\item $X$ is boundary rigid.
\item $\partial X$ is uniformly perfect.
\item $X$ has positive Cheeger isoperimetric constant.
\item $X$ is nonamenable.
\end{enumerate}
\end{thm}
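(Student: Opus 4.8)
The plan is to deduce the four-fold equivalence by reducing $(1)\Leftrightarrow(2)$ to the main theorem and then splicing in the two geometric conditions $(3)$ and $(4)$ via known isoperimetric characterizations. First I would observe that the standing hypotheses force $X$ into the class $\mathcal{PH}$: bounded local geometry of a complete Riemannian manifold supplies a positive injectivity radius together with a lower curvature bound, hence properness, while for a uniform graph bounded local geometry means locally finite of uniformly bounded degree, again proper as a length space. Combined with geodesic completeness, Gromov hyperbolicity, and the existence of a pole (Definition \ref{d-p}), this places $X$ in $\mathcal{PH}$, so Theorem \ref{thm-1} yields $(1)\Leftrightarrow(2)$ directly; non-compactness guarantees that $\partial X$ and the relevant isoperimetric quantities are non-trivial.

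Next I would dispose of $(3)\Leftrightarrow(4)$ using the classical dictionary between isoperimetry and amenability. For a bounded-degree graph, nonamenability is precisely the statement that the vertex (equivalently edge) isoperimetric constant is bounded away from zero, so the equivalence is essentially definitional once amenability is taken in the F{\o}lner sense, the passage between the F{\o}lner condition and positivity of the Cheeger constant being a standard argument. For a complete manifold of bounded geometry the same equivalence holds, with Cheeger's and Buser's inequalities relating the Cheeger constant to the bottom of the $L^2$-spectrum and the F{\o}lner characterization controlling the latter.

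The heart of the matter is the remaining link $(2)\Leftrightarrow(3)$, which ties the metric geometry of $\partial X$ to the isoperimetry of the bulk. Here I would invoke Cao's analysis \cite{Cao} of Cheeger isoperimetric constants for Gromov hyperbolic spaces with quasi-poles. The underlying picture is that, in a visual metric on $\partial X$, a ball of radius comparable to $e^{-\epsilon t}$ is the shadow of a metric ball based at distance $t$ from the pole; uniform perfectness says every such shadow carries boundary points at a fixed fraction of its radius, which translates into geodesic rays from the pole branching at a uniform exponential rate. This uniform branching forces the volume (resp.\ cardinality) of a ball to dominate the measure of its boundary sphere, i.e.\ a positive Cheeger constant; conversely a positive Cheeger constant precludes the ``thin necks'' in $X$ whose presence would create uniformly isolated gaps in $\partial X$, destroying uniform perfectness.

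The main obstacle I anticipate is the bookkeeping in the step $(2)\Leftrightarrow(3)$: one must confirm that the pole of Definition \ref{d-p} yields a quasi-pole in Cao's sense, that the visual metric in which uniform perfectness is formulated matches the one underlying Cao's shadow estimates (up to the usual bi-Lipschitz/snowflake ambiguity of visual metrics), and that bounded local geometry provides the two-sided comparison of volumes of small balls needed to convert the boundary-branching estimate into a genuine isoperimetric inequality. Once these normalizations are reconciled, the chain $(1)\Leftrightarrow(2)\Leftrightarrow(3)\Leftrightarrow(4)$ closes and the theorem follows.
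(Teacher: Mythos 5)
Your overall architecture coincides with the paper's: the equivalence $(1)\Leftrightarrow(2)$ is exactly Theorem \ref{thm-1}, and $(3)\Leftrightarrow(4)$ is the classical F{\o}lner-type dictionary between positivity of the Cheeger constant and nonamenability (the paper cites \cite[Theorem 51]{CGH} and \cite[Proposition 2.3]{Ka} for this). The paper also inserts a preliminary reduction that you omit: it first replaces the manifold by a quasi-isometric uniform graph via \cite[Proposition 5.2]{MR} and observes that all four properties are quasi-isometry invariants, so that only the graph case needs to be treated. That omission is harmless, but the following point is not.

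The genuine gap is in your treatment of $(2)\Leftrightarrow(3)$, which you correctly identify as the heart of the matter but for which you invoke the wrong input. Cao's theorem in \cite{Cao} establishes only one implication, and under a hypothesis strictly stronger than uniform perfectness: it assumes that the diameter of each \emph{connected component} of $\partial X$ is bounded below by a positive constant. A uniformly perfect boundary can be totally disconnected (e.g.\ a Cantor set arising as the boundary of a free group or of a suitable hyperbolic filling), in which case every component is a point and Cao's hypothesis fails outright; so Cao's result does not even give $(2)\Rightarrow(3)$ in the generality needed here. The converse $(3)\Rightarrow(2)$ --- that a positive Cheeger constant forces uniform perfectness of the boundary --- is not in \cite{Cao} at all, and your one-sentence justification (``a positive Cheeger constant precludes thin necks whose presence would create uniformly isolated gaps in $\partial X$'') is precisely the nontrivial content that would have to be proved. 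Both implications, in exactly the form required, are the recent theorem of Mart\'inez-P\'erez and Rodr\'iguez \cite[Theorems 4.15 and 5.12]{MR}, which is what the paper cites; the shadow/branching heuristic you describe is indeed the idea behind their argument, but as written your proof does not close without substituting that reference (or reproving their result) for the appeal to Cao.
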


\begin{proof} First, one observes from \cite[Proposition 5.2]{MR} that every non-compact complete Riemannian manifold with bounded local geometry is quasi-isometric to a uniform graph. It is known that the all properties mentioned in the theorem are all quasi-isometric invariants, such as Gromov hyperbolicity, uniformly perfectness, positive Cheeger isoperimetric constant and nonamenability, see \cite{MR}. Therefore, without loss of generality, we may assume that $X$ is a uniform graph.

The equivalence of (1) and (2) follows from Theorem \ref{thm-1}. The equivalence of (2) and (3) is recently established by Mart\'inez-P\'erez and Rodr\'iguez in \cite{MR}. The equivalence of (3) and (4) is a well known theorem of Folner, see \cite[Theorem 51]{CGH} or \cite[Proposition 2.3]{Ka}.
\end{proof}

\br For precise definitions of the concepts appeared in Theorem \ref{lz-3} we refer to \cite{Cao,MR}. The Cheeger isoperimetric constant is related to the work of Ancona \cite{A87,A88} about potential theory on hyperbolic manifolds and graphs. For such a space with boundary rigidity, the Dirichlet problem at infinity is solvable and the Martin boundary is homeomorphic to the Gromov boundary, see \cite{HLV,MR}. Amenability is an important property in the study of Brownian motion, harmonic analysis and random walks on graphs and manifolds, see \cite{Ka}. There are numerous equivalent conditions for this notation in geometric group theory, see \cite{CGH,Ko,NY} and the references therein.
\er

We now make several remarks on the requirement we put on the space $X$ in Theorem \ref{thm-1}:

\begin{rem}

\begin{enumerate}
  \item Since $\partial X$ equipped with all visual metrics are power quasisymmetrically equivalent to each other and since uniform perfectness is preserved by power quasisymmetries (\cite[Theorem 5.9 and Proposition 5.10]{MR}), one may state that $\partial X$ is uniformly perfect without specifying certain visual metric.

\item Without requiring that the space has a pole, it is easy to construct Gromov hyperbolic spaces with uniformly perfect boundaries that are not boundary rigid. Here is an example: Let $\mathbb{H}$ be the hyperbolic plane and $x\in\mathbb{H}$. Attach a sequence of segments $\{I_n=[0, n]\}_{n\in\mathbb{N}}$ to $x$ by identifying $0$ with $x$. Let $X$ be the resulting space endowed with the induced length metric. Then $\partial X=\partial \mathbb{H}=S^1$, which is uniformly perfect. However, $X$ is not boundary rigid since it admits quasi-isometries to itself that fix $\mathbb{H}$ and ``stretch'' and ``shrink''  the attached segments. In fact, it is clear from the above construction that every Gromov hyperbolic space with a uniformly perfect boundary can be embedded into a Gromov hyperbolic space with the same boundary that is not boundary rigid.

\item We expect Theorem \ref{thm-1} to be true if one only assumes that $X$ has quasi-geodesic between any two points or quasi-isometric to such a space. However for spaces with isolated points being very far apart, Theorem \ref{thm-1} is not true as the following example shows: Let $X=\{\pm x_n\mid x_n=2^{(2^n)},\;n\in \mathbb{Z}\}$ endowed with the Euclidean metric $|\cdot|$ of $\mathbb{R}$. Let $f:X\rightarrow X$ be a $(K, C)$-quasi-isometry such that its induced map $\partial f$ fixes $\partial X$. One finds that there exists $n_0$ depending only on $K$ and $C$, such that if $m> n\geq n_0$, then we have $|x_{m}-x_{m+1}|> K|x_n- x_{n+1}|+C$. One can show that $f$ has to fix $x_m$ for all $m>n_0$. Hence $X$ is boundary rigid but $\partial X$ is not uniformly perfect.
\end{enumerate}
\end{rem}

Boundary rigidity was previously investigated in \cite{ZR}, where the fact that the boundary being uniformly perfect implies  boundary rigidity was proved by the second author and Rasila. They were motivated by Teichmuller's displacement problem for the class of quasiconformal maps with identity boundary values and their proof uses tools from complex analysis. See  \cite{T,ZR} for the connections between boundary rigidity, Teichmuller's displacement problem and quasiconformal maps. We provide a different proof by using only Gromov hyperbolic geometry.

Boundary rigidity was also investigated in \cite{Sh}, where geodesically rich hyperbolic spaces  (see Definition \ref{gr}) were introduced and showed to be boundary rigid.  As an application of our main theorem we prove:

\begin{thm}\label{cor-111}
Let $X$ be a proper geodesic Gromov hyperbolic space with a pole. Then the following are equivalent:
\begin{enumerate}
  \item $X$ is boundary rigid.
  \item $\partial X$ is uniform perfect.
  \item $X$ is geodesically rich.
\end{enumerate}
\end{thm}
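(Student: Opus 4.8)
The plan is to deduce Theorem \ref{cor-111} from our main theorem together with Shchur's work. By Theorem \ref{thm-1} the conditions (1) and (2) are equivalent, and by \cite{Sh} every geodesically rich space is boundary rigid, i.e. $(3)\Rightarrow(1)$. Hence it suffices to close the cycle by establishing $(2)\Rightarrow(3)$: if $\partial X$ is uniformly perfect, then $X$ is geodesically rich. Once this implication is proved, the three conditions propagate around the cycle $(2)\Rightarrow(3)\Rightarrow(1)\Rightarrow(2)$, giving the full equivalence.

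For $(2)\Rightarrow(3)$ I would fix the pole $o$ and work with a visual metric $\rho$ on $\partial X$ based at $o$ with parameter $\epsilon$, so that $\rho(\xi,\eta)\asymp e^{-\epsilon(\xi|\eta)_o}$ up to a multiplicative constant depending only on $\delta$ and $\epsilon$; let $S$ and $r_0$ be the constants witnessing that $\partial X$ is $S$-uniformly perfect. Since $\partial X\ne\emptyset$ (the case $\partial X=\emptyset$ being trivial), uniform perfectness forces $\partial X$ to be perfect, in particular infinite, so that (as $X$ is proper and geodesic) any pair of distinct boundary points is joined by a bi-infinite geodesic. Now fix $x\in X$ and set $t=d(o,x)$. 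Using the pole property, I would choose a geodesic ray $\alpha=[o,\xi)$, $\xi\in\partial X$, passing within a uniformly bounded distance $M$ of $x$; thus $x$ lies within $M+O(\delta)$ of the point of $\alpha$ at parameter $\approx t$.

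The role of uniform perfectness is to supply a second endpoint at exactly the right scale. Assuming $t$ is large enough that $r:=e^{-\epsilon t}\le r_0$, apply uniform perfectness to $\xi$ with radius $r$ to obtain $\eta\in\partial X$ with $r/S<\rho(\xi,\eta)\le r$. Translating back through the visual metric pins the Gromov product into a bounded window $t-c\le(\xi|\eta)_o\le t+c$, with $c$ depending only on $S,\epsilon,\delta$. Consequently the bi-infinite geodesic $\gamma=(\xi,\eta)$ has its point closest to $o$ at level $\approx t$ and fellow-travels the ray $\alpha$ up to that level; hence $\gamma$ passes within a uniformly bounded distance of $x$, and the nearest-point projection of $x$ to $\gamma$ lies near this turning point. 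Because $\rho(\xi,\eta)>r/S$, the endpoints $\xi,\eta$ are not too close, so the two subrays of $\gamma$ emanating from the turning point diverge at a definite, scale-independent rate. This is precisely the quantitative behaviour required by Definition \ref{gr}, with constants independent of $x$. The remaining scales, where $t<\frac1\epsilon\log(1/r_0)$, correspond to the bounded set $\{x:d(o,x)\le\frac1\epsilon\log(1/r_0)\}$, which I would handle uniformly by fixing a single bi-infinite geodesic through a bounded neighbourhood of $o$ and invoking stability of quasi-geodesics.

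I expect the main obstacle to be the bookkeeping that converts the purely metric statement of uniform perfectness on $\partial X$ into the geometric statement that the turning point of $\gamma$ sits near $x$, with all of Shchur's constants made uniform in $x$. This rests on the standard but delicate $\delta$-hyperbolic estimates relating Gromov products, the level of closest approach of a bi-infinite geodesic, the fellow-traveling of $\gamma$ with the rays $[o,\xi)$ and $[o,\eta)$, and the comparison between $\rho$ and $e^{-\epsilon(\cdot|\cdot)_o}$; the additive ambiguities in all of these must be absorbed into the single window constant $c$ and the richness constants. A secondary point to verify carefully is that the lower bound $\rho(\xi,\eta)>r/S$ genuinely yields the divergence of the two subrays demanded by Definition \ref{gr}, and not merely the existence of a nearby geodesic; this is exactly what distinguishes geodesic richness from the weaker property that every point lies close to some bi-infinite geodesic.
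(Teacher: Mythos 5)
Your top-level reduction is exactly the paper's: (1) and (2) are equivalent by Theorem \ref{thm-1}, (3) implies (1) is Shchur's theorem, and the new content is the implication $(2)\Rightarrow(3)$, which the paper proves as Proposition \ref{p: up implies gr}. However, your sketch of $(2)\Rightarrow(3)$ has a genuine gap. Condition (1) of Definition \ref{gr} demands, for each pair $p,q$ with $d(p,q)\geq r_0$, a bi-infinite geodesic $\gamma$ with $d(p,\gamma)<r_1$ \emph{and} $|d(q,\gamma)-d(p,q)|<r_2$. Your construction produces a single geodesic $\gamma=(\xi,\eta)$ whose turning point sits near $p$, built from just two boundary points and depending only on $p$; it never engages with $q$. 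Such a $\gamma$ cannot satisfy the second inequality for all admissible $q$: take $q$ on $\gamma$ itself (or on the ray $[o,\xi)$ beyond the turning point, which $\gamma$ fellow-travels) at distance $\geq r_0$ from $p$; then $d(q,\gamma)=0$ while $d(p,q)\geq r_0$. The "divergence of the two subrays" you invoke is not the relevant quantity -- the lower bound $\rho(\xi,\eta)>r/S$ only controls where the turning point is, not the distance from $\gamma$ to an arbitrary $q$. What is needed is a mechanism to choose $\gamma$ \emph{depending on $q$} so that it stays at distance roughly $d(p,q)$ from $q$.

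The paper supplies this mechanism with a third boundary point: by Proposition \ref{b-l} (via boundary rigidity, which at this stage is already available from $(2)\Rightarrow(1)$), every $p$ lies within a uniform distance of a quasi-centroid of some triple $\{\xi_1,\xi_2,\xi_3\}\subset\partial X$, i.e.\ near all three sides of an ideal triangle. Given $q$, one locates its closest point on the tripod and selects the \emph{opposite} side $\gamma$; any path from $q$ to that side must essentially pass through the centroid region near $p$, which yields $|d(q,\gamma)-d(p,q)|<r_2$ (this is the case analysis in the proof of Proposition \ref{p: up implies gr}). Your two-point construction has no analogue of this selection. A secondary omission: you do not address condition (2) of Definition \ref{gr} at all; the paper disposes of it by first proving (Lemma \ref{f implies s}) that in a proper geodesic hyperbolic space condition (1) implies condition (2), and you would need either that reduction or a separate argument.
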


\cite[Question 1]{Sh} asks if a hyperbolic space can always be isometrically embedded into a geodesically rich hyperbolic space with an isomorphic boundary. While there are spaces without such embedding (See Section 4 for an example), the next corollary, together with Theorem \ref{cor-111}, completely characterizes all spaces admitting such an embedding.

\bcor\label{cor-112} Let $X$ be a proper geodesic Gromov hyperbolic space. If $X$ quasi-isometrically embeds into a geodesically rich Gromov hyperbolic space $Y$ with an isomorphic boundary, then $X$ is boundary rigid and $\partial X$ is uniformly perfect.
\ecor

A {\it hyperbolic filling} $\mathcal{H}(Z)$ of  a bounded complete metric space $(Z,d)$ is a Gromov hyperbolic space whose Gromov boundary (endowed with a visual metric) can be identified with the space $(Z,d)$ via a bilipschitz map. Hyperbolic fillings are useful in the study of asymptotic geometry and function space, see \cite{BoSa,Cao,MR}.  Several different constructions of hyperbolic fillings have been introduced (\cite{BS,BP,BuSc}), all of which produces a proper geodesic Gromov hyperbolic space with a pole provided $Z$ is compact. Hence Theorem \ref{thm-1} immediately yields:

\bcor Let $(Z,d)$ be a compact  metric space, and let $X$ be its hyperbolic filling in the sense of Bonk-Schramm \cite{BS}, Bourdon-Pajot \cite{BP}, and Buyalo-Schroeder \cite{BuSc}. Then $X$ is boundary rigid if and only if $Z$ is uniformly perfect.
\ecor

Here is the outline of this paper: In Section 2, we recall the definitions and results needed in the proof of Theorem \ref{thm-1}. Then we prove Theorem \ref{thm-1} in Section 3. In Section 4, we prove Theorem \ref{cor-111} and Corollary \ref{cor-112}.

\section{Preliminaries and auxiliary results}\label{sec-2}

In this section, we recall the basic definitions and some basic results that we need in the proof of Theorem \ref{thm-1}.

\bdefe
Let $f:(X,d)\to (Y,d')$ be a map between metric spaces $X$ and $Y$, and let $K\geq 1$ and $C\geq 0$ be constants. If for all $x,y\in X$ we have
$$K^{-1}d(x,y)-C\leq d'(f(x),f(y))\leq Kd(x,y)+C,$$
then $f$ is called a {\it $(K, C)$-quasi-isometric embedding}. If in addition, every point $y\in Y$ has distance at most $C$ from the set $f(X)$, then $f$ is called a {\it $(K, C)$-quasi-isometry}. In this case, there exists a quasi-isometry $g:(Y, d')\to (X, d)$ such that $f\circ g\sim \id_{Y}$ and $g\circ f\sim \id_{X}$. Moreover, $g$ is said to be an {\it inverse} of $f$.

We say that $X$ {\em quasi-isometrically embeds} into $Y$ if there is a quasi-isometric embedding from $X$ to $Y$. We say that $X$ is {\em quasi-isometric} to $Y$ if there is a quasi-isometry between $X$ and $Y$.
\edefe

\bdefe Let $I\subseteq \mathbb{R}$ be an interval. A curve $\gamma:I\to X$ is called a {\it $(K,C)$-quasi-geodesic} if $\gamma$ is a $(K,C)$-quasi-isometric embedding.
\edefe

\bdefe
A subset $E$ of $X$ is called {\em $M$-roughly full} if the $M$-neighborhood of $E$ in $X$ equals $X$.
\edefe

We now recall the definitions of Gromov hyperbolic spaces and their Gromov boundaries, see \cite{BHK,BS,BrHa,BuSc}. Let $(X,d)$ be a metric space. Fix a base point $w$ in $X$.
\begin{enumerate}

\item The space $X$ is called {\it geodesic}, if each pair of points $x,y\in X$ can be joined by a geodesic $[x,y]$; that is, a curve whose length is precisely the distance between $x$ and $y$.
\item
Suppose $(X,d)$ is geodesic. The metric space $X$ is called {\it $\delta$-hyperbolic} $(\delta\geq 0)$ if each point on the edge of any geodesic triangle is within distance $\delta$ of the other two edges. If $X$ is $\delta$-hyperbolic for some $\delta\geq 0$, then we say that it is Gromov hyperbolic.
\item
For $x,y\in X$, the {\it Gromov product} of $x,y$ with respect to $w$ is
$$(x|y)_w=\frac{1}{2}\big(d(x,w)+d(y,w)-d(x,y)\big).$$
\item
Suppose $(X, d)$ is $\delta$-hyperbolic.
\begin{enumerate}
\item
A sequence $\{x_i\}$ in $X$ is called a {\it Gromov sequence} if $(x_i|x_j)_w\rightarrow \infty$ as $i,$ $j\rightarrow \infty.$
\item
Two such Gromov sequences $\{x_i\}$ and $\{y_j\}$ are said to be {\it equivalent} if $(x_i|y_i)_w\rightarrow \infty$ as $i\to\infty$.
\item
The {\it Gromov boundary} $\partial X$ of $X$ is defined to be the set of equivalence classes of Gromov sequences, and $\bar X=X \cup \partial  X$ is called the {\it Gromov closure} of $X$.
\item
For $x\in X$ and $\xi\in \partial X$, the Gromov product $(x|\xi)_w$ of $x$ and $\xi$ is defined by
$$(x|\xi)_w= \inf \big\{ \liminf_{i\rightarrow \infty}(x|u_i)_w\mid\; \{u_i\}\in \xi\big\}.$$
\item
For $\xi,$ $\zeta\in \partial X$, the Gromov product $(\xi|\zeta)_w$ of $\xi$ and $\zeta$ is defined by
$$(\xi|\zeta)_w= \inf \big\{ \liminf_{i\rightarrow \infty}(x_i|y_i)_w\mid\; \{x_i\}\in \xi\;\;{\rm and}\;\; \{y_i\}\in \zeta\big\}.$$
\end{enumerate}
\end{enumerate}

Following \cite{BrHa}, we now recall the definition of the $\delta$-thin condition. Given any three positive numbers $a, b, c$, we can consider the metric tree $T (a, b, c)$ that has three vertices of valence one, one vertex of valence three, and edges of length $a$, $b$ and $c$. Such a tree is called a {\it tripod}. We call the valence three vertex the {\it center} of the tripod. Extend the definition of tripod in the obvious way to cover the cases where $a$, $b$ and $c$ are allowed to be zero and $\infty$. When $a,b,c$ are all $\infty$, we call $T (a, b, c)$ an {\it infinite tripod}.
Given any three points $x, y, z$ in a metric space, the triangle equality tells us that there exist unique non-negative numbers $a, b, c$ such that $d(x, y) = a + b$, $d(x, z) = a+c$ and $d(y,z) = b+c$; Note that $a = (y|z)_x$, $b = (x|z)_y$ and $c = (x|y)_z$. There is an isometry from $\{x, y, z\}$ to a subset of the vertices of $T(a, b, c)$ (the vertices of valence one in the non-degenerate case); we label these vertices $v_x,v_y,v_z$ in the obvious way.
Given a geodesic triangle, $\Delta=\Delta(x,y,z)$, we define the {\em comparison tripod} $T_{\Delta}$ of $\Delta$ by $T_{\Delta} :=T(a,b,c)$. The above map $\{x, y, z\}\rightarrow\{v_x, v_y, v_z\}$ extends uniquely to a map $\chi_{\Delta}: \Delta \rightarrow T_{\Delta}$ whose restriction to each side of $\Delta$ is an isometry.

\bdefe Let $\Delta$ be a geodesic triangle in a metric space $(X,d)$ and consider the map $\chi_{\Delta}: \Delta\rightarrow T_{\Delta}$ defined above. Let $\delta\geq 0$. Then
 $\Delta$ is said to be {\em $\delta$-thin} if $p,q\in \chi_{\Delta}^{-1}(t)$ implies $d(p,q) \leq \delta$, for all $t\in T_{\Delta}$.
\edefe

All triangles in a $\delta$-hyperbolic space are $\delta'$-thin for some constant $\delta'$ depending only on $\delta$ (See \cite[Proposition 1.17]{BrHa}). Enlarging $\delta$ if necessary, we assume all triangles in all $\delta$-hyperbolic spaces we consider in this paper are $\delta$-thin.

When $(X,d)$ is proper and geodesic, $\partial X$ can be identified with the geodesic boundary $\partial_G X$. By definition $\partial_G X$ is the set of all equivalence classes of geodesic rays, where two geodesic rays are {\it equivalent} if the Hausdorff distance between them is finite.

We now recall the definition of visual metrics on $\partial X$. See \cite{BrHa,BuSc} for more details.

\bdefe\label{a-1}
Let $X$ be a $\delta$-hyperbolic space with $\delta >0$ and $w\in X$ a base point. For $0<\varepsilon<\min\{1,\frac{1}{5\delta}\}$, define
$$\rho_{w,\varepsilon}(\xi,\zeta)=e^{-\varepsilon(\xi|\zeta)_w}$$
for all $\xi,\zeta\in \partial X$ with convention $e^{-\infty}=0$.

Let
$$d_{w,\varepsilon} (\xi,\zeta):=\inf\Big\{\sum_{i=1}^{n} \rho_{w,\varepsilon} (\xi_{i-1},\xi_i)\mid\;n\geq 1,\xi=\xi_0,\xi_1,\ldots,\xi_n=\zeta\in \partial X\Big\}.$$
Then $(\partial X, d_{w,\varepsilon})$ is a metric space and we call $d_{w,\varepsilon}$ the {\it visual metric} on $\partial X$ with respect to $w\in X$ and the parameter $\varepsilon$.
\edefe

The following is a useful estimate of the metric $d_{w, \varepsilon}$.

\begin{lemma}\label{l: visual metric} $($\cite{BuSc}$)$
For any $\xi,\zeta\in \partial X$, we have
$$\rho_{w,\varepsilon}(\xi,\zeta)/2\leq d_{w,\varepsilon}(\xi,\zeta)\leq \rho_{w,\varepsilon}(\xi,\zeta).$$
\end{lemma}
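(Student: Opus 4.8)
The plan is to handle the two inequalities separately. The upper bound $d_{w,\varepsilon}(\xi,\zeta)\le\rho_{w,\varepsilon}(\xi,\zeta)$ is immediate: I would insert the trivial one-step chain $\xi=\xi_0,\xi_1=\zeta$ into the defining infimum, which bounds $d_{w,\varepsilon}(\xi,\zeta)$ above by the single summand $\rho_{w,\varepsilon}(\xi,\zeta)$. All of the work goes into the lower bound $\rho_{w,\varepsilon}(\xi,\zeta)/2\le d_{w,\varepsilon}(\xi,\zeta)$, and it is here that the restriction $0<\varepsilon<\min\{1,\frac1{5\delta}\}$ from Definition \ref{a-1} becomes essential.

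First I would establish that $\rho_{w,\varepsilon}$ is a near-ultrametric. Starting from the hyperbolic inequality for the extended Gromov product, $(\xi|\zeta)_w\ge\min\{(\xi|\eta)_w,(\eta|\zeta)_w\}-\delta'$, valid for $\xi,\eta,\zeta\in\partial X$ with $\delta'$ a fixed multiple of $\delta$ obtained by passing to the limit in the $\inf$--$\liminf$ definition of the boundary Gromov product, exponentiation gives $\rho_{w,\varepsilon}(\xi,\zeta)\le K\max\{\rho_{w,\varepsilon}(\xi,\eta),\rho_{w,\varepsilon}(\eta,\zeta)\}$ with $K=e^{\varepsilon\delta'}$. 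The role of the constraint $\varepsilon<\frac1{5\delta}$ is precisely to guarantee $K^2\le2$.

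With this inequality in hand, I would show by induction on the length $n$ of a chain $\xi=\xi_0,\dots,\xi_n=\zeta$ that $\rho_{w,\varepsilon}(\xi,\zeta)\le 2S$, where $S=\sum_{i=1}^n\rho_{w,\varepsilon}(\xi_{i-1},\xi_i)$; passing to the infimum over all chains then yields the desired lower bound. For the inductive step I would pick the largest index $k$ with $\sum_{i=1}^k\rho_{w,\varepsilon}(\xi_{i-1},\xi_i)\le S/2$. The initial subchain then has sum $\le S/2$ and, by maximality of $k$, the terminal subchain $\xi_{k+1},\dots,\xi_n$ has sum $<S/2$, while the middle edge obeys $\rho_{w,\varepsilon}(\xi_k,\xi_{k+1})\le S$. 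Applying the induction hypothesis to the two subchains bounds $\rho_{w,\varepsilon}(\xi_0,\xi_k)$ and $\rho_{w,\varepsilon}(\xi_{k+1},\xi_n)$ by $S$ each, and two applications of the near-ultrametric inequality give $\rho_{w,\varepsilon}(\xi_0,\xi_n)\le K^2S\le2S$, closing the induction.

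The principal obstacle is constant bookkeeping rather than any conceptual difficulty. One must pin down the exact multiple $\delta'$ of $\delta$ in the boundary near-ultrametric inequality, since the $\inf$--$\liminf$ definition of $(\cdot|\cdot)_w$ on $\partial X$ can degrade the additive error relative to the interior three-point estimate, and then confirm that the threshold $\varepsilon<\frac1{5\delta}$ really does force $e^{2\varepsilon\delta'}\le2$, which is exactly the inequality the splitting argument consumes. Everything else is routine.
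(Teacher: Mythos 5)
The paper gives no proof of this lemma at all---it simply cites Buyalo--Schroeder---and your chain-splitting induction is exactly the proof of the metrization lemma in that reference (a $K$-quasi-metric with $K^{2}\le 2$ satisfies $\rho/2\le d\le\rho$), so your approach coincides with the one the paper relies on. The one loose end you flag does close: in the $\inf$--$\liminf$ definition of the boundary Gromov product the infimum sits on the favorable side of the three-point inequality, so the inequality survives the passage to $\partial X$ with the same additive constant $\delta$ as in the interior, and hence $K^{2}=e^{2\varepsilon\delta}<e^{2/5}<2$ under the standing hypothesis $\varepsilon<\frac{1}{5\delta}$.
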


Now, we prove that boundary rigidity of geodesic Gromov hyperbolic spaces is preserved under quasi-isometries.

\begin{proposition}\label{l-bg} Let $(X,d)$ and $(Y,d')$ be geodesic $\delta$-hyperbolic spaces. If $\varphi:X\to Y$ is a $(\lambda,\mu)$-quasi-isometry and $Y$ is boundary rigid, then  $X$ is also boundary rigid.
\end{proposition}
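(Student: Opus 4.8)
The plan is to transport the problem from $X$ to $Y$ by conjugating with $\varphi$. Let $\psi:Y\to X$ be a quasi-inverse of $\varphi$, so that $\varphi\circ\psi\sim\id_Y$ and $\psi\circ\varphi\sim\id_X$, with explicit bounds $\sup_{y\in Y}d'(\varphi\psi(y),y)\le B'$ and $\sup_{x\in X}d(\psi\varphi(x),x)\le B$. Suppose $f:X\to X$ is a quasi-isometry with $\partial f=\id_{\partial X}$; the goal is to show $\sup_{x\in X}d(x,f(x))<\infty$. I would set $g:=\varphi\circ f\circ\psi:Y\to Y$, which is again a quasi-isometry as a composition of quasi-isometries. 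Morally, $g$ is the image of $f$ under the conjugation isomorphism ${\rm QI}(X)\to{\rm QI}(Y)$, it lies in the kernel of $\partial$ on $Y$, and hence---by boundary rigidity of $Y$---is trivial; pulling this back will force $f$ to be trivial.

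First I would verify that $\partial g=\id_{\partial Y}$. Using functoriality of the induced boundary map for quasi-isometries between geodesic hyperbolic spaces (\cite{BrHa,BuSc}), we have $\partial g=\partial\varphi\circ\partial f\circ\partial\psi$. Since $\partial f=\id_{\partial X}$, this reduces to $\partial\varphi\circ\partial\psi=\partial(\varphi\circ\psi)$. As $\varphi\circ\psi\sim\id_Y$ and uniformly close quasi-isometries induce the same boundary homeomorphism, $\partial(\varphi\circ\psi)=\partial(\id_Y)=\id_{\partial Y}$. Thus $g$ is a self quasi-isometry of $Y$ inducing the identity on $\partial Y$, and boundary rigidity of $Y$ yields a constant $M$ with $\sup_{y\in Y}d'(g(y),y)\le M$.

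Next I would convert this bound on the displacement of $g$ into a bound on the displacement of $f$, using only the quasi-isometry inequalities. Fix $x\in X$ and put $y=\varphi(x)$. Evaluating at $y$ gives $d'(\varphi f\psi\varphi(x),\varphi(x))=d'(g(y),y)\le M$. Since $d(\psi\varphi(x),x)\le B$ and $f$ is, say, a $(\kappa,\nu)$-quasi-isometry, we get $d(f\psi\varphi(x),f(x))\le\kappa B+\nu$, and applying the upper inequality for $\varphi$ gives $d'(\varphi f\psi\varphi(x),\varphi f(x))\le\lambda(\kappa B+\nu)+\mu$. The triangle inequality then bounds $d'(\varphi f(x),\varphi(x))$ by a constant $M_1$ independent of $x$. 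Finally, the lower quasi-isometry inequality for $\varphi$, namely $\lambda^{-1}d(f(x),x)-\mu\le d'(\varphi f(x),\varphi(x))$, yields $d(f(x),x)\le\lambda(M_1+\mu)$, a uniform bound. Hence $\sup_{x\in X}d(x,f(x))<\infty$ and $X$ is boundary rigid.

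The only subtle points, and where I would be most careful, are the functorial properties of the boundary operator: that a quasi-isometry $X\to Y$ between geodesic hyperbolic spaces induces a boundary homeomorphism, that this assignment respects composition, and that uniformly close quasi-isometries induce the same homeomorphism. These facts are standard (\cite{BrHa,BuSc}) and are exactly what make the conjugation $[h]\mapsto[\varphi\circ h\circ\psi]$ a well-defined group isomorphism ${\rm QI}(X)\to{\rm QI}(Y)$ that intertwines the two boundary homomorphisms; granting them, the remainder is a routine chase through the quasi-isometry constants, and the same argument run with $\psi$ in place of $\varphi$ shows the invariance is genuinely two-sided.
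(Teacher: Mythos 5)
Your proposal is correct and follows essentially the same route as the paper: conjugate $f$ to $\widetilde f=\varphi\circ f\circ\psi$ on $Y$, check $\partial\widetilde f=\id_{\partial Y}$ by functoriality, invoke boundary rigidity of $Y$, and chase the quasi-isometry constants back to bound $d(x,f(x))$. The only (immaterial) difference is in the final estimate, where you evaluate at $y=\varphi(x)$ and use the bound on $d(\psi\varphi(x),x)$, while the paper picks $y$ with $d(x,\psi(y))\le\mu'$ from coarse surjectivity; both yield the same uniform bound.
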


\bpf Let $f:X\to X$ be a $(K,C)$-quasi-isometry with induced boundary map $\partial f=\id_{\partial X}$. Since $\varphi:X\to Y$ is a $(\lambda,\mu)$-quasi-isometry,  there is an inverse $\psi:Y\to X$ of $\varphi$ that is a $(\lambda', \mu')$-quasi-isometry, where $\lambda'$ and $\mu'$ depend only on $\lambda$ and $\mu$. Let
$$\widetilde{f}=\varphi\circ f\circ \psi:Y\to Y.$$
Then $\widetilde{f}$ is a quasi-isometry. Thus, $\widetilde{f}$ induces a boundary map
$$\partial \widetilde{f}=\partial(\varphi\circ f\circ \psi)=\partial\varphi\circ \partial f\circ \partial\psi=\partial\varphi\circ  \partial\psi=\id_{\partial Y}.$$

Since $Y$ is boundary rigid, there is an $M$ such that $d'(y, \widetilde{f}(y))\leq M$ for any $y\in Y$. Since $\psi$ is an inverse of $\varphi$, we have $d'(y,\varphi\circ\psi(y))\leq M'$ for some $M'$ independent of $y$. Since $\psi$ is a $(\lambda', \mu')$-quasi-isometry, for any $x\in X$, there exists some $y\in Y$ such that $d(x,\psi(y))\leq \mu'$.  Therefore, we have
\begin{eqnarray*} d(f(x),x)&\leq& d(f(x),f\circ\psi(y))+d(f\circ\psi(y),\psi(y))+d(x,\psi(y))
\\ \nonumber&\leq& K\mu'+C+\lambda d'(\varphi\circ f\circ\psi(y),\varphi\circ\psi(y))+\mu+\mu'
\\ \nonumber&\leq& K\mu'+C+\lambda (d'(\varphi\circ f\circ\psi(y), y)+d'(y, \varphi\circ\psi(y)))+\mu+\mu'
\\ \nonumber&\leq& K\mu'+C+\lambda (M+M')+\mu+\mu'.
\end{eqnarray*}
Hence $X$ is boundary rigid.
\epf

\bdefe\label{d-p}(\cite{MR}) Let $L\geq 0$. A proper geodesic space $X$ has an {\it $L$-pole} if there is  some point $o\in \bar X$ such that each point of $X$ lies in the $L$-neighborhood of some geodesic connecting $o$ to some point $v\in \partial X$. We call $o$ a pole of $X$. We say that $X$ has a {\it pole} if $X$ has an $L$-pole for some $L\geq 0$.
\edefe

\begin{rem}
The terms ``roughly starlike'' and ``quasi-pole" in the literatures \cite{BHK,Cao,MR} are used for spaces having a pole. In spaces where geodesics do not always exist, one can talk about similar properties using the concept of ``$L$-visual" (See \cite{BuSc}).
\end{rem}

The following lemma shows that $X$ has a pole in the space if and only if it has a pole in $\partial X$. Note that pole constants depend on the locations of poles in $X$.

\begin{lemma}\label{zh-0} $($\cite[Lemma 3.3]{Z}$)$ Let $X$ be a proper geodesic Gromov hyperbolic space whose boundary $\partial X$ contains at least two points. Then the following conditions are equivalent:
\begin{enumerate}
  \item\label{zh-1} $X$ has a pole $\xi\in \partial X$.
  \item\label{zh-2} Each point of $X$ is a pole.
  \item\label{zh-3} $X$ has a pole $w\in X$.
  \end{enumerate}
\end{lemma}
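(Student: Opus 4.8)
The plan is to establish the cycle $(1)\Rightarrow(2)\Rightarrow(3)\Rightarrow(1)$, the implication $(2)\Rightarrow(3)$ being immediate since if every point of $X$ is a pole then in particular some $w\in X$ is a pole. Throughout I will use two standard facts about a proper geodesic $\delta$-hyperbolic space $X$: for every $p\in X$ and every $\eta\in\partial X$ there is a geodesic ray $[p,\eta]$, and for every pair of distinct $\eta,\eta'\in\partial X$ there is a bi-infinite geodesic line $(\eta,\eta')$, both obtained as Arzel\`a--Ascoli limits of finite geodesics using properness. I will also use that geodesic triangles with vertices in $\bar X$, ideal ones included, are $\delta'$-thin for a constant $\delta'=\delta'(\delta)$ (the $\delta$-thin condition via the comparison map $\chi_\Delta$ extends to triangles with ideal vertices), so each side lies in the $\delta'$-neighborhood of the union of the other two.

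For $(1)\Rightarrow(2)$, let $\xi\in\partial X$ be an $L$-pole and fix any $p\in X$; I will show $p$ is an $(L+\delta')$-pole. Given $x\in X$, choose by hypothesis a geodesic $(\xi,v)$ with $v\in\partial X$ and a point $x'\in(\xi,v)$ with $d(x,x')\le L$. Applying $\delta'$-thinness to the triangle with vertices $p,\xi,v$, the side $(\xi,v)$ lies in the $\delta'$-neighborhood of $[p,\xi]\cup[p,v]$, so $x'$, and hence $x$ within $L+\delta'$, is close to one of the two $p$-to-boundary geodesics $[p,\xi]$, $[p,v]$. This proves $(1)\Rightarrow(2)$, and essentially the same triangle argument (now with one finite and one ideal vertex, plus the observation that $p$ lies on every ray issuing from $p$) would give the cleaner $(3)\Rightarrow(2)$ if one preferred that route.

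The heart of the matter, and the step I expect to be the main obstacle, is $(3)\Rightarrow(1)$: manufacturing a boundary pole out of an interior pole $w$. Here the hypothesis that $\partial X$ has at least two points enters, letting me fix a second boundary point $\zeta\ne\xi$ (for an arbitrary target $\xi\in\partial X$) together with a line $\sigma=(\xi,\zeta)$; set $R:=d(w,\sigma)$ and let $T$ be a bound of the form $(\xi|\zeta)_w+O(\delta)$. Given $x\in X$, take a pole-ray $[w,v]$ with $d(x,[w,v])\le L$ and a nearest point $x'\in[w,v]$. Thin-triangle $w,\xi,v$ shows $x'$ is within $\delta'$ either of the line $(\xi,v)$, in which case $x$ lies within $L+\delta'$ of a genuine $\xi$-to-boundary geodesic and we are done, or of the ray $[w,\xi]$, producing $y\in[w,\xi]$ with $d(x',y)\le\delta'$. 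In this second case I apply $\delta'$-thinness again, to the triangle $w,\xi,\zeta$: the side $[w,\xi]$ lies in the $\delta'$-neighborhood of $[w,\zeta]\cup\sigma$. If $y$ is $\delta'$-close to $\sigma$, then $x$ is uniformly close to the $\xi$-to-boundary line $\sigma$; if instead $y$ is $\delta'$-close to $[w,\zeta]$, then $y$ sits near the tripod centre of the two rays $[w,\xi]$, $[w,\zeta]$, forcing $d(w,y)\le T$, whence $d(x,w)\le L+\delta'+T$ and therefore $d(x,\sigma)\le L+\delta'+T+R$. In every case $x$ lies within a constant depending only on $L,\delta,R,T$ of some geodesic joining $\xi$ to a boundary point, so $\xi$ is a pole.

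The technical points requiring care are the existence and $\delta'$-thinness of ideal triangles, and the quantitative claim that two geodesic rays issuing from $w$ that come $\delta'$-close at a point $y$ must satisfy $d(w,y)\le(\xi|\zeta)_w+O(\delta)$; the latter is precisely the tripod comparison estimate for $\chi_\Delta$, since the initial fellow-travelling segment of the two rays has length $(\xi|\zeta)_w$. I note that the resulting pole constants depend on the locations of $w,\xi,\zeta$, consistent with the remark following the statement.
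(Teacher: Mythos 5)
Your proposal is correct. Note that the paper itself gives no proof of this lemma --- it is quoted from \cite[Lemma 3.3]{Z} --- so there is nothing internal to compare against; your cycle $(1)\Rightarrow(2)\Rightarrow(3)\Rightarrow(1)$ via thinness of ideal triangles (the paper's Lemma \ref{l-2}(1) already supplies the needed $3\delta$-thinness for triangles with vertices in $\bar X$) is the standard argument and all the steps check out, including the key estimate $d(w,y)\le(\xi|\zeta)_w+O(\delta)$ for a point $y\in[w,\xi]$ lying $\delta'$-close to $[w,\zeta]$, which follows from the Gromov product inequality together with $(\xi|y)_w=d(w,y)$ for $y$ on the ray $[w,\xi]$. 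The only unaddressed degenerate case is $v=\xi$ in the step $(3)\Rightarrow(1)$, where the triangle $w,\xi,v$ collapses, but there $x'$ already lies on (a ray uniformly close to) $[w,\xi]$ and you land directly in your second subcase, so this is harmless.
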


In the remaining part of this section, we assume that $(X,d)$ is a proper geodesic $\delta$-hyperbolic space. We cite some standard facts about Gromov hyperbolic spaces. See \cite{BM,BrHa,BuSc,Groves-Manning,Sh,Va05b} for proofs.

\begin{lemma}\label{l-0}
Let $o\in X$ and $x,y\in \bar X$. Then
$$\Big|d(o,[x,y])-(x|y)_o\Big|\leq 4\delta.$$
\end{lemma}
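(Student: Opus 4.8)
The plan is to reduce the whole estimate to the comparison tripod $T_\Delta$ of the geodesic triangle $\Delta=\Delta(o,x,y)$ together with the $\delta$-thin condition, proving a sharp bound first for $x,y\in X$ and then passing to the boundary by approximation.

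First I would treat the interior case $x,y\in X$. For the triangle $\Delta(o,x,y)$ the comparison tripod $T_\Delta=T(a,b,c)$ satisfies, in the notation of the excerpt, that the distance from $v_o$ to the center equals $a=(x|y)_o$, and the canonical map $\chi_\Delta:\Delta\to T_\Delta$ restricts to an isometry on each side. Let $p'\in[o,x]$ and $p\in[x,y]$ be the two points sent by $\chi_\Delta$ to the center. Then $d(o,p')=(x|y)_o$, and since $\chi_\Delta(p)=\chi_\Delta(p')$ the $\delta$-thin condition gives $d(p,p')\le\delta$, whence $d(o,[x,y])\le d(o,p)\le (x|y)_o+\delta$. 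For the reverse inequality take any $z\in[x,y]$; its image lies on the path from $v_x$ to $v_y$ through the center, so its distance to $v_o$ is at least $a=(x|y)_o$. Picking the partner $z'$ on $[o,x]$ or $[o,y]$ with $\chi_\Delta(z')=\chi_\Delta(z)$, we get $d(o,z')\ge (x|y)_o$ and $d(z,z')\le\delta$, hence $d(o,z)\ge (x|y)_o-\delta$. Taking the infimum over $z$ yields $|d(o,[x,y])-(x|y)_o|\le\delta$ in this case.

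Next I would extend to $x,y\in\bar X$, where $[x,y]$ may be a ray or a bi-infinite geodesic (these exist because $X$ is proper, via Arzel\`a--Ascoli). The idea is to approximate each ideal endpoint along the geodesic: if $y=\zeta\in\partial X$, choose $y_j$ on $[x,\zeta]$ with $y_j\to\zeta$, so the initial segments $[x,y_j]$ exhaust $[x,\zeta]$ and $d(o,[x,y_j])\to d(o,[x,\zeta])$. Applying the interior estimate to each $[x,y_j]$ and letting $j\to\infty$, it remains to compare $\liminf_j (x|y_j)_o$ with the boundary product $(x|\zeta)_o$. Since $(x|\zeta)_o$ is defined as an infimum of liminfs, one inequality $(x|\zeta)_o\le\liminf_j (x|y_j)_o$ is immediate, while the standard $\delta$-hyperbolic comparison of Gromov products computed along different representative sequences bounds the opposite discrepancy by a bounded multiple of $\delta$. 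Running the same approximation once more when both $x$ and $y$ lie on $\partial X$, the accumulated $\delta$-errors are absorbed into the stated constant $4\delta$.

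The main obstacle is the boundary case. One must check that the distance-to-geodesic passes correctly to the limit under the exhaustion, which requires controlling the Hausdorff convergence of $[x,y_j]$ to $[x,\zeta]$ and the non-uniqueness of geodesics with ideal endpoints; both are handled by $\delta$-hyperbolicity. One must also quantify the gap between the boundary Gromov product and the liminfs of its interior approximations. The interior tripod estimate is essentially sharp, with constant $\delta$; the looser constant $4\delta$ is precisely the cushion needed to absorb these two limiting discrepancies.
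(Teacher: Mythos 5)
The paper does not actually prove Lemma \ref{l-0}; it is listed among the ``standard facts'' cited from the references (V\"ais\"al\"a, Bridson--Haefliger, etc.), so there is no in-paper argument to compare against. Your proposal is a correct and essentially standard self-contained proof. The interior case is clean: with the paper's labelling $a=(x|y)_o$, the three preimages of the tripod center give $d(o,[x,y])\leq (x|y)_o+\delta$, and the fact that every point of $\chi_\Delta([x,y])$ lies at tripod-distance at least $a$ from $v_o$ gives the reverse bound, so you even obtain the sharper constant $\delta$ there. The passage to $\bar X$ by exhausting $[x,\zeta]$ with subsegments $[x,y_j]$ is also right, and is simpler than you suggest: since $[x,y_j]\subset[x,\zeta]$ there is no Hausdorff-convergence issue, only the monotone limit $d(o,[x,y_j])\downarrow d(o,[x,\zeta])$, and non-uniqueness of ideal geodesics is irrelevant because the argument applies to whichever geodesic $[x,y]$ is fixed in the statement. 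The one point you should make explicit is the constant bookkeeping in the boundary step: the inequality $\liminf_j(x|y_j)_o\leq(x|\zeta)_o+2\delta_0$ uses the four-point (Gromov product) constant $\delta_0$, which a priori differs from the thin-triangle constant; under the paper's convention of enlarging $\delta$ so that both roles are played by the same $\delta$ (it later invokes the four-point inequality on $\bar X$ with constant $4\delta$), your total error $\delta+2\delta=3\delta$ indeed fits inside the stated $4\delta$. With that caveat spelled out, the proof is complete.
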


\begin{lemma}\label{l-1}
Let $K\geq 1$ and $C\geq 0$. Let $x, y\in \bar X$. Let $\gamma$ and $\beta$ be two  $(K, C)$-quasi-geodesics connecting $x$ and $y$. Then $\gamma$ is in the $M$-neighborhood of $\beta$, where $M=M(\delta, K, C)$ is a constant depending only on $\delta$, $K$ and $C$.
\end{lemma}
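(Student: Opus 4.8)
The plan is to deduce the statement from the stability of quasi-geodesics (the Morse lemma) together with a single application of the triangle inequality. Since $X$ is proper and geodesic, there is a geodesic $\sigma$ (a segment, ray, or bi-infinite geodesic, according to whether $x,y$ lie in $X$ or in $\partial X$) joining $x$ and $y$. The Morse lemma provides a constant $R=R(\delta,K,C)$ such that every $(K,C)$-quasi-geodesic joining $x$ and $y$ lies in the $R$-neighborhood of $\sigma$; applying this to both $\gamma$ and $\beta$, each point of $\gamma$ is within $R$ of some point of $\sigma$, which in turn is within $R$ of some point of $\beta$. Hence $\gamma$ lies in the $2R$-neighborhood of $\beta$, and $M:=2R$ works.

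The substantive ingredient is thus the Morse lemma, which I would prove as follows for geodesic endpoints $x,y\in X$, so that $\sigma$ is a compact segment. After a routine taming step replacing $\gamma$ by a nearby continuous quasi-geodesic with comparable constants, set $D:=\sup_{p\in\sigma}d(p,\mathrm{im}\,\gamma)$ and let $x_0\in\sigma$ realize it. Choose $y_{\pm}\in\sigma$ at distance $2D$ from $x_0$ on the two sides (truncating at the endpoints if necessary) and points $y'_{\pm}\in\mathrm{im}\,\gamma$ within $D$ of $y_{\pm}$. Splicing the two short geodesics $[y_{\pm},y'_{\pm}]$ onto the subarc of $\gamma$ between $y'_{-}$ and $y'_{+}$ produces a continuous path $P$ from $y_{-}$ to $y_{+}$ whose length is linear in $D$, while every point of $P$ lies at distance $\geq D$ from $x_0$ by the choice of $D$. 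The key divergence estimate in a $\delta$-hyperbolic space---that the distance from a point on a geodesic to any continuous path of length $\ell$ joining its endpoints is at most $\delta\log_2\ell+1$, proved by repeated bisection using thinness of triangles---then gives $D\leq \delta\log_2(A+BD)+1$ for constants $A,B$ depending only on $K,C$, forcing $D\leq R_0(\delta,K,C)$. The reverse inclusion, that $\gamma$ stays in a bounded neighborhood of $\sigma$, follows from this by a short no-backtracking argument. All of this is classical, and I would simply cite \cite{BrHa}.

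To cover endpoints on $\partial X$ I would pass to a limit: if, say, $y\in\partial X$, apply the segment version to geodesics from $x$ to points $\gamma(t_n),\beta(t_n)$ escaping to $y$, obtaining uniform $R$-neighborhood control independent of $n$, and then use properness of $X$ to extract (via Arzel\`a--Ascoli) a limiting ray or line $\sigma$ from $x$ to $y$ lying in the $R$-neighborhood of both quasi-geodesics. The only real obstacle is bookkeeping: checking that $R$ does not degrade under the taming step or under this limiting passage. Since both are standard and the references \cite{BM,BrHa,BuSc,Groves-Manning,Sh,Va05b} record the precise constants, the lemma is quoted here as a known fact rather than reproved in detail.
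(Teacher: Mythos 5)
Your proposal is correct and follows exactly the route the paper itself takes: Lemma \ref{l-1} is stated there as one of the ``standard facts'' about stability of quasi-geodesics with endpoints in $\bar X$, cited to \cite{BM,BrHa,BuSc,Groves-Manning,Sh,Va05b} without proof, and your sketch (Morse lemma via the logarithmic divergence estimate, triangle inequality through a common geodesic, Arzel\`a--Ascoli for ideal endpoints) is precisely the classical argument recorded in those references. Nothing further is needed.
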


\begin{lemma}\label{l-2}
Let $\Delta$ be a geodesic triangle in $\bar X$.
\begin{enumerate}
\item Each edge of $\Delta$ is contained in the $3\delta$ neighborhood of the other two edges.
\item Suppose all vertices of $\Delta$ are in $\partial X$. Then there exist an infinitely tripod $T_{\Delta}$ and a map $f: \Delta\rightarrow T_{\Delta}$ satisfying:
\begin{enumerate}
\item $f$ is an isometry on each edge of $\Delta$.
\item $f^{-1}(a)$ has diameter at most $458\delta$ for any $a\in T_{\Delta}$.
\end{enumerate}
\end{enumerate}
\end{lemma}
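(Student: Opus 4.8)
The plan is to read part~(1) as the extension of the assumed $\delta$-thin (hence $\delta$-slim) condition from finite geodesic triangles to triangles whose vertices may lie in $\partial X$, and to read part~(2) as the explicit construction of an infinite comparison tripod by folding each edge at a coarse incenter. When all three vertices of $\Delta$ lie in $X$, part~(1) is immediate: the $\delta$-thin condition assumed throughout already forces each point of one edge to be within $\delta\le 3\delta$ of the union of the other two, since its image under the comparison map $\chi_\Delta$ lies on a leg shared with exactly one of the other two edges, and the matching point on that edge is $\delta$-close. Thus in both parts the genuine content is the passage to ideal vertices, and the recurring tool will be the fact that two geodesics of $\bar X$ with the same pair of endpoints (finite or ideal) stay within a bounded multiple of $\delta$ of each other.

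For part~(1) with ideal vertices I would argue by approximation. Writing $e_1,e_2,e_3$ for the three edges and fixing $u\in e_1$, I approximate each ideal vertex by finite points running out along the adjacent edges, obtaining finite geodesic triangles $\Delta_n$ whose edges, by stability (Lemma~\ref{l-1} with $K=1$, $C=0$), converge on compact sets to geodesics with the prescribed endpoints. Each $\Delta_n$ is $\delta$-slim, and slimness passes to the limit: a witness on one limiting edge within $\delta$ of the other two limiting edges is produced by extracting, using properness of $X$, a convergent subsequence of the approximating witnesses. It then remains to replace the limiting edges by the given edges $e_2,e_3$; since geodesics with a common pair of endpoints in $\bar X$ are uniformly close (the finite case from the thin condition applied to a degenerate bigon, the ideal case again by a limiting argument), the contribution from slimness and the contribution from this replacement combine to the asserted $3\delta$. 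Tracking these two contributions so that they sum to exactly $3\delta$ is the only delicate point here.

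For part~(2) assume $\Delta=\Delta(\xi,\zeta,\eta)$ has all vertices in $\partial X$, so its edges $e_1=[\xi,\zeta]$, $e_2=[\zeta,\eta]$, $e_3=[\xi,\eta]$ are bi-infinite geodesics which, being generically disjoint in $X$, let $f$ be defined independently on each edge. I first fix a coarse incenter: by part~(1) there is a point $m_1\in e_1$ lying within a bounded multiple of $\delta$ of both $e_2$ and $e_3$ at once (sliding along $e_1$ from the $\xi$-end to the $\zeta$-end, the nearest edge switches from $e_3$ to $e_2$, since near a shared ideal endpoint the two edges through it are close, so some transition point is close to both), and I let $m_2\in e_2$ and $m_3\in e_3$ be nearest points to $m_1$; the three internal points $m_1,m_2,m_3$ are then pairwise within a fixed multiple of $\delta$. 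I then take $T_\Delta$ to be three rays issuing from a center $O$ and define $f$ to send each edge $e_i$ by arc length onto the bi-infinite union of the two legs corresponding to its endpoints, with $f(m_i)=O$. By construction $f$ is an isometry on each edge, so condition~(a) is automatic.

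The remaining task, and the main obstacle, is the fiber bound in~(b). A fiber $f^{-1}(a)$ has at most three points: over $O$ it is $\{m_1,m_2,m_3\}$, whose diameter is the $O(\delta)$ spread of the internal points, while over an interior point of a leg it has two points, one on each of the two edges whose images contain that leg. For example over the point at distance $s$ from $O$ on the leg toward $\zeta$ these are the point at distance $s$ from $m_1$ along $e_1$ and the point at distance $s$ from $m_2$ along $e_2$, both running toward the common ideal endpoint $\zeta$. Two geodesic rays asymptotic to the same boundary point synchronize: with starting points within $O(\delta)$ their arclength parametrizations remain within $O(\delta)$ for every $s$, a quantitative fellow-traveling estimate obtained by feeding the thin condition into long thin triangles with one vertex pushed toward $\zeta$. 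Combining this with the $O(\delta)$ spread of $m_1,m_2,m_3$ bounds $\diam f^{-1}(a)$ uniformly in $a$; carrying out this accumulation of slimness constants coherently for all three legs simultaneously, and choosing the center so that no leg receives two incompatible parametrizations, is precisely the quantitative heart of the argument and is what yields the explicit constant $458\delta$.
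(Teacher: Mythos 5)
The paper does not actually prove this lemma: it simply cites \cite[Lemma 2.11]{Groves-Manning} for part (1) and \cite[Lemma 6.25]{Va05b} for part (2). Your outline follows the same standard route those references take --- approximation of ideal vertices by finite ones plus stability for (1), and a coarse incenter together with fellow-traveling of asymptotic rays for (2) --- so there is no divergence of method to report. The structural ideas (the connectedness argument locating a point of $e_1$ close to both $e_2$ and $e_3$, the description of the fibers as at most three synchronized points, the reduction of the fiber bound to synchronization of rays with a common ideal endpoint) are all correct and are essentially what the cited proofs do.

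The genuine gap is that this lemma is a purely quantitative statement, and in both parts you defer exactly the quantitative content. For (1), your plan ``$\delta$ from thinness plus the bigon replacement cost equals $3\delta$'' is the right accounting, but the replacement cost is the Hausdorff distance between two geodesics sharing a pair of \emph{ideal} endpoints, and your justification of it as ``the finite case applied to a degenerate bigon'' does not apply there --- an ideal bigon is not a degenerate finite triangle, and bounding it (by $2\delta$, which is what makes the total come out to $3\delta$) itself requires a separate limiting argument that you have not supplied. For (2), the constant $458\delta$ is not decorative: it is used verbatim downstream (in the proof of Lemma \ref{t-3}, where $458\delta$, $916\delta$ and $1374\delta$ appear), so ``some $O(\delta)$ bound obtained by accumulating slimness constants'' does not discharge the statement as written; you would either have to carry out the bookkeeping to that explicit constant or, as the authors do, cite V\"ais\"al\"a. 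A smaller soft spot: you wave at the edges being ``generically disjoint'' to define $f$ edge-by-edge, but edges of an ideal triangle can intersect in $X$, so one should say explicitly that $f$ is defined on each side separately (equivalently on the disjoint union of the sides), which is how the comparison map is set up earlier in Section \ref{sec-2}.
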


See \cite[Lemma 2.11]{Groves-Manning} for a proof of the first statement and \cite[Lemma 6.25]{Va05b} for a proof of the second statement.

The proof of the following easy lemmas are left to the readers. Note that the constants may not be optimal.

\begin{lemma}\label{l:right angle}
Let $[x, y]\subset \bar X$ be a geodesic and $o\in X$. If $u$ is a point on $[x, y]$ closest to $o$, then $u$ is in both of the $10\delta$-neighborhoods of $[o, x]$ and $[o, y]$.
\end{lemma}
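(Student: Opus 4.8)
The plan is to compare the nearest point $u$ with the coarse incenter (tripod centre) of the geodesic triangle $\Delta(o,x,y)$, and to exploit the fact that this centre is automatically close to all three sides. Concretely, I would form the triangle on $o,x,y$ and let $c$ be the point of $[x,y]$ that the comparison map $\chi_\Delta$ sends to the centre of $T_\Delta$; along $[x,y]$ this is the point with $d(x,c)=(o|y)_x$. Since the two other legs of $T_\Delta$ also pass through the centre, the $\delta$-thin condition produces a point of $[o,x]$ and a point of $[o,y]$ each lying within $\delta$ of $c$. Hence $c$ is within $\delta$ of both $[o,x]$ and $[o,y]$, and the whole problem reduces to showing that the nearest point $u$ is within $O(\delta)$ of $c$.

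Assume first that $x,y\in X$, so that $(o|y)_x$ and $(o|x)_y$ are finite and $c$ is a genuine point of the segment with $d(x,c)+d(c,y)=d(x,y)$. By Lemma~\ref{l-0}, $d(o,u)=d(o,[x,y])\le (x|y)_o+4\delta$. The triangle inequality then gives $d(x,u)\ge d(x,o)-d(o,u)\ge d(x,o)-(x|y)_o-4\delta=(o|y)_x-4\delta=d(x,c)-4\delta$, where I used the identity $(x|y)_o+(o|y)_x=d(o,x)$. Interchanging the roles of $x$ and $y$ yields $d(y,u)\ge d(y,c)-4\delta$. Because $u$ and $c$ both lie on $[x,y]$, these two lower bounds combine: $d(x,u)=d(x,c)+d(y,c)-d(y,u)\le d(x,c)+4\delta$, so $|d(x,u)-d(x,c)|\le 4\delta$ and therefore $d(u,c)\le 4\delta$. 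Together with the preceding paragraph this places $u$ within $5\delta\le 10\delta$ of each of $[o,x]$ and $[o,y]$.

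It remains to treat ideal endpoints, and this is the only real subtlety: when $x$ or $y\in\partial X$ the distance $d(x,o)$ is infinite and the centre $c$ is not defined directly. I would handle this by truncation and a limiting argument. Choose points $x_n,y_n\in[x,y]$ marching out towards the two ends of $[x,y]$ (taking $x_n=x$ if $x\in X$, and likewise for $y$), arranged so that $u\in[x_n,y_n]$ for all large $n$. Then $u$ is still the point of the finite subgeodesic $[x_n,y_n]$ nearest to $o$, so the case already settled supplies points of $[o,x_n]$ and of $[o,y_n]$ within $5\delta$ of $u$. These witnessing points lie in the compact ball $\overline{B}(u,5\delta)$, so by properness of $X$ one may extract a subsequential limit; using that the geodesics $[o,x_n]$ (resp.\ $[o,y_n]$) converge on compact sets to a geodesic joining $o$ to $x$ (resp.\ $o$ to $y$), and that any two geodesics with the same endpoints are uniformly Hausdorff close by Lemma~\ref{l-1}, the limiting witnesses bring $u$ within $10\delta$ of the prescribed geodesics $[o,x]$ and $[o,y]$. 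The slack between the $5\delta$ of the finite case and the stated $10\delta$ is exactly what absorbs this geodesic-comparison error. The main obstacle is thus precisely this passage to ideal endpoints; the finite case is the short estimate above.
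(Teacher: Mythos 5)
The paper does not actually supply a proof of this lemma (it is one of the ``easy lemmas left to the readers''), so there is nothing to compare against; judged on its own, your argument is correct. The finite case is clean: the identity $(x|y)_o+(o|y)_x=d(o,x)$ together with Lemma~\ref{l-0} pins $u$ to within $4\delta$ of the tripod-centre preimage $c$ on $[x,y]$, and $\delta$-thinness puts $c$ within $\delta$ of the other two sides, giving $5\delta$. For the ideal case your truncation-and-limit scheme works in a proper geodesic space; the only point to be careful about is the final comparison of the limit geodesic with an arbitrary prescribed geodesic $[o,x]$. Citing Lemma~\ref{l-1} here is slightly loose, since its constant $M(\delta,1,0)$ is not explicitly controlled and need not leave you under $10\delta$; but for two geodesics issuing from the same point $o\in X$ to the same (possibly ideal) endpoint, a direct thinness argument on the triangles $\Delta(o,\gamma(T),\gamma'(T))$ bounds their Hausdorff distance by $2\delta$, so the total is at most $7\delta\leq 10\delta$ and the stated constant is safe. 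With that small substitution the proof is complete.
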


\begin{lemma}\label{I:right angle 2}
Let $\gamma\subset X$ be a geodesic and $o\in X$. If $v$ is a point on $\gamma$ closest to $o$, then for any $u\in \gamma$, 
$d(o, u)\geq d(o, v)+d(v, u)-6\delta.$
\end{lemma}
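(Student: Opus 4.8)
The plan is to translate the claimed inequality into a bound on a Gromov product and then extract that bound from the thin-triangle condition together with the minimizing property of $v$. Recalling that $(o|u)_v=\tfrac12\big(d(v,o)+d(v,u)-d(o,u)\big)$, the asserted inequality $d(o,u)\geq d(o,v)+d(v,u)-6\delta$ is equivalent to $(o|u)_v\leq 3\delta$; in fact I expect to prove the sharper bound $(o|u)_v\leq\delta$, the constant $6\delta$ being a comfortable margin.

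First I would form the geodesic triangle $\Delta=\Delta(o,v,u)$, taking its side from $v$ to $u$ to be the subsegment of $\gamma$ between $v$ and $u$ (legitimate since $\gamma$ is a geodesic). Writing $T_\Delta=T(a,b,c)$ and $\chi_\Delta$ for the associated comparison map, I label the tripod data so that $a=(v|u)_o$, $b=(o|u)_v$ and $c=(o|v)_u$, so that $d(o,v)=a+b$, $d(o,u)=a+c$ and $d(v,u)=b+c$. The quantity to control is exactly $b$, the distance in $T_\Delta$ from the image of $v$ to the center. Since $b\leq a+b=d(o,v)$ and $b\leq b+c=d(v,u)$, there is a well-defined point $q$ on the side $[o,v]$ at distance $b$ from $v$ and a well-defined point $p$ on the side $[v,u]\subset\gamma$ at distance $b$ from $v$; both are sent by $\chi_\Delta$ to the center of the tripod.

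Next I would invoke $\delta$-thinness: since $\chi_\Delta(p)=\chi_\Delta(q)$, we get $d(p,q)\leq\delta$. The point $q$ lies on $[o,v]$ with $d(o,q)=d(o,v)-b=a$, while $p$ lies on $\gamma$, so the hypothesis that $v$ is a point of $\gamma$ closest to $o$ gives $d(o,p)\geq d(o,v)=a+b$. Combining these with the triangle inequality yields
$$a+b\leq d(o,p)\leq d(o,q)+d(q,p)\leq a+\delta,$$
whence $b\leq\delta$, i.e. $(o|u)_v\leq\delta$. Unwinding the definition of the Gromov product then gives $d(o,u)=d(o,v)+d(v,u)-2(o|u)_v\geq d(o,v)+d(v,u)-2\delta$, which is stronger than required.

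I do not anticipate a serious obstacle here: the only points needing care are the bookkeeping of the tripod parameters (making sure that the leg length measuring $(o|u)_v$ is the one called $b$, and that $q$ and $p$ genuinely lie on their respective sides, which follows from $b\leq\min\{d(o,v),d(v,u)\}$) and the observation that the side from $v$ to $u$ may be taken along $\gamma$, so that $p$ is a legitimate point of $\gamma$ to which the closest-point property applies. If one prefers to avoid the tripod formalism, the same estimate follows from the $\delta$-slim form of Lemma~\ref{l-2}(1) by a nearly identical comparison, at the cost of a slightly larger constant that remains well within $6\delta$.
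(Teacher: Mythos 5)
Your proof is correct. The paper itself leaves this lemma to the reader (with the remark that its constants may not be optimal), so there is no printed argument to compare against; your tripod computation is the standard one, the bookkeeping ($b=(o|u)_v$, $q$ and $p$ both mapping to the center, $d(o,q)=a$, and $d(o,p)\geq d(o,v)$ from the minimality of $v$) all checks out, and it in fact yields the sharper bound $d(o,u)\geq d(o,v)+d(v,u)-2\delta$, well within the stated $6\delta$.
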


\bdefe\label{d-c}
Let $K \geq1$, $C\geq0$ and $\rho\geq0$. A point $x\in X$ is  called a {\em $(K , C, \rho)$-quasi-centroid} of a triple of points $\{x_1,x_2, x_3\}\subset \bar X$ if there are $(K , C)$-quasi-geodesics  $[x_1, x_2]$, $[x_2, x_3]$ and $[x_1, x_3]$ such that $x$ is within $\rho$ of each of them.
Let $\Delta_{(K , C, \rho)}(\{x_1,x_2, x_3\})$ be
the set of all $(K, C, \rho)$-quasi-centroids of $\{x_1,x_2, x_3\}$.
\edefe

The following facts are parts of \cite[Lemma 3.1]{BM} although \cite{BM} only considered centroids of triples in $X$ instead of $\bar X$.

\begin{lemma}\label{bound of centroid}
Suppose $\rho\geq 3\delta$.
\begin{enumerate}
\item For any $x_1, x_2, x_3\in \bar X$,  the set $\Delta_{(K , C, \rho)}(\{x_1, x_2, x_3\})$ has non empty intersection with any geodesic between any two points in $\{x_1, x_2, x_3\}$. In particular $\Delta_{(K , C, \rho)}(\{x_1, x_2, x_3\})\neq \emptyset$.
\item The diameter of $\Delta_{(K , C, \rho)}(\{x_1, x_2, x_3\})$ is bounded above by a constant $D=D(K, C, \rho,\delta)$ depending on $K, C, \rho,\delta$.
\end{enumerate}
\end{lemma}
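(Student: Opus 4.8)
The plan is to treat the two assertions separately, and in both cases to begin by replacing the $(K,C)$-quasi-geodesics appearing in Definition \ref{d-c} with honest geodesics. A geodesic is a $(K,C)$-quasi-geodesic, and conversely, by Lemma \ref{l-1}, every $(K,C)$-quasi-geodesic joining two points of $\bar X$ lies in the $M$-neighborhood of a fixed geodesic joining them, where $M=M(\delta,K,C)$. Thus, on the one hand, any point lying within $3\delta\le\rho$ of three fixed geodesic edges is automatically a $(K,C,\rho)$-quasi-centroid; on the other hand, every $(K,C,\rho)$-quasi-centroid lies within $\rho':=\rho+M$ of each of the three fixed geodesic edges. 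This reduces everything to the thin-triangle geometry of a single geodesic triangle with sides $[x_1,x_2]$, $[x_2,x_3]$, $[x_1,x_3]$.

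For statement (1), let $\sigma$ be an arbitrary geodesic joining two of the points, say $x_1$ and $x_2$, and form the geodesic triangle with sides $\sigma$, $[x_1,x_3]$, $[x_2,x_3]$. By the $3\delta$-thin property (Lemma \ref{l-2}(1)), $\sigma$ is covered by the two relatively closed sets $A=\{p\in\sigma: d(p,[x_1,x_3])\le 3\delta\}$ and $B=\{p\in\sigma: d(p,[x_2,x_3])\le 3\delta\}$. Both are nonempty: if $x_1\in X$ then $x_1\in A$; and if $x_1\in\partial X$, then $A=\emptyset$ would force all of $\sigma$ into the $3\delta$-neighborhood of $[x_2,x_3]$, so that points $p_n\to x_1$ along $\sigma$ would be shadowed by points $w_n\in[x_2,x_3]$ with $d(p_n,w_n)\le 3\delta$, whence $w_n\to x_1$ and $x_1\in\overline{[x_2,x_3]}=[x_2,x_3]\cup\{x_2,x_3\}$, which is impossible since $x_1\in\partial X\setminus\{x_2,x_3\}$; symmetrically $B\neq\emptyset$. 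Since $\sigma$ is connected, $A\cap B\neq\emptyset$, and any point of $A\cap B$ lies on $\sigma$ within $3\delta\le\rho$ of the other two geodesics, hence is the desired quasi-centroid. Nonemptiness of $\Delta_{(K,C,\rho)}(\{x_1,x_2,x_3\})$ is the special case.

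For statement (2), it suffices, by the reduction above, to bound the diameter of the set $Z$ of points lying within $\rho'$ of all three fixed geodesic edges. Let $m_{12}$ denote the internal point of $[x_1,x_2]$ (the preimage of the center of the comparison tripod). I claim every $x\in Z$ satisfies $d(x,m_{12})\le 3\rho'+c\delta$ for a universal constant $c$, so that $\diam Z\le 6\rho'+2c\delta=:D(K,C,\rho,\delta)$, which depends only on $K,C,\rho,\delta$ as required. Indeed, choose $q\in[x_1,x_2]$ with $d(x,q)\le\rho'$; then $d(q,[x_2,x_3])\le 2\rho'$ and $d(q,[x_1,x_3])\le 2\rho'$. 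The heart of the matter is the estimate that a point of $[x_1,x_2]$ on the $x_1$-side of $m_{12}$ is far from $[x_2,x_3]$ and a point on the $x_2$-side is far from $[x_1,x_3]$; quantitatively, $d(q,[x_2,x_3])\ge d(q,m_{12})-c\delta$ and $d(q,[x_1,x_3])\ge d(q,m_{12})-c\delta$ respectively. In the case of finite vertices this follows by computing $(x_2|x_3)_q$ (resp.\ $(x_1|x_3)_q$) directly from the side lengths of the triangle and feeding it into Lemma \ref{l-0}; applied to $q$ it yields $d(q,m_{12})\le 2\rho'+c\delta$, whence $d(x,m_{12})\le 3\rho'+c\delta$.

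The main obstacle is the case of ideal vertices. Here the good news is that Lemma \ref{l-0} is already stated for endpoints in $\bar X$, so the reduction $d(q,[x_2,x_3])\ge (x_2|x_3)_q-4\delta$ survives unchanged; what becomes delicate is bounding the boundary Gromov product $(x_2|x_3)_q$ from below, since the finite side-length computation breaks down and the sharp $\delta$-thinness is replaced by the weaker $458\delta$ bound of Lemma \ref{l-2}(2). I would handle this by passing to the infinite comparison tripod $T_\Delta$ and its map $f\colon\Delta\to T_\Delta$: since $f$ is isometric on each edge and collapses fibers of diameter at most $458\delta$, one obtains $d(q,[x_2,x_3])\ge d_{T_\Delta}(f(q),c_0)-c'\delta=d(q,m_{12})-c'\delta$, where $c_0$ is the tripod center and $f(m_{12})=c_0$; this is the ideal analogue of the displayed estimate. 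Alternatively, one may exhaust the ideal triangle by finite triangles whose vertices march out toward the ideal points along fixed geodesics and pass to the limit, the finite-case bound being uniform precisely because it depends only on $\rho'$ and $\delta$. Either route requires care to keep every constant independent of the (possibly infinite) side lengths, and this bookkeeping, together with the coarse comparison estimate for the map to the tripod, is the only genuinely technical point of the argument.
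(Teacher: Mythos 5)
Your proposal is correct. For part (1) it coincides with the paper's argument: both reduce to a fixed geodesic triangle and locate, on each side, a point within $3\delta$ of the other two sides; the paper simply invokes Lemma \ref{l-2}(1) for the existence of such a point, while you supply the connectedness argument (covering the side by the two relatively closed sets $A$ and $B$) that makes it explicit, including the check that $A$ and $B$ are nonempty when a vertex is ideal --- a detail the paper leaves implicit. For part (2) the two proofs diverge after the common reduction via Lemma \ref{l-1} (every quasi-centroid lies within $\rho+M$ of each of the three fixed geodesic sides): the paper closes the argument by citing \cite[Lemma 11.87]{DK}, which asserts precisely that the set of points uniformly close to all three sides of an (ideal) triangle has diameter bounded in terms of that closeness and $\delta$, whereas you prove this fact directly, anchoring the set to the internal point $m_{12}$ via the estimate $d(q,[x_2,x_3])\ge d(q,m_{12})-c\delta$ on the $x_1$-side (and symmetrically), with the ideal case handled through the $458\delta$-tripod map of Lemma \ref{l-2}(2). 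Your tripod computation does go through: for $w\in[x_2,x_3]$ one pulls $f(w)$ back to a point of $[x_1,x_2]$ or $[x_1,x_3]$ in the same fiber, where $f$ restricts to an isometry, losing only $458\delta$. What your route buys is a self-contained proof with explicit constants, at the cost of the bookkeeping you yourself flag; the paper's citation keeps the lemma short but imports the key geometric fact as a black box.
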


\begin{proof}
Consider a geodesic triangle $[x_1,x_2]\cup[x_2,x_3]\cup[x_3,x_1]$.  By Lemma \ref{l-2}, there exists $w$ on $[x_1, x_2]$ such that $w$ is in the $3\delta$-neighborhood of $[x_2, x_3]$ and is also in the $3\delta$-neighborhood of $[x_1, x_3]$. Hence $w\in \Delta_{(K , C, \rho)}(\{x_1, x_2, x_3\})$ when $\rho\geq 3\delta$. Similarly, $\Delta_{(K , C, \rho)}(\{x_1, x_2, x_3\})$ also intersects $[x_2, x_3]$ and $[x_1, x_3]$ when $\rho\geq 3\delta$. Hence (1) follows.

We now prove (2). For all $x\in \Delta_{(K,C,\rho)}(\{x_1, x_2, x_3\})$, Lemma \ref{l-1} yields that $d(x,[x_i,x_j])\leq \rho+M$ for all $i\neq j\in \{1,2,3\}$, where $M$ depends only on $K$, $C$ and $\delta$. It follows from \cite[Lemma 11.87]{DK} that the collection of points within $\rho+M$ of each of the edges of an ideal triangle has diameter bounded by a constant depending only on $\rho+M$ and $\delta$.  Hence the diameter of $\Delta_{(K,C,\rho)}(\{x_1, x_2, x_3\})$ is also bounded by the same constant.
\end{proof}

\section{Proof of Theorem \ref{thm-1}}

The proof of Theorem \ref{thm-1} is divided into three propositions: Proposition \ref{l-b}, Proposition \ref{full implies perfect} and Proposition \ref{b-l}.

\begin{proposition}\label{l-b}
Suppose that $(X,d)$ is a proper geodesic $\delta$-hyperbolic space with a pole. If $\partial X$ is uniformly perfect, then $X$ is boundary rigid.
\end{proposition}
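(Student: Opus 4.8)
The plan is to reduce boundary rigidity to the quasi-centroid diameter bound of Lemma \ref{bound of centroid}(2). Fix a pole $o\in X$ (available by Lemma \ref{zh-0}) and use it as the base point for a visual metric $d_{o,\varepsilon}$ on $\partial X$, with respect to which $\partial X$ is $S$-uniformly perfect. Let $f\colon X\to X$ be a $(K,C)$-quasi-isometry with $\partial f=\id_{\partial X}$. The core observation is this: if $\{\xi_1,\xi_2,\xi_3\}\subset\partial X$ and $x\in X$ lies within $\rho_0$ of each geodesic $[\xi_i,\xi_j]$, then $f(x)$ lies within $\rho_0':=K\rho_0+C+M$ of each $[\xi_i,\xi_j]$ as well. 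Indeed, since $\partial f$ fixes each $\xi_i$, the image $f([\xi_i,\xi_j])$ is a $(K,C)$-quasi-geodesic with the same endpoints $\xi_i,\xi_j\in\partial X$, so by the stability Lemma \ref{l-1} it stays within $M=M(\delta,K,C)$ of $[\xi_i,\xi_j]$; pushing the $\rho_0$-close point of $x$ forward and applying the quasi-isometry inequality gives the claim. Thus $x$ and $f(x)$ both belong to $\Delta_{(1,0,\rho_0')}(\{\xi_1,\xi_2,\xi_3\})$, and (enlarging $\rho_0'$ so that $\rho_0'\geq 3\delta$) Lemma \ref{bound of centroid}(2) bounds $d(x,f(x))$ by a constant $D=D(1,0,\rho_0',\delta)$ independent of $x$. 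Everything therefore hinges on producing, for each $x$, such a triple of boundary points with a uniform $\rho_0$.

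The main step is the construction of $\{\xi_1,\xi_2,\xi_3\}$, carried out for $x$ outside a fixed ball $B(o,R_0)$. Using the $L$-pole at $o$, choose $\xi_1\in\partial X$ so that $x$ lies within $L$ of $[o,\xi_1]$; let $p$ be the nearest-point projection of $x$ onto $[o,\xi_1]$ and $t:=d(o,p)$, so that $t$ is large and $r:=e^{-\varepsilon t}\le r_0$. Applying $S$-uniform perfectness at $\xi_1$ at scale $r$ yields $\xi_2$ with $d_{o,\varepsilon}(\xi_1,\xi_2)\asymp r$, and a second application at $\xi_1$ at scale $\tfrac12 d_{o,\varepsilon}(\xi_1,\xi_2)$ yields $\xi_3$; the triangle inequality in $(\partial X,d_{o,\varepsilon})$ then forces all three visual distances $d_{o,\varepsilon}(\xi_i,\xi_j)$ into a fixed ratio of $r$ (and keeps the three points distinct). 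Translating visual distances into Gromov products via Lemma \ref{l: visual metric} gives $(\xi_i|\xi_j)_o=t+O(1)$ for every pair, with the error controlled only by $S,\varepsilon,\delta$. Consequently the three rays $[o,\xi_i]$ pairwise fellow-travel up to parameter $\approx t$ and then diverge, and by Lemma \ref{l-0} each geodesic $[\xi_i,\xi_j]$ has its point nearest $o$ at distance $t+O(1)$, lying within a uniform distance of $p$ and hence of $x$. This places $x$ within a uniform $\rho_0=\rho_0(\delta,L,S,\varepsilon)$ of each $[\xi_i,\xi_j]$, as required.

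It remains to treat $x\in B(o,R_0)$, where the scale $e^{-\varepsilon t}$ may exceed $r_0$ and the construction breaks down. Here I would bound the displacement of a single point. Since a uniformly perfect $\partial X$ is perfect, hence infinite, fix three distinct boundary points $\eta_1,\eta_2,\eta_3$ and, by Lemma \ref{bound of centroid}(1), a point $c_0\in\Delta_{(1,0,3\delta)}(\{\eta_1,\eta_2,\eta_3\})$. The argument of the first paragraph applies verbatim to $c_0$ and gives $d(c_0,f(c_0))\le D'$ for a fixed constant $D'$; then $d(o,f(o))\le (1+K)\,d(o,c_0)+C+D'=:B_0$ is a fixed constant. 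For $x\in B(o,R_0)$ the triangle inequality together with the quasi-isometry bound yields $d(x,f(x))\le (1+K)R_0+C+B_0$. Combining the two regimes gives $\sup_{x\in X}d(x,f(x))\le\max\{D,\,(1+K)R_0+C+B_0\}<\infty$, so $X$ is boundary rigid.

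The hard part is the construction of the second paragraph, and within it the genuinely delicate point is producing a third boundary point $\xi_3$ that branches away from \emph{both} $\xi_1$ and $\xi_2$ at the correct scale: the pole only supplies directions emanating from $o$, and a naive third ray could continue to fellow-travel $[o,\xi_1]$ well past $p$, producing a degenerate triangle whose center is far from $x$. Uniform perfectness is precisely what rules this out, and the two-fold application at $\xi_1$ (rather than one application at $\xi_1$ and one at $\xi_2$) is what guarantees, through the triangle inequality in the visual metric, that all three pairwise Gromov products are \emph{simultaneously} $t+O(1)$. Verifying that this forces the centroid to sit within a uniform distance of $x$, uniformly over all $x$ outside $B(o,R_0)$, is the technical heart of the proof.
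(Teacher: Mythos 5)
Your proposal is correct and follows essentially the same route as the paper: locate $x$ near a ray $[o,\xi_1]$ via the pole, use uniform perfectness of the visual metric to produce a boundary triple whose pairwise Gromov products are all $d(o,p)+O(1)$, so that $x$ and $f(x)$ both lie in a quasi-centroid set of that triple, and conclude with the diameter bound of Lemma \ref{bound of centroid}. The only (harmless) difference is bookkeeping: the paper builds a whole sequence $\{\xi_i\}$ at geometrically decreasing visual scales, whose associated projections form a chain along $[o,\xi]$ starting near $o$, which absorbs your separate treatment of the ball $B(o,R_0)$.
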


\bpf Let $K\geq 1$ and $C\geq 0$. Assume that $f:X\to X$ is a $(K,C)$-quasi-isometry with induced boundary map $\partial f=\id_{\partial X}$.
Let $o\in X$ be an $L$-pole of $X$. Let $x\in X$. Then $x$ is in the $L$-neighborhood of $[o, \xi]$ for some $\xi\in \partial X$.
By assumption, $(\partial X, d_{o, \epsilon})$ is uniformly perfect, where $d_{o, \epsilon}$ is the visual metric based at $o$ with some parameter $\epsilon$. Let $S$ and $r_0$ be the uniformly perfect constants. By uniform perfectness, there is a sequence of points $\{\xi_i\}_{i=1,\dots}\subset \partial X$ such that
\be\label{i e-1} r_0/S^i<d_{o, \epsilon}(\xi_i, \xi)\leq r_0/S^{i-1}.\ee

Hence we have

\be\label{i e-2} d_{o, \epsilon}(\xi_{i+2}, \xi_i)>\frac{r_0}{S^i}-\frac{r_0}{S^{i+1}}.\ee

By Lemma \ref{l: visual metric} and the general fact that $(x_1\mid x_2)_o\geq \min\{(x_1\mid x_3)_o, (x_3\mid x_2)_o\}-4\delta$ for all $x_1,x_2,x_3 \in \bar X$, (\ref{i e-1})  and  (\ref{i e-2}) imply:
\be\label{inequality es} (i-1)\frac{\ln{S}}{\epsilon}-\frac{\ln{2r_0}}{\epsilon} \leq (\xi_i\mid \xi)_o\leq i\frac{\ln{S}}{\epsilon}-\frac{\ln{r_0}}{\epsilon} \ee
and
\be\label{ine es -1} (i-1)\frac{\ln{S}}{\epsilon}-\frac{\ln{2r_0}}{\epsilon}-4\delta \leq (\xi_{i+2}\mid \xi_i)_o\leq (i+1)\frac{\ln{S}}{\epsilon}-\frac{\ln{r_0( S-1)}}{\epsilon}. \ee

Let $v_i$ be a point on $[\xi_i, \xi]$ closest to $o$. Lemma \ref{l-0} ensures that $$|d(o, v_i)-(\xi_i\mid \xi)_o|\leq 4\delta.$$ Let $v_{i, i+2}$ be a point on $[\xi_i, \xi_{i+2}]$ closest to $o$. By Lemma \ref{l-0}, it follows that $$|d(o, v_{i, i+2})-(\xi_{i+2}\mid \xi_i)_o|\leq 4\delta.$$  Hence we have:
\be\label{inequality es-1} (i-1)\frac{\ln{S}}{\epsilon}-\frac{\ln{2r_0}}{\epsilon}-4\delta \leq d(o, v_i)\leq i\frac{\ln{S}}{\epsilon}-\frac{\ln{r_0}}{\epsilon} +4\delta\ee
and
\be\label{ine es -1-1} (i-1)\frac{\ln{S}}{\epsilon}-\frac{\ln{2r_0}}{\epsilon}-8\delta \leq d(o, v_{i, i+2})\leq (i+1)\frac{\ln{S}}{\epsilon}-\frac{\ln{r_0(S-1)}}{\epsilon}+4\delta. \ee

By Lemma \ref{l:right angle}, both $v_{i+2}$ and $v_{i, i+2}$ are in the $10\delta$-neighborhood of $[o, \xi_{i+2}]$. Hence  (\ref{inequality es-1}) and (\ref{ine es -1-1}) imply that $d(v_{i+2}, v_{i, i+2})\leq D_1$, where $D_1$ depends on $S, r_0, \epsilon$ and $\delta$.

Let $u_i$ be a point on $[o, \xi]$ closest to $v_i$. Then  by Lemma \ref{l:right angle}, we have $d(u_i, v_i)\leq 10\delta$. Hence (\ref{inequality es-1}) implies that $d(u_{i}, u_{i+1})\leq D_2$ and $d(o, u_1)\leq D_2$, where $D_2$ depends on $S, r_0, \epsilon$ and $\delta$. Note that $\{u_i\}$ is a Gromov sequence with $\{u_i\}\in \xi$.

Let $u$ be a point on $[o, \xi]$ closest to $x$. Then $d(u, x)\leq L$. By the above properties of $\{u_i\}$, there is $u_i$ such that $d(u, u_i)\leq D_2$. Hence $d(u, v_i)\leq D_2+10\delta$, $d(u, v_{i+2})\leq 3D_2+10\delta$ and $d(u, v_{i, i+2})\leq 3D_2+10\delta+D_1$. It follows that $x$ is in the $D_3$-neighborhood of $[\xi_i, \xi]$, $[\xi_{i+2}, \xi]$ and $[\xi_i, \xi_{i+2}]$, where $D_3=3D_2+10\delta+D_1$ depends on $S, \epsilon, r_0, L$ and $\delta$. Hence we have
$$x\in \Delta_{(1,0,D_3)}(\{\xi_i,\xi_{i+2}, \xi\})\subset \Delta_{(K,C,KD_3+C)}(\{\xi_i,\xi_{i+2}, \xi\}).$$
Since  $f$ is a $(K,C)$-quasi-isometry with $f|_{\partial X}=\id_{\partial X}$,  we know that $f([\xi_i, \xi]),f([\xi_{i+2}, \xi])$ and $f([\xi_i,\xi_{i+2}])$ are $(K,C)$-quasi-geodesics, $f$ fixes $\{\xi_{i},\xi_{i+2},\xi\}$ point-wise and $f(x)$ is within $KD_3+C$ of $f([\xi_i, \xi]),f([\xi_{i+2}, \xi])$ and $f([\xi_i,\xi_{i+2}])$. This implies that
$$f(x)\in \Delta_{(K,C,KD_3+C)}(\{\xi_i,\xi_{i+2}, \xi\}).$$
Therefore, by Lemma \ref{bound of centroid}, we know that $d(x, f(x))$ is bounded above by a constant $D$ depending only on $K, C$ and $D_3$. 

\epf

Let $\Delta_{(K , C, \rho)}(\partial X)$
be the collection of all $(K , C, \rho)$-quasi-centroids of all triples in $\partial X$, i.e.,
$$\Delta_{(K , C, \rho)}(\partial X)= \bigcup_{\{\xi_1, \xi_2, \xi_3\}\subset \partial X}\Delta_{(K , C, \rho)}(\{\xi_1, \xi_2, \xi_3\})$$

\begin{proposition}\label{full implies perfect}
Suppose that $(X,d)$ is a proper geodesic $\delta$-hyperbolic space. If $\Delta_{(K , C, \rho)}(\partial X)$ is $M$-roughly full in $X$ for some $M,C,\rho\geq0$ and $K \geq1$, then $\partial X$ is uniformly perfect.
\end{proposition}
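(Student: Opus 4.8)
The plan is to convert rough fullness into a statement purely about Gromov products at a fixed basepoint, and then read off uniform perfectness from the visual metric estimate of Lemma \ref{l: visual metric}. Fix $o\in X$ and a visual parameter $\epsilon$, and for $\xi\in\partial X$ choose a geodesic ray $\gamma=[o,\xi]$ (available since $X$ is proper and geodesic). The reduction I aim for is the following \emph{annulus claim}: there is a constant $c=c(\delta,K,C,M,\rho)$ such that for every $\xi\in\partial X$ and every $t\ge c$ there exists $\zeta\in\partial X$ with $|(\xi\mid\zeta)_o-t|\le c$. Granting this, Lemma \ref{l: visual metric} gives $\tfrac12 e^{-\epsilon(t+c)}\le d_{o,\epsilon}(\xi,\zeta)\le e^{-\epsilon(t-c)}$, and choosing $t=-\tfrac1\epsilon\ln r+c$ for a prescribed $0<r\le r_0:=1$ produces $\zeta$ with $r/S<d_{o,\epsilon}(\xi,\zeta)\le r$ for $S=3e^{2\epsilon c}$; this is exactly $S$-uniform perfectness. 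I should also observe at the start that rough fullness of a nonempty $X$ forces $\partial X$ to contain at least three points, so that the triples, the visual metric, and the notion of uniform perfectness are all meaningful.

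To establish the annulus claim, fix $t\ge c$ and let $u$ be the point of $\gamma$ at distance $t$ from $o$. By rough fullness $u$ lies within $M$ of some quasi-centroid of a triple $\{\eta_1,\eta_2,\eta_3\}\subset\partial X$; comparing the defining $(K,C)$-quasi-geodesics with genuine geodesics via Lemma \ref{l-1} yields a constant $\rho'=M+\rho+M'(\delta,K,C)$ so that $u$ is within $\rho'$ of each geodesic $[\eta_i,\eta_j]$, while $d(o,u)=t$. Two consequences follow. First, since $u$ is within $\rho'$ of $[\eta_i,\eta_j]$, Lemma \ref{l-0} gives the \emph{upper} bound $(\eta_i\mid\eta_j)_o\le t+\rho'+4\delta$ for \emph{every} pair. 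Second, applying the thinness of the triangle $o\eta_i\eta_j$ (Lemma \ref{l-2}(1)) to the nearest point of $u$ on $[\eta_i,\eta_j]$ shows $u$ is within $\rho'+3\delta$ of $[o,\eta_i]$ or of $[o,\eta_j]$; because $u\in\gamma$ sits at height $t$ with $(\xi\mid u)_o=d(o,u)=t$, the fellow-traveling criterion (two applications of the inequality $(x_1\mid x_2)_o\ge\min\{(x_1\mid x_3)_o,(x_3\mid x_2)_o\}-4\delta$) forces $(\xi\mid\eta_i)_o\ge t-c_1$ or $(\xi\mid\eta_j)_o\ge t-c_1$ with $c_1=\rho'+O(\delta)$.

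Running this over all three sides and pigeonholing, at least two of the three products $(\xi\mid\eta_i)_o$ exceed $t-c_1$; say $(\xi\mid\eta_{i_0})_o,(\xi\mid\eta_{j_0})_o\ge t-c_1$. This is where the argument really bites, and it is the step I expect to be the main obstacle: to bound a Gromov product \emph{from above} I cannot take an arbitrary $\eta_i$, since a single direction near $u$ could be aligned with $\xi$ and have infinite Gromov product with it. Instead I pair the two nearly-$\xi$ directions through $(\eta_{i_0}\mid\eta_{j_0})_o\ge\min\{(\xi\mid\eta_{i_0})_o,(\xi\mid\eta_{j_0})_o\}-4\delta$ and feed in the side bound $(\eta_{i_0}\mid\eta_{j_0})_o\le t+\rho'+4\delta$ from the previous paragraph. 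The smaller of the two products, whose associated boundary point I call $\zeta$, then satisfies $t-c_1\le(\xi\mid\zeta)_o\le t+\rho'+8\delta$, i.e. $|(\xi\mid\zeta)_o-t|\le c$ for a suitable $c$. The remaining work is purely the bookkeeping of constants and the passage to the visual metric described in the first paragraph, so the conceptual content is entirely contained in the pigeonhole-plus-side-bound squeeze that pins $(\xi\mid\zeta)_o$ to $t$ from both sides.
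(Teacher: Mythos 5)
Your proof is correct, and while it rests on the same geometric mechanism as the paper's --- namely, that a quasi-centroid of a boundary triple lying near the point of $[o,\xi]$ at height $t$ forces some boundary direction to have Gromov product with $\xi$ comparable to $t$ --- the logical organization is genuinely different. The paper argues by contradiction: negating uniform perfectness produces, for arbitrarily large $S$, a long ``empty annulus'' $[t,s]$ on the ray $[p,\xi]$ in which no Gromov product $(\xi\mid\xi')_p$ lands; it places $u$ at the midpoint, invokes rough fullness, and then runs a case analysis (whether $\xi$ is a vertex of the triple, and on which side of the gap each $(\xi\mid\xi_i)_p$ falls) to show $u$ is too far from one side of the resulting ideal triangle. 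You instead prove the direct, quantitative ``annulus claim'' at every height $t$, extracting the boundary point $\zeta$ via the pigeonhole over the three sides together with the side bound $(\eta_i\mid\eta_j)_o\le t+\rho'+4\delta$ from Lemma \ref{l-0} and the four-point inequality $\min\{(\xi\mid\eta_{i_0})_o,(\xi\mid\eta_{j_0})_o\}\le(\eta_{i_0}\mid\eta_{j_0})_o+4\delta$. This buys an explicit uniform-perfectness constant $S=O(e^{2\epsilon c})$ with $c$ controlled by $\delta,K,C,\rho,M$, and it absorbs automatically the case $\xi\in\{\eta_1,\eta_2,\eta_3\}$ that the paper must treat separately (a vertex equal to $\xi$ simply has infinite product with $\xi$ and lands in your ``good'' set, while the squeeze is applied only to the smaller of the two large products, which your upper bound shows is finite). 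The only points needing care are routine: $(\xi\mid u)_o$ for $u\in[o,\xi]$ equals $d(o,u)$ only up to $O(\delta)$ because of the infimum over sequences in the definition, and the passage from the $(K,C)$-quasi-geodesic sides in Definition \ref{d-c} to genuine geodesics costs the Morse constant of Lemma \ref{l-1}; both are absorbed into $c$ exactly as you indicate.
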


\begin{proof} 
Suppose on the contrary that $(\partial X,d_{p,\varepsilon})$ is not uniformly perfect, where $d_{p,\varepsilon}$ is the visual metric based at $p$ with some parameter $\epsilon$. Then for any $S>1$, there exists $\xi\in\partial X$ with the following property: For any $\epsilon_0$, there exists $\epsilon<\epsilon_0$ such that there is no $\xi_0\in \partial X$ satisfying $\epsilon/S<d_{p,\varepsilon}(\xi, \xi_0)<\epsilon$. Hence there are $t$ and $s$ on the geodesic ray $[p, \xi]$ such that $t\in[p,s]$ and for any $\xi'\in \partial X$, we have \be\label{z-1.3} (\xi\mid \xi')_p\leq d(p, t)\;\;\;\;\mbox{or}\;\;\;\;(\xi\mid \xi')_p\geq d(p, s).\ee
Moreover $s$ and $t$ can be chosen so that $d(s, t)=\varphi(S)$ for some increasing function $\varphi$ of $S$ satisfying
$$\varphi(S)\rightarrow\infty\;\;\;\;\;\;\;\mbox{as}\; S\rightarrow \infty.$$

Let $u$ be the midpoint of a geodesic $[s,t]$. Since $\Delta_{(K , C, \rho)}(\partial X)$ is $M$-roughly full in $X$, there exist  $\{\xi_1, \xi_2, \xi_3\}\subset \partial X$ such that $u\in \Delta_{(K , C, \rho+M)}(\{\xi_1, \xi_2, \xi_3\})$. Now we consider two cases.

\textbf{Case a.} Suppose  $\xi\not\in \{\xi_1, \xi_2, \xi_3\}$.

Reindex $\{\xi_1, \xi_2, \xi_3\}$ if necessary, we see from (\ref{z-1.3}) that there are two subcases:
 \begin{enumerate}
\item $(\xi\mid \xi_1)_p\geq d(p, s)$ and $(\xi\mid \xi_2)_p\geq d(p, s)$;
\item $(\xi\mid \xi_1)_p\leq d(p, t)$ and $(\xi\mid \xi_2)_p\leq d(p, t)$.
\end{enumerate}

In the first subcase,
we have
$$(\xi_1\mid\xi_2)_p\geq \min\{(\xi\mid \xi_1)_p,(\xi\mid \xi_2)_p\}-2\delta\geq d(p, s)-2\delta.$$
Lemma \ref{l-0} yields
$$d(p, [\xi_1, \xi_2])\geq (\xi_1\mid\xi_2)_p-2\delta\geq d(p, s)-6\delta,$$
which implies that
$$d(u, [\xi_1, \xi_2])\geq d(u, s)-6\delta=\frac{1}{2}\varphi(S)-6\delta.$$
Hence we may choose $S$ big enough so that $d(u, [\xi_1, \xi_2])\geq \rho+M+1$. Contradiction.

We now consider the second subcase. Let $D=D(K, C, \rho, \delta)$ be the number given by Lemma \ref{bound of centroid}. We choose $S$ big enough so that $d(s, t)=\varphi(S)$ is much bigger than  $K, \rho, D$ and $M$. Without loss of generality, we assume $M>3\delta$.  As a result, $d(p, u)$ is much bigger than $d(p, t)$. Combining this with the fact that $(\xi\mid \xi_1)_p\leq d(p, t)$,  we know that $[t, s]$ is in the $3\delta$-neighborhood of $[\xi, \xi_1]$. Similarly, $[t, s]$ is in the $3\delta$-neighborhood of $[\xi, \xi_2]$.  This implies that in the $\frac{1}{2}\varphi(S)$-neighborhood of $u$, $[\xi, \xi_1]$ and $[\xi, \xi_2]$ stay in the $3\delta$-neighborhood of each other. Hence $u$ is at least $\frac{1}{2}\varphi(S)-3\delta-D-M$ away from $[\xi_1, \xi_2]$. This contradicts the fact that $u$ is in the $M$-neighborhood of $[\xi_1, \xi_2]$ as $\frac{1}{2}\varphi(S)-3\delta-D-M$ is much bigger than $M$.

\textbf{Case b.} Suppose $\xi \in \{\xi_1, \xi_2, \xi_3\}$.

Without loss of generality, we may assume $\xi=\xi_3$. By the arguments in \textbf{Case a}, we only need to consider the subcase:
$$(\xi\mid \xi_1)_p\leq d(p, t)\;\;\;\;\mbox{and}\;\;\;\;(\xi\mid \xi_2)_p\geq d(p, s).$$
Since $(\xi\mid \xi_2)_p\geq d(p, s)$ and $\frac{1}{2}\varphi(S)$ is much bigger than $8\delta$, we easily find that
$$d(u,[\xi,\xi_2])>\frac{1}{2}\varphi(S)-8\delta.$$
Hence we can choose $S$ big enough so that $d(u, [\xi_1, \xi_2])\geq \rho+M+1$. Contradicting with $u\in \Delta_{(K , C, \rho+M)}(\{\xi_1, \xi_2, \xi\})$.
\end{proof}

The aim of the rest of this section is to prove the following proposition, which, together with Proposition \ref{full implies perfect} and Proposition \ref{l-b}, completes the proof of Theorem \ref{thm-1}.

\begin{proposition}\label{b-l}
Let $K\geq 1$, $C\geq0$ and $\rho>3\delta$. Let $(X,d)$ be a proper geodesic $\delta$-hyperbolic space with a pole. If $X$ is boundary rigid, then
$\Delta_{(K , C, \rho)}(\partial X)$ is roughly full in $X$.
\end{proposition}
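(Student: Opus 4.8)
The plan is to prove the contrapositive: assuming $\Delta_{(K,C,\rho)}(\partial X)$ is not roughly full, I will build a \emph{single} self-quasi-isometry $f\colon X\to X$ with $\partial f=\id_{\partial X}$ whose displacement $\sup_{x}d(x,f(x))$ is infinite, contradicting boundary rigidity. The degenerate cases $\#\partial X\le 2$, where $X$ is coarsely a point, a ray, or a line, are checked directly, so I assume $\partial X$ has at least three points; then by Lemma \ref{bound of centroid} the centroid set is nonempty and the $1$-Lipschitz function $y\mapsto d(y,\Delta_{(K,C,\rho)}(\partial X))$ is finite everywhere but, by hypothesis, unbounded on $X$. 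Choosing $x_n\in X$ with $d(x_n,\Delta_{(K,C,\rho)}(\partial X))\ge n$, continuity forces these to leave every bounded set, so after passing to a subsequence I may assume the $x_n$ are pairwise very far apart.

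The geometric heart is an \emph{isolated tube lemma}. By Lemma \ref{zh-0} I may take a pole $\omega\in\partial X$, so each $x_n$ lies within the pole constant $L$ of a bi-infinite geodesic $g_n=[\omega,\eta_n]$ with $\eta_n\in\partial X$; let $p_n$ be a nearest point to $x_n$ on $g_n$. I claim the subsegment $\sigma_n\subset g_n$ of length $\asymp n$ centered at $p_n$ is an isolated bottleneck: there is a uniform $R=R(\delta,K,C,\rho,L)$ such that every $y\in X$ whose nearest-point projection to $g_n$ lands in the middle of $\sigma_n$ satisfies $d(y,g_n)\le R$. Indeed, if some such $y$ were far from $g_n$, then, $\omega$ being a pole, $y$ would lie near a geodesic $[\omega,v]$ that is forced to branch off $g_n$ inside the window; the center of the ideal triangle $\{\omega,\eta_n,v\}$ is then within $3\delta$ of all three sides (Lemma \ref{l-2}) and lies on $g_n$ near the branch point (Lemma \ref{l-0}), hence is a $(1,0,3\delta)$-quasi-centroid, a fortiori a $(K,C,\rho)$-quasi-centroid since $\rho>3\delta$. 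As it would sit within $\asymp n$ of $x_n$, this contradicts the choice of $x_n$ for $n$ large. Using $\omega\in\partial X$ is essential: since both ends of $g_n$ are boundary points, a single transversal geodesic already supplies the third prong needed for a boundary-triple centroid, avoiding the degenerate ``branch toward the basepoint'' configuration that would arise with a pole in $X$.

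With the tubes in hand I construct $f$ by stretching each $\sigma_n$. Fix $K_0=3/2$ and reparametrize $g_n\cong\mathbb{R}$ by a $K_0$-bi-Lipschitz homeomorphism $\phi_n$ that fixes $\omega$ and $\eta_n$, equals the identity outside $\sigma_n$ and on a buffer of width $R$ near each end of $\sigma_n$, and displaces $p_n$ by $\asymp n/3$; such $\phi_n$ exist precisely because this is a bounded-ratio bijection of an interval of length $\asymp n$ fixing its endpoints. I extend $\phi_n$ to the tube $B(g_n,R')$ by the coarse nearest-point projection to $g_n$ (carry-along) and set $f=\id$ off the tubes. On each tube $f$ distorts distances only multiplicatively, by $K_0$; the buffers make the transition to the identity seamless at the ends of $\sigma_n$; and the bottleneck property guarantees no points are laterally adjacent to the stretched middle of $\sigma_n$, so no large jumps occur. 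Because the tubes are pairwise far apart, these local estimates assemble into uniform quasi-isometry constants $(K',C')$ independent of $n$, while $\phi_n$ fixing $\omega,\eta_n$ and being the identity near the ends forces $\partial f=\id_{\partial X}$. Finally $d(x_n,f(x_n))\ge \tfrac{1}{3}n-O(R)\to\infty$, so $f$ has infinite displacement and $X$ is not boundary rigid.

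The main obstacle is the isolated tube lemma together with the verification that the carry-along stretch has constants independent of $n$; the delicate points are (i) extracting a uniform bottleneck radius $R$ from the single hypothesis ``far from all boundary-triple quasi-centroids'', for which the boundary pole is the key device, and (ii) checking that moving the center of a long isolated tube by a large amount distorts distances only by the fixed bi-Lipschitz factor rather than additively—this is exactly what lets the per-tube stretches combine into one map of unbounded displacement.
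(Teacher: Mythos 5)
Your proposal is correct and follows essentially the same route as the paper: locate points arbitrarily far from the quasi-centroid set, use a pole at infinity to thread long geodesic segments through them, prove an isolated-tube/bottleneck lemma (the paper's Lemma \ref{sequence}(4) together with Lemma \ref{t-3}, where the branch point of a transversal geodesic $[\omega,v]$ would be a forbidden boundary-triple centroid), stretch each segment by a fixed bi-Lipschitz factor extended via nearest-point projection, and compose the pairwise-distant local stretches into one quasi-isometry fixing $\partial X$ with unbounded displacement. The only work left implicit in your sketch is the case-by-case verification of uniform quasi-isometry constants for the stretched map, which is exactly the content of the paper's Lemma \ref{map}.
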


\bdefe
Let $[y, z]$ be a geodesic in $\bar X$. When $x\in X$, the {\em projection} of $x$ onto $[y, z]$, denoted by ${\rm proj}_{[y,z]}(x)$, is the set of all points on $[y,z]$ closest to $x$. When $x\in \partial X$, ${\rm proj}_{[y,z]}(x)$ is the set of $(1, 0, 3\delta)$-quasi-centroids of $\{x, y, z\}$.
\edefe

By Lemma \ref{l:right angle} and Definition \ref{bound of centroid} we have

\begin{lemma}\label{projection-centroid}
${\rm proj}_{[y,z]}(x)\subset \Delta_{(1, 0, 10\delta)}(\{x,y,z\})$.
\end{lemma}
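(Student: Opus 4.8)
The plan is to read off the containment directly from the definition of ${\rm proj}_{[y,z]}(x)$, splitting into the two cases according to whether $x\in X$ or $x\in\partial X$. In each case the conclusion is essentially immediate: the interior case is exactly Lemma \ref{l:right angle}, and the boundary case is just the monotonicity of the quasi-centroid sets $\Delta_{(K,C,\rho)}$ in the parameter $\rho$. So the work is really just a careful matching of variables and a comparison $3\delta\leq 10\delta$.

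First I would handle the case $x\in X$. Here, by definition, a point $u\in{\rm proj}_{[y,z]}(x)$ is a point of $[y,z]$ closest to $x$, so trivially $d(u,[y,z])=0\leq 10\delta$. For the remaining two sides I would invoke Lemma \ref{l:right angle} with external point $x$ and geodesic $[y,z]$: identifying the lemma's external point $o$ with $x$ and its geodesic endpoints with $y$ and $z$, the conclusion of that lemma states that $u$ lies in the $10\delta$-neighborhoods of both $[x,y]$ and $[x,z]$. Since $[x,y]$, $[y,z]$, $[x,z]$ are geodesics, i.e. $(1,0)$-quasi-geodesics, these three estimates say precisely that $u$ is within $10\delta$ of a $(1,0)$-quasi-geodesic joining each pair among $\{x,y,z\}$, which is exactly the assertion that $u\in\Delta_{(1,0,10\delta)}(\{x,y,z\})$.

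Next I would handle the case $x\in\partial X$. By definition, ${\rm proj}_{[y,z]}(x)$ is the set $\Delta_{(1,0,3\delta)}(\{x,y,z\})$ of $(1,0,3\delta)$-quasi-centroids of $\{x,y,z\}$. Since any point within $3\delta$ of a given $(1,0)$-quasi-geodesic is a fortiori within $10\delta$ of it, the quasi-centroid set is non-decreasing in $\rho$, whence $\Delta_{(1,0,3\delta)}(\{x,y,z\})\subseteq \Delta_{(1,0,10\delta)}(\{x,y,z\})$. This gives the claim in the boundary case.

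I do not expect any genuine obstacle here: both cases collapse to a direct reading of the definitions together with a single application of Lemma \ref{l:right angle}. The only points demanding a little care are the correct identification of variables in that application—namely that the lemma's external point plays the role of $x$ while the relevant geodesic is $[y,z]$, so that the lemma's neighborhoods of $[o,x]$ and $[o,y]$ become the neighborhoods of $[x,y]$ and $[x,z]$—and the elementary observation that enlarging $\rho$ enlarges $\Delta_{(K,C,\rho)}$.
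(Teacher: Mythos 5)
Your proof is correct and matches the paper's own (very terse) justification: the paper likewise derives the interior case from Lemma \ref{l:right angle} applied with external point $x$ and geodesic $[y,z]$, and the boundary case directly from the definition of ${\rm proj}_{[y,z]}(x)$ as $\Delta_{(1,0,3\delta)}(\{x,y,z\})$ together with monotonicity of $\Delta_{(K,C,\rho)}$ in $\rho$. Your variable identifications and the observation that geodesics are $(1,0)$-quasi-geodesics are exactly the details the paper leaves implicit.
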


Let $D_0$ be the bound on the diameter of $\Delta_{(1,0,10\delta)}(\{\xi_1,\xi_2, \xi_3\})$ given by Lemma \ref{bound of centroid}. Fix $D={\rm max}\{D_0, 10000\delta\}$ for the rest of this section.

\begin{lemma}\label{sequence}
Fix $K\geq 1$, $C\geq 0$ and $\rho>6\delta$. Let $o$ be an $L$-pole of $\bar X$. Suppose $\Delta_{(K , C, \rho)}(\partial X)$ is not $M$-roughly full in $X$ for any $M>0$. Then there exists a sequence of geodesic segments $\gamma_i=[y_i, z_i]$ in $X$ with the following properties:
\begin{enumerate}
\item Each $\gamma_i$ is a subsegment of a geodesic $[o, \xi_i]$ for some $\xi_i\in \partial X$;
\item Elements of $\{\gamma_i\}$ are pairwise disjoint with $d(\gamma_i,\gamma_j)\geq 10L+10\delta+1$ for $i\neq j$;
\item $100D\leq d(y_i,z_i)\to\infty$ as $i\to\infty$;
\item ${\rm proj}_{\gamma_i}(\partial X)=\cup_{\xi\in \partial X}{\rm proj}_{\gamma_i}(\xi)$ is contained in the $D$-neighborhood of $\{y_i, z_i\}$.

\end{enumerate}
\end{lemma}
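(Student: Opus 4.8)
The plan is to build each $\gamma_i$ around a point of $X$ lying very far from the quasi-centroid set $\Delta:=\Delta_{(K,C,\rho)}(\partial X)$, and to extract all four conclusions from this single feature. As a preliminary reduction I would invoke Lemma \ref{zh-0} to replace the pole by a boundary pole, so that (after enlarging $L$) I may assume $o\in\partial X$; this is important because then every $[o,\xi]$ is bi-infinite, and triples $\{\xi,o,\eta\}$ with $\xi,\eta\in\partial X$ are genuine boundary triples whose quasi-centroids lie in $\Delta$. I also assume, as I may, that $\partial X$ has at least three points, so that $\Delta\neq\emptyset$ by Lemma \ref{bound of centroid}(1).

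Since $\Delta$ is not $M$-roughly full for any $M$, I can pick points $p_n\in X$ with $d(p_n,\Delta)>n$. Because $o$ is an $L$-pole, each $p_n$ lies within $L$ of some geodesic $\sigma_n=[o,\xi_{(n)}]$; letting $p_n'$ be a closest point of $\sigma_n$ to $p_n$ gives $d(p_n',\Delta)>n-L$. I would then take $\gamma_n$ to be the subsegment of $\sigma_n$ centered at $p_n'$ of half-length $h_n:=(n-L)/4$, with endpoints $y_n,z_n$, so that $d(y_n,z_n)=2h_n$. Every point of $\gamma_n$ is then at distance $>3(n-L)/4$ from $\Delta$; in particular $h_n\to\infty$, so after discarding small indices so that $2h_n\geq 100D$ we obtain (3), and the whole of $\gamma_n$ avoids $\Delta$. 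Property (1) holds by construction with $\xi_i:=\xi_{(n_i)}$.

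For (2) I would use a local finiteness property: if a fixed bounded ball $B(c,R)$ met infinitely many $\gamma_n$, then $c$ would be within $R$ of points at distance $>3(n-L)/4\to\infty$ from $\Delta$, forcing $d(c,\Delta)=\infty$, which is absurd since $\Delta\neq\emptyset$. Hence only finitely many $\gamma_n$ meet any bounded set, and a greedy extraction (at each stage avoiding the bounded neighborhood of the finitely many segments already chosen) produces a subsequence, re-indexed as $\{\gamma_i\}$, with $d(\gamma_i,\gamma_j)\geq 10L+10\delta+1$ for $i\neq j$; along this subsequence the lengths still tend to infinity, so (2) and (3) hold together.

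The heart of the matter is (4), and this is the step I expect to be the main obstacle. Fix $\xi\in\partial X$ and $q\in{\rm proj}_{\gamma_i}(\xi)$, so by definition $q$ is within $3\delta$ of each of $[\xi,y_i]$, $[\xi,z_i]$ and $\gamma_i=[y_i,z_i]\subset\sigma_i$. Suppose for contradiction that $d(q,y_i)>D$ and $d(q,z_i)>D$. The idea is to convert the mixed triple $\{\xi,y_i,z_i\}$ into the boundary triple $\{\xi,o,\xi_i\}$. Applying the $3\delta$-thinness of Lemma \ref{l-2}(1) to the triangle $\{\xi,z_i,\xi_i\}$, whose side $[z_i,\xi_i]$ runs along $\sigma_i$ and so stays at distance $>D$ from $q$, the point of $[\xi,z_i]$ near $q$ cannot be close to $[z_i,\xi_i]$, hence is close to $[\xi,\xi_i]$; thus $q$ lies within $6\delta$ of $[\xi,\xi_i]$. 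The symmetric argument with the triangle $\{\xi,y_i,o\}$ places $q$ within $6\delta$ of $[\xi,o]$, while $q$ is already within $3\delta$ of $[o,\xi_i]$. Since $\rho>6\delta$ and geodesics are $(K,C)$-quasi-geodesics, this gives $q\in\Delta_{(1,0,6\delta)}(\{\xi,o,\xi_i\})\subset\Delta_{(K,C,\rho)}(\partial X)=\Delta$, contradicting $d(q,\Delta)>0$. Therefore $d(q,\{y_i,z_i\})\leq D$ for every such $q$, which is exactly (4). The delicate points — and the reason the constraints $\rho>6\delta$ and $D\geq 10000\delta$ are imposed — are the bookkeeping of the additive $\delta$-errors and the verification that the degenerate triples ($\xi=o$ or $\xi=\xi_i$) never yield an interior projection, which I would handle by noting that in those cases $[\xi,y_i]$ or $[\xi,z_i]$ already lies along $\sigma_i$ and so remains more than $D$ away from any interior point $q$.
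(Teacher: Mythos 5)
Your proposal is correct and follows essentially the same route as the paper: both build the segments $\gamma_i$ along pole geodesics through points (respectively, balls) escaping every bounded neighborhood of $\Delta_{(K,C,\rho)}(\partial X)$, reduce to a pole $o\in\partial X$ via Lemma \ref{zh-0}, and derive (4) from the fact that the quasi-centroid set of the ideal triple $\{o,\xi,\xi_i\}$ lies in $\Delta_{(K,C,\rho)}(\partial X)$ and therefore misses $\gamma_i$. The only cosmetic difference is in (4): the paper locates the branch point of $[o,\xi]$ and $[o,\xi_i]$ beyond an endpoint of $\gamma_i$ and then applies the diameter bound of Lemma \ref{bound of centroid}, whereas you argue by contradiction that an interior projection point would itself be a $(1,0,6\delta)$-quasi-centroid of $\{o,\xi,\xi_i\}$.
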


\begin{proof} The construction of $\gamma_i=[y_i, z_i]$ is as follows.  Since $\Delta_{(K , C, \rho)}(\partial X)$ is not $M$-roughly full in $X$ for any $M>0$, there are closed balls $B_i=B(x_i,r_i)$ with radius $r_i\rightarrow \infty$ such that $B_i$ is disjoint from the set $\Delta_{(K , C, \rho)}(\partial X)$ for all $i\in \mathbb{N}_+$. Without loss of generality we may assume that $d(B_i, B_j)\geq 10L+10\delta+1$ for any $i\neq j$ and $r_i\geq 100D+L$.

Since $o$ is an $L$-pole of $\bar X$, for each $i$ there is a geodesic  $[o,\xi_i]$ connecting $o$ to $\xi_i\in \partial X$ such that
$$d(x_i,y_i)\leq L,$$
for some $y_i\in[o,\xi_i]$. Let $z_i$ a point in $[y_i,\xi_i]\cap \partial B_i$. Then we have
$$d(y_i,z_i)\geq r_i-L\to\infty, \;\;\;\mbox{as}\;i\to\infty$$
and $\gamma_i=[y_i,z_i]\subset B_i$.

Now we check that $\gamma_i$ satisfy the desired properties. Note that $(1)$ $(2)$ and $(3)$ follow directly from construction.

For $(4)$, fix $i\in \mathbb{N}_+$. For any $\xi\in\partial X$, if $\xi=o$ or $\xi_i$, then (4) holds because $\gamma_i\subset[o,\xi_i]$. Assume that $\xi\notin\{o,\xi_i\}$. By Lemma \ref{l-2}, there is a point $w\in [o,\xi_i]$ such that
$$\max\{d(w,[o,\xi]),d(w,[\xi,\xi_i])\}\leq 3\delta.$$
Hence $w\in \rm{proj}_{[o,\xi_i]}(\xi).$

By Lemma \ref{zh-0}, we may assume that $o\in \partial X$. Then $\rm{proj}_{[o,\xi_i]}(\xi)\subset \Delta_{(K , C, \rho)}(\partial X)$ and so $\gamma_i\cap \rm{proj}_{[o,\xi_i]}(\xi)=\emptyset$. Therefore, we have $w\not\in\gamma_i$. Without loss of generality we may assume that $\gamma_i\subset[o,w]$ and $y_i\in[o,z_i]$. Hence $y_i$ and $z_i$ are in the $3\delta$-neighborhood of $[o, \xi]$. It follows that $[y_i, \xi]$ is in the $3\delta$-neighborhood of $[o, \xi]$.  Therefore $z_i$ is in the $6\delta$-neighborhood of $[y_i, \xi]$. As a result we have
$$z_i\in \Delta_{(1,0,6\delta)}(\{\xi,y_i,z_i\}).$$
This, together with Lemma \ref{bound of centroid}, tells us that  $\Delta_{(1,0,6\delta)}(\{\xi,y_i,z_i\})$ is contained in the $D$-neighborhood of $z_i$. Since
$$\rm{proj}_{[y_i,z_i]}(\xi)=\Delta_{(1,0,3\delta)}(\{\xi,y_i,z_i\})\subset \Delta_{(1,0,6\delta)}(\{\xi,y_i,z_i\}),$$
(4) follows.
\end{proof}

Let $o\in \partial X$ be an $L$-pole of $X$ and let $[o, \xi]$ be a geodesic with $\xi\in \partial X$.  Let $y, z$ be points on $[o, \xi]$ such that $y\in [o,z]$ and $d(y, z)>100D$. Let $\gamma=[y, z]$ be a subarc of $[o, \xi]$. Suppose ${\rm proj}_{\gamma}(\partial X)$ is contained in the $D$-neighborhood of $\{y, z\}$.

Let $R\geq D$. Set
\be\label{t-4} X_y(R)=\{x\in X\mid\; {\rm proj}_{\gamma}(x)\subset B(y,R)\}\ee
and similarly,
$$X_z(R)=\{x\in X\mid\; {\rm proj}_{\gamma}(x)\subset B(z,R)\}.$$

In the following, we prove some technique lemmas about the above sets and then use these results to construct the desired quasi-isometries with increasing displacements.

\begin{lemma}\label{t-1}
$X_y(3D)\cap X_z(3D)=\emptyset$.
\end{lemma}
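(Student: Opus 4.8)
The plan is to argue by contradiction, exploiting the fact that the hypothesis $d(y,z)>100D$ forces the two balls $B(y,3D)$ and $B(z,3D)$ to be disjoint and indeed far apart. Suppose, for the sake of contradiction, that some point $x$ lies in $X_y(3D)\cap X_z(3D)$. By the definition of these sets, this means precisely that the projection set ${\rm proj}_{\gamma}(x)$ of $x$ onto $\gamma=[y,z]$ is contained in $B(y,3D)$ and, simultaneously, in $B(z,3D)$.

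The one preliminary observation that really needs to be made is that ${\rm proj}_{\gamma}(x)$ is nonempty. Indeed, $\gamma=[y,z]$ is a compact geodesic segment (it is isometric to a closed bounded interval), and $X$ is proper, so the continuous function $p\mapsto d(x,p)$ attains its minimum on $\gamma$; hence at least one closest point exists. I want to stress that this nonemptiness is exactly what makes the statement have content: if the projection were permitted to be empty, then $x$ would vacuously satisfy both containments and belong to both sets, so the lemma would fail. This is the only place where properness of $X$ enters.

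With nonemptiness in hand, the conclusion is immediate. Picking any $p\in {\rm proj}_{\gamma}(x)$, the two containments give $d(y,p)\leq 3D$ and $d(z,p)\leq 3D$, so the triangle inequality yields $d(y,z)\leq d(y,p)+d(p,z)\leq 6D$, contradicting $d(y,z)>100D$. Hence no such $x$ exists and $X_y(3D)\cap X_z(3D)=\emptyset$. There is no genuine obstacle here: the estimate is a single application of the triangle inequality, and the whole subtlety is the routine-but-essential remark that the closest-point set is nonempty because $X$ is proper and $\gamma$ is compact.
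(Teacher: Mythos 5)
Your proof is correct and follows essentially the same route as the paper's: argue by contradiction and apply the triangle inequality against the hypothesis $d(y,z)>100D$. In fact your version is marginally cleaner, since you observe that the entire (nonempty, by compactness of $\gamma$) set ${\rm proj}_{\gamma}(x)$ lies in both balls and so a single point gives $d(y,z)\leq 6D$, whereas the paper picks two points of the projection set and additionally invokes the diameter bound $d(p_y,p_z)\leq D$ from Lemma \ref{bound of centroid} to get $d(y,z)\leq 9D$.
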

\bpf  Suppose on the contrary that there is some point $p\in X_y(3D)\cap X_z(3D)$. By the definitions of $X_y(3D)$ and $X_z(3D)$, there are $p_y$ and $p_z$ in ${\rm proj}_{[y,z]}(p)$ such that $d(y,p_y)\leq 3D$ and $d(z,p_z)\leq 3D$. Lemma \ref{bound of centroid} gives that $d(p_y,p_z)\leq D$. Thus
$$100D\leq d(y,z)\leq d(y,p_y)+d(p_y,p_z)+d(z,p_z)\leq 9D, $$
which is impossible.
\epf

\begin{lemma}\label{t-2} Let $w\not\in X_y(2D)\cup X_z(2D)$. Let $u$ be a point on $[y,z]$ closest to $w$. Then
 $u$ is also a point on $[o,\xi]$ closest to $w$.
\end{lemma}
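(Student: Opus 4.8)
The plan is to show that the closest-point projection of $w$ onto the subsegment $\gamma=[y,z]$ and onto the whole geodesic line $[o,\xi]$ agree, by first placing $u$ well inside $\gamma$, away from both endpoints, and then arguing that a closest point of $w$ on $[o,\xi]$ cannot escape $\gamma$.

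First I would record that the projection set is thin. By Lemma \ref{projection-centroid} we have ${\rm proj}_{[y,z]}(w)\subset \Delta_{(1,0,10\delta)}(\{w,y,z\})$, so by Lemma \ref{bound of centroid} its diameter is at most $D_0\leq D$. The hypothesis $w\notin X_y(2D)$ means ${\rm proj}_\gamma(w)$ is not contained in $B(y,2D)$, so some projection point lies at distance more than $2D$ from $y$; since the whole set has diameter at most $D$, every point of ${\rm proj}_\gamma(w)$, in particular the chosen $u$, satisfies $d(y,u)>D$. Applying the same reasoning to $w\notin X_z(2D)$ gives $d(z,u)>D$. Thus $u$ sits deep inside $\gamma$, at distance exceeding $D\geq 10000\delta$ from each endpoint.

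Next I would let $u'$ be a point of $[o,\xi]$ closest to $w$ and show that $u'\in[y,z]$. Since $o,y,z,\xi$ occur in this order along $[o,\xi]$, the only alternatives are $u'\in[o,y]$ or $u'\in[z,\xi]$; I treat the first, the second being symmetric. Applying Lemma \ref{I:right angle 2} to the geodesic $[o,\xi]$, the point $w$, and its closest point $u'$, we get $d(w,q)\geq d(w,u')+d(u',q)-6\delta$ for every $q\in[o,\xi]$. For $q\in[y,z]$ colinearity gives $d(u',q)=d(u',y)+d(y,q)$, so $d(w,q)\geq d(w,u')+d(u',y)+d(y,q)-6\delta$; taking $q=u$ gives $d(w,u)\geq d(w,u')+d(u',y)+d(y,u)-6\delta$. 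On the other hand $y\in\gamma$ yields $d(w,u)\leq d(w,y)\leq d(w,u')+d(u',y)$. Combining these forces $d(y,u)\leq 6\delta$, contradicting $d(y,u)>D\geq 10000\delta$. Hence $u'\in[y,z]$.

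Finally, once $u'\in[y,z]\subset[o,\xi]$, the two minimizations coincide: as $u$ is closest on $\gamma$ we have $d(w,u)\leq d(w,u')$, while as $u'$ is closest on $[o,\xi]\supset\gamma$ we have $d(w,u')\leq d(w,u)$; therefore $d(w,u)=d(w,u')=d(w,[o,\xi])$, so $u$ is indeed a closest point of $w$ on $[o,\xi]$. The one step requiring genuine care is confining $u'$ to $\gamma$, which is where the right-angle estimate of Lemma \ref{I:right angle 2} does the work; it is essential that $u$ was first shown to be more than $6\delta$ (in fact more than $D$) from each endpoint of $\gamma$, since that gap is exactly what must absorb the $6\delta$ error in the estimate.
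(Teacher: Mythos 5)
Your proof is correct and follows essentially the same route as the paper: both arguments use the diameter bound on ${\rm proj}_\gamma(w)$ from Lemmas \ref{projection-centroid} and \ref{bound of centroid} together with the estimate of Lemma \ref{I:right angle 2} to force $d(y,u)\leq 6\delta$ (or $d(z,u)\leq 6\delta$) whenever the closest point on $[o,\xi]$ escapes $[y,z]$, contradicting $w\not\in X_y(2D)\cup X_z(2D)$. The only difference is presentational — you unpack the hypothesis into $d(y,u)>D$ first and spell out the easy case $u'\in[y,z]$, which the paper leaves implicit.
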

\bpf Let $v$ be a point on $[o,\xi]$ closest to $w$. Suppose $v\not\in [y,z]$, otherwise the lemma follows easily. Without loss of generality we may assume that $v\in [o,y]$.  By Lemma \ref{I:right angle 2}, we have
\begin{eqnarray*}d(w,u) &\geq& d(w,v)+d(v,u)-6\delta
\\ \nonumber&=& d(w,v)+d(v, y)+d(y,u)-6\delta
\\ \nonumber&\geq&  d(w,y)+d(y,u)-6\delta.
\end{eqnarray*}
Note that $d(w, u)\leq d(w,y)$. Hence $d(y,u)\leq 6\delta\leq D$. Then Lemma \ref{bound of centroid} yields that $w\in X_y(2D)$, contradiction.
\epf

\begin{lemma}\label{t-3}
If $x\not\in X_y(3D)\cup X_z(3D)$, then $d(x,[y,z])\leq L+3\delta$.
\end{lemma}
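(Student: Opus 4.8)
The plan is to reduce the statement to a single comparison of Gromov products based at the pole $o$. Write $u$ for a closest point of $\gamma=[y,z]$ to $x$. First I would unpack the hypothesis $x\notin X_y(3D)\cup X_z(3D)$: by Lemma \ref{projection-centroid} and Lemma \ref{bound of centroid} the set ${\rm proj}_\gamma(x)$ has diameter at most $D$, and since $x\notin X_y(3D)$ some point of it lies outside $B(y,3D)$; hence every point of ${\rm proj}_\gamma(x)$, in particular $u$, satisfies $d(u,y)>2D$, and symmetrically $d(u,z)>2D$. Because $B(y,2D)\subset B(y,3D)$ we also have $x\notin X_y(2D)\cup X_z(2D)$, so Lemma \ref{t-2} shows that $u$ is at the same time a closest point of $[o,\xi]$ to $x$. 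Consequently $d(x,[y,z])=d(x,[o,\xi])=d(x,u)$, and it is enough to bound $d(x,u)$.

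Next I would invoke the pole. By Definition \ref{d-p}, together with Lemma \ref{zh-0} which lets me take $o\in\partial X$, there are $\eta\in\partial X$, a geodesic $[o,\eta]$, and a point $x'$ on it with $d(x,x')\le L$. Let $c$ be a point of $[o,\xi]$ lying in ${\rm proj}_{[o,\xi]}(\eta)=\Delta_{(1,0,3\delta)}(\{o,\xi,\eta\})$; Lemma \ref{l-0} and Lemma \ref{l:right angle} give $d(o,c)=(\xi\mid\eta)_o+O(\delta)$, $d(o,u)=(x\mid\xi)_o+O(\delta)$, and $d(o,x')=(x\mid\eta)_o+O(\delta)$, while by Lemma \ref{l-2}(1) the sub-ray $[o,c]$ of $[o,\eta]$ stays in the $3\delta$-neighborhood of $[o,\xi]$. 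Now apply the four-point inequality $(x\mid\xi)_o\ge\min\{(x\mid\eta)_o,(\xi\mid\eta)_o\}-4\delta$. If the minimum is $(x\mid\eta)_o$, then $d(o,x')\le d(o,c)+O(\delta)$, so $x'$ lies in the fellow-travelling part $[o,c]$ and $d(x',[o,\xi])\le 3\delta$; hence $d(x,u)=d(x,[o,\xi])\le L+3\delta$, which is the desired bound.

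It remains to exclude the case where the minimum is $(\xi\mid\eta)_o$. Here the complementary inequality $(\xi\mid\eta)_o\ge\min\{(x\mid\eta)_o,(x\mid\xi)_o\}-4\delta$ forces $(x\mid\xi)_o=(\xi\mid\eta)_o+O(\delta)$, and therefore $d(u,c)=|d(o,u)-d(o,c)|=O(\delta)$, since $u$ and $c$ both lie on the ray $[o,\xi]$; crucially this estimate is independent of $L$. As $d(u,c)=O(\delta)\le D$ while $d(u,y)>2D$ and $d(u,z)>2D$, the point $c$ lies in the interior of $\gamma$ with $d(c,\{y,z\})>2D-O(\delta)>D$ (using $D\ge 10000\delta$). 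But $c\in\gamma$ and $c\in{\rm proj}_{[o,\xi]}(\eta)$ give $c\in{\rm proj}_\gamma(\eta)$, so the boundary point $\eta$ projects more than $D$ from $\{y,z\}$, contradicting the hypothesis that ${\rm proj}_\gamma(\partial X)$ lies in the $D$-neighborhood of $\{y,z\}$. Hence this case is impossible and $d(x,[y,z])\le L+3\delta$ holds in all cases.

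The step I expect to be the main obstacle is the passage from $x$ to $\eta$ in the second case: one must ensure that the error in $(x\mid\xi)_o=(\xi\mid\eta)_o+O(\delta)$, and hence in $d(u,c)$, depends only on $\delta$ and not on the a priori unbounded displacement $L$. A naive argument that transports the bound $d(x,x')\le L$ through nearest-point projection would leak an additive $L$ into $d(u,c)$ and leave the margin $2D$ unable to beat $D$; the point of routing the whole comparison through the Gromov products at $o$, and converting them back into heights along $[o,\xi]$ via Lemma \ref{l-0} and Lemma \ref{l:right angle}, is exactly to keep that error at $O(\delta)$. Verifying these height identities and the fellow-travelling claim with honest constants, so that the accumulated $O(\delta)$ is genuinely below $D$, is the only place where real care is needed.
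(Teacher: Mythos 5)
Your overall strategy is actually the same as the paper's: the pole gives a geodesic $[o,\eta]$ passing within $L$ of $x$, and one argues that the point of $[o,\eta]$ near $x$ must lie on the side of the branch point of the ideal triangle $(o,\xi,\eta)$ where $[o,\eta]$ fellow-travels $[o,\xi]$, since otherwise that branch point would be a projection of a boundary point onto $\gamma$ lying deep inside $\gamma$, contradicting the hypothesis on ${\rm proj}_{\gamma}(\partial X)$. The paper executes this with closest-point projections, Bonk's projection lemma and the tripod map of Lemma \ref{l-2}; you execute it with ``heights'' $d(o,\cdot)$ and Gromov products based at $o$. The problem is that in the standing hypotheses of Lemma \ref{t-3} the pole $o$ lies in $\partial X$ (and you explicitly invoke Lemma \ref{zh-0} to put it there), so every quantity your argument rests on is undefined or infinite: $d(o,u)$, $d(o,c)$, $d(o,x')$ are all $+\infty$, the products $(x\mid\xi)_o$, $(x\mid\eta)_o$, $(\xi\mid\eta)_o$ are only defined in the paper for basepoints in $X$, and Lemma \ref{l-0}, Lemma \ref{l:right angle} and the four-point inequality are all stated for $o\in X$. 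To make the height comparison rigorous you would have to replace $d(o,\cdot)$ by a Busemann function and re-derive all three height identities and the four-point inequality in that setting, none of which is available here. This is the central gap.

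There are two further, quantitative problems. First, in Case 2 you assert that $c\in\gamma$ and $c\in{\rm proj}_{[o,\xi]}(\eta)=\Delta_{(1,0,3\delta)}(\{o,\xi,\eta\})$ imply $c\in{\rm proj}_{\gamma}(\eta)=\Delta_{(1,0,3\delta)}(\{\eta,y,z\})$; this does not follow with the same parameter $3\delta$. Thinness only yields $c\in\Delta_{(1,0,6\delta)}(\{\eta,y,z\})$, and comparing $c$ to an honest point of ${\rm proj}_{\gamma}(\eta)$ via the diameter bound $D_0\le D$ of Lemma \ref{bound of centroid} erodes your margin from $2D-O(\delta)$ down to $D-O(\delta)$, which no longer contradicts the hypothesis that ${\rm proj}_{\gamma}(\eta)$ lies in the $D$-neighborhood of $\{y,z\}$; one must instead show directly that two points of $\gamma$ that are each $O(\delta)$-close to both $[y,\eta]$ and $[z,\eta]$ are within $O(\delta)$ of each other. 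Second, even after these repairs the accumulated $O(\delta)$ errors in your height identities produce a bound of the form $L+C\delta$ with $C$ substantially larger than $3$, not the stated $L+3\delta$; this would be harmless for the later applications of the lemma, but it means the statement as written is not what your argument proves. By contrast, the paper's route through Bonk's projection lemma and the explicit $458\delta$ tripod constant of Lemma \ref{l-2} delivers the clean $d(p,p')\le 3\delta$ at the end.
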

\bpf
Since $o\in \partial X$ is an $L$-pole of $X$, there is $\xi'\in\partial X$ such that
$$d(x,p')\leq L,$$
where $p'$ is a point on $[o,\xi']$ closest to $x$. Let $\bar x$ and $p$ be points on $[y,z]$ closest to $x$ and $p'$, respectively. Suppose that $d(x,[y,z])=d(x,\bar x)>2\delta+L$, otherwise the desired estimate is obvious. Then we have
$$d(p',[y,z])\geq d(x,[y,z])-d(x,p')>2\delta.$$
Using \cite[Projection Lemma 3.2]{Bo} with $R=2\delta+L$, we obtain
$$d(\bar x,p)\leq 8\delta\leq D.$$
The assumption $x\not\in X_y(3D)\cup X_z(3D)$ implies that $p'\not\in X_y(2D)\cup X_z(2D)$. Hence by Lemma \ref{t-2}, we know that $\bar x$ and $p$ are points on $[o,\xi]$ closest to $x$ and $p'$, respectively.

By Lemma \ref{l-2}, it follows that there are  $q\in \rm{proj}_{[o,\xi]}(\xi')\cap [o,\xi]$ and $q'\in \rm{proj}_{[o,\xi']}(\xi)\cap [o,\xi']$ with $d(q,q')\leq 458\delta$. Then there are two cases at our hands: $p'\in[q',\xi']$ and $p'\in [o,q']$. Indeed, we shall show that $p'\in[q',\xi']$ does not occur.

Assume that $p'\in[q',\xi']$. To get a contradiction, we first establish the following estimate
\be\label{z-1.4} d(p,q)\leq 1374\delta.\ee
If $p\in [q,\xi]$, then Lemma \ref{l-2} ensures that there are $p_0,q_0,p_0'\in [\xi,\xi']$ with $d(p,q)=d(p_0,q_0)$, $d(p',q')=d(p_0',q_0)$ and
$$\max\{d(p,p_0),d(q,q_0),d(q',q_0),d(p',p_0')\}\leq 458\delta.$$
Thus by the triangle inequality, we have
$$d(p,p')\geq d(p_0,p_0')-916\delta\geq d(p,q)+d(p',q)-1374\delta\geq d(p,q)+d(p',p)-1374\delta,$$
which implies (\ref{z-1.4}).

If $p\in [q,o]$, again by  Lemma \ref{l-2}, there is $v\in [o,q']$ with $d(p,v)\leq 458\delta$ and $d(p,q)=d(v,q')$. This guarantees that
$$d(p,p')\geq d(p',v)-458\delta\geq d(p',q)+d(p,q)-916\delta \geq d(p',p)+d(p,q)-916\delta,$$
and thus, (\ref{z-1.4}) holds.

Note that $d(y,z)>100D$ and $p\in[y,z]$ with $d(p,\{y,z\})>2D$. Now (\ref{z-1.4}) implies that $q\in [y,z]$ and so $q\in {\rm Proj}_{[y,z]}(\xi').$ Since ${\rm proj}_{\gamma}(\partial X)$ is contained in the $D$-neighborhood of $\{y, z\}$, one observes that
$$d(q,\{y,z\})<D.$$
On the other hand, we obtain
$$d(q,\{y,z\})\geq d(p,\{y,z\})-d(p,q)>2D-1374\delta>D,$$
contradiction.

Therefore, we know that $p'\in [o,q']$. Since $p$ is the closest point on $[o,\xi]$ to $p'$, Lemma \ref{l-2} ensures that $d(p,p')\leq 3\delta$. Hence
$$d(x,[y,z])\leq d(x,p)\leq d(x,p')+d(p,p')\leq L+3\delta.$$
\epf

\begin{lemma}\label{t} Let $x\not\in X_y(10D)\cup X_z(10D)$. Then we have the following:

\begin{enumerate}
  \item For all $x'\in X_y(3D)$, $d(y,[x,x'])\leq 5D$.
  \item For all $x''\in X_z(3D)$, $d(z,[x,x''])\leq 5D$.
\end{enumerate}
\end{lemma}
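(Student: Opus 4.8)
The plan is to reduce the statement to an upper bound on the Gromov product $(x\mid x')_y$ via Lemma \ref{l-0} (applied with basepoint $y$, so that $d(y,[x,x'])\le(x\mid x')_y+4\delta$), and then to establish a ``no shortcut'' staircase estimate for $d(x,x')$ using the four point condition together with the right angle Lemma \ref{I:right angle 2}. I only treat $(1)$; part $(2)$ is identical after interchanging the roles of $y$ and $z$ (equivalently of the two ideal endpoints $o$ and $\xi$ of the line $[o,\xi]$), since nothing in the argument distinguishes them.

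First I would fix the projections. Let $\bar x$ be a point of $[o,\xi]$ closest to $x$. Since $x\notin X_y(10D)\cup X_z(10D)$ we have $x\notin X_y(2D)\cup X_z(2D)$, so Lemma \ref{t-2} gives that $\bar x$ is also a closest point of $\gamma=[y,z]$ to $x$; moreover, because ${\rm proj}_{\gamma}(x)\subset\Delta_{(1,0,10\delta)}(\{x,y,z\})$ (Lemma \ref{projection-centroid}) has diameter at most $D$, the hypothesis $x\notin X_y(10D)\cup X_z(10D)$ forces $d(\bar x,y)>9D$ and $d(\bar x,z)>9D$; in particular $y\in[o,\bar x]$. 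Next let $\tilde x'$ be a point of $[o,\xi]$ closest to $x'$. Applying Lemma \ref{I:right angle 2} to compare distances from $x'$ to points of $\gamma$, the condition $x'\in X_y(3D)$ together with $d(y,z)>100D$ rules out $\tilde x'\in(z,\xi]$; hence either (\textbf{Case A}) $\tilde x'\in\gamma$ with $d(\tilde x',y)\le 3D$, or (\textbf{Case B}) $\tilde x'\in[o,y]$, in which case $y\in[\tilde x',\bar x]$.

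The key step, and the main obstacle, is the staircase inequality
\[
d(x,x')\ \ge\ d(x,\bar x)+d(\bar x,\tilde x')+d(\tilde x',x')-E,\qquad E=O(\delta).
\]
To prove it I would use Lemma \ref{I:right angle 2} twice. With nearest point $\bar x$ and $u=\tilde x'$ it yields $d(x,\tilde x')\ge d(x,\bar x)+B-6\delta$, and with nearest point $\tilde x'$ and $u=\bar x$ it yields $d(x',\bar x)\ge d(x',\tilde x')+B-6\delta$, where $B=d(\bar x,\tilde x')$; note $B>6D$ in both cases. Now I feed these into the four point inequality for $\{x,x',\bar x,\tilde x'\}$: writing $S_1=d(x,x')+B$, $S_2=d(x,\bar x)+d(x',\tilde x')$ and $S_3=d(x,\tilde x')+d(x',\bar x)$, the two estimates give $S_3\ge S_2+2B-O(\delta)$, so $S_3$ is strictly the largest of the three sums; since $S_2$ lies well below $S_3$, the second largest must be $S_1$, and $S_3-S_1\le 2\delta$, which rearranges to the displayed inequality. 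The content here is genuinely geometric: because the middle segment $[\bar x,\tilde x']$ is long and both $x$ and $x'$ hang off $[o,\xi]$ almost perpendicularly, a geodesic $[x,x']$ cannot shortcut past it.

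Finally I would combine the staircase inequality with the elementary bounds $d(x,y)\le d(x,\bar x)+d(\bar x,y)$ and $d(x',y)\le d(x',\tilde x')+d(\tilde x',y)$ to estimate $2(x\mid x')_y=d(x,y)+d(x',y)-d(x,x')$. In Case B one has $d(\bar x,y)+d(\tilde x',y)=B$, and substitution gives $2(x\mid x')_y\le E$, so $(x\mid x')_y=O(\delta)$. In Case A one has $d(\bar x,y)-B=d(\tilde x',y)\le 3D$, whence $2(x\mid x')_y\le 2\,d(\tilde x',y)+E\le 6D+E$, so $(x\mid x')_y\le 3D+O(\delta)$. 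In either case Lemma \ref{l-0} gives $d(y,[x,x'])\le(x\mid x')_y+4\delta$, and since $D\ge 10000\delta$ the $O(\delta)$ error terms are absorbed, yielding $d(y,[x,x'])\le 5D$. This proves $(1)$, and $(2)$ follows by the symmetry noted above.
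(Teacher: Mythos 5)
Your argument is correct, but it runs along a genuinely different track from the paper's. The paper also projects $x$ and $x'$ to points $p_x,p_{x'}$ on $[o,\xi]$, but it then analyses the geodesic quadrilateral $[x,p_x,p_{x'},x']$ and shows, via two comparison-tripod arguments exploiting the ``right angles'' at $p_x$ and $p_{x'}$, that the broken path $[x,p_x]\cup[p_x,p_{x'}]\cup[p_{x'},x']$ lies in the $5\delta$-neighbourhood of $[x,x']$, after which $d(y,[x,x'])\le 5D$ follows from the proximity of $y$ to that path. You instead reduce to an upper bound on $(x\mid x')_y$ via Lemma \ref{l-0} and obtain it from the staircase estimate $d(x,x')\ge d(x,\bar x)+d(\bar x,\tilde x')+d(\tilde x',x')-O(\delta)$, derived from Lemma \ref{I:right angle 2} together with the four-point condition. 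Both arguments encode the same geometric fact (the path through the two projections is essentially taut), but yours is purely metric, and your explicit Case B actually patches a small slip in the paper: the paper's last line asserts $d(p_{x'},y)\le 3D$, which can fail when $x'$ projects to $[o,y]$ far from $y$ (such an $x'$ still lies in $X_y(3D)$, its projection to $[y,z]$ being the endpoint $y$); in that situation one must use that $y\in[p_{x'},p_x]$, which is precisely what your computation $(x\mid x')_y=O(\delta)$ delivers. Two cosmetic remarks: your assertion that $S_3$ is ``strictly the largest'' of the three sums does not follow from $S_3\ge S_2+2B-O(\delta)$ alone (the largest could be $S_1$), but the inequality you actually need, $S_3-S_1\le O(\delta)$, does follow in every case; and Lemma \ref{t-2} is stated in the direction ``closest on $[y,z]$ implies closest on $[o,\xi]$,'' so identifying your $\bar x$ with a point of ${\rm proj}_\gamma(x)$ costs an extra $O(\delta)$ from the near-uniqueness of nearest-point projections. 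Both discrepancies are harmlessly absorbed by $D\ge 10000\delta$.
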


\begin{proof} We only need to prove $(1)$, because $(2)$ follows from a similar argument. Choose points $p_x$ and $p_{x'}$ on $[o, \xi]$ closest to $x$ and $x'$, respectively. We require $p_{x}$ to be outside of the $10D$-neighborhood of $y$. Then $p_x$ and $p_{x'}$ are at least $7D$ apart. Consider a geodesic quadrilateral $[x, p_x, p_{x'}, x']$. We claim that

\bcl\label{z-1.5} $[p_{x'}, x']\cup[p_{x'}, p_x]\cup[p_x, x]$ is contained in the $5\delta$-neighborhood of $[x', x]$. \ecl

Let $w_1$ and $w_2$ be the pre-images of the center of the comparison tripod of the geodesic triangle $[x', p_{x'}, p_x]$ in $[x', p_{x'}]$ and $[p_{x'}, p_x]$, respectively. Then $d(w_1, w_2)\leq \delta$. This implies that $d(w_1, p_{x'})\leq \delta$ since otherwise $d(x', w_2)< d(x', p_{x'})$, which contradicts the choice of $p_{x'}$. Hence $[p_{x'}, x']\cup[p_{x'}, p_x]$ is in the $2\delta$-neighborhood of $[x', p_x]$. Now consider the triangle $[x', p_x, x]$. Let $w_3$ and $w_4$ be the pre-images of the center of the comparison tripod of the geodesic triangle $[x, x', p_x]$ in $[x, p_{x}]$ and $[x', p_x]$, respectively.

We will prove by contradiction that
$$d(w_3, p_x)\leq 2\delta.$$
Suppose  $d(w_3, p_x)> 2\delta$. Let $w_5$ be a point on $[p_x, w_3]$ such that
$$2\delta<d(w_5, p_x)\leq 2\delta+1.$$
Then there is a point $w_6$ on $[p_x, x']$ such that $d(w_5, w_6)\leq \delta$ and $d(p_x, w_6)<2\delta+1$. Since $d(p_x, p_{x'})>5D>>2\delta$+1, there is a point $w_7$ in $[p_x, p_{x'}]$ such that $d(w_6, w_7)\leq \delta$. Hence we know that
\begin{eqnarray*}d(x, w_7) &\leq& d(x, w_5)+d(w_5, w_6)+d(w_6, w_7)
\\ \nonumber&\leq& d(x, w_5)+2\delta
\\ \nonumber&<&  d(x, w_5)+d(w_5, p_x)=d(x, p_x),
\end{eqnarray*}
contradicting the choice of $p_x$.  So $d(w_3, p_x)\leq 2\delta$.

Therefore $[x, p_x]\cup[p_x, x']$ is contained in the $3\delta$-neighborhood of $[x', x]$. Combining this with the fact that $[p_{x'}, x']\cup[p_{x'}, p_x]$ is in the $2\delta$-neighborhood of $[x', p_x]$, the claim follows.

Since $p_{x'}$ is in the $3D$-neighborhood of $y$, we obtain
$$d(y, [x, x'])\leq d(p_{x'}, y)+d(p_{x'}, [x, x'])\leq 3D+5\delta\leq 5D.$$
\end{proof}

Let $(X,d)$ be a proper geodesic $\delta$-hyperbolic space with a $L$-pole. Let $\gamma=[y, z]$ be a geodesic in $X$ such that ${\rm proj}_{\gamma}(\partial X)$ is contained in the $D$-neighborhood of $\{y, z\}$.
Define $\Phi_{[y,z]}:X\rightarrow X$ as follows.

\bdefe\label{d-q} Let $l>0$ and $\Phi_l: [0, l]\rightarrow [0, l]$ be defined by
$$
\Phi_l(t)=\begin{cases}
\displaystyle \; 2t,\;\;\;\;\;\;\;\;\;\;\;\;\;\;\;\;\;\;\;\;\;\;\;\;\;\;\;\;\;\;\;\;\text{if}\;\; t\in [0, l/3],\\
\displaystyle \; 2l/3+\frac{1}{2}(t-l/3), \;\;\;\;\;\;\;\;\,\text{if}\;\;  t\in [l/3, l].
\end{cases}$$

\begin{rem}
Note that $\Phi_l$ is a $(2, 0)$-quasi-isometry from $[0, l]$ to itself.
\end{rem}

Define $\Phi_{[y,z]}:X\rightarrow X$ as follows: If $x\in X_y(3D)\cup X_z(3D)$, then we define
$$\Phi_{[y, z]}(x)=x.$$
If $x\notin X_y(3D)\cup X_z(3D)$, then pick a point $p_x$ on $[y, z]$ closest to $x$ and we require that $p_x$ is not in the $3D$-neighborhood of $\{y, z\}$. Let $[y', z']$ be the largest subsegment of $[y, z]$ outside of the $3D$-neighborhood of $\{y, z\}$. Let $l=d(y', z')$ and let $\varphi: [y', z']\rightarrow [0, l]$ be an isometry. Then we define
$$\Phi_{[y, z]}(x)=\varphi^{-1}\circ\Phi_l\circ \varphi(p_x).$$
\edefe

Next we prove some properties of the map $\Phi_{[y, z]}$ for later use.
\begin{lemma}\label{map}
The map $\Phi_{[y, z]}$ satisfies:
\begin{enumerate}
\item The displacement of $\Phi_{[y, z]}$ is at least $l/3$.
\item $\Phi_{[y, z]}$ fixes $\partial X$.
\item $\Phi_{[y, z]}(X)$ is $(L+3\delta)$-roughly full in $X$.
\item $\Phi_{[y, z]}$ is a $(K', C')$-quasi-isometry with $K'$ and $C'$ depending only on $D,\delta,L$.
\end{enumerate}
\end{lemma}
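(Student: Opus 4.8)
The plan is to verify the four claimed properties of $\Phi_{[y,z]}$ by working separately on the three regions $X_y(3D)\cup X_z(3D)$, the segment $[y,z]$, and the complementary region, using the structural lemmas (Lemmas \ref{t-1} through \ref{t}) already established about the sets $X_y(R)$ and $X_z(R)$. The key conceptual point is that $\Phi_{[y,z]}$ is the identity outside a ``tube'' around $[y,z]$ and only rescales the projection onto the central portion of the segment, so all estimates reduce to one-dimensional facts about $\Phi_l$ combined with the fact that projection onto $[o,\xi]$ behaves well (Lemmas \ref{t-2} and \ref{t-3}).

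First I would prove (1) and (2), which are the easy parts. For (1), note that a point $x$ whose projection $p_x$ lies near the midpoint of $[y',z']$ satisfies $\varphi(p_x)\approx l/2$, so $\Phi_l$ moves it by roughly $l/6$; more carefully, since $\Phi_l(t)=2t$ on $[0,l/3]$ and the image point $\Phi_{[y,z]}(x)$ differs from $x$ essentially by the displacement of its projection, a point with $\varphi(p_x)=l/3$ gets moved a distance comparable to $l/3$, giving the lower bound on the displacement. For (2), I would argue that $\Phi_{[y,z]}$ moves every point a \emph{bounded} amount: indeed on the tube the displacement is at most $l$ (hence finite but not uniformly bounded in $l$) — so instead the correct argument for fixing $\partial X$ is that $\Phi_{[y,z]}$ agrees with the identity outside the bounded-width tube $B_i$ (or more precisely moves points only along $[y,z]$ by a bounded \emph{transverse} amount), so any Gromov sequence $\{x_n\}\to\eta$ has $(x_n\mid \Phi_{[y,z]}(x_n))_o\to\infty$, whence $\partial\Phi_{[y,z]}=\id$.

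Next I would handle (3), rough fullness of the image. For any $x\in X$ I must find a point within $L+3\delta$ of $\Phi_{[y,z]}(X)$. By Lemma \ref{t-3}, every point of $X$ lies within $L+3\delta$ of either $[y,z]$ or one of the sets $X_y(3D),X_z(3D)$; on the latter two sets $\Phi_{[y,z]}$ is the identity, and on $[y,z]$ the one-dimensional map $\Phi_l$ is surjective onto $[0,l]$, so every point of $[y,z]$ (outside the fixed $3D$-collars, which are themselves in the image) is hit. Combining these covers $X$ up to the error $L+3\delta$, giving (3).

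The main obstacle is (4), the quasi-isometry estimate, and this is where I expect to spend the real effort. The natural strategy is to bound $d(\Phi_{[y,z]}(x),\Phi_{[y,z]}(x'))$ in terms of $d(x,x')$ by splitting into cases according to which regions $x,x'$ belong to. The hard case is when $x$ and $x'$ have projections on opposite sides of the rescaling break (one in $[0,l/3]$, one in $[l/3,l]$), or when one point is in a collar $X_y(3D)$ and the other is deep in the tube; here I would use Lemma \ref{t} to locate $y$ (resp.\ $z$) within $5D$ of the geodesic $[x,x']$, which forces $[x,x']$ to pass near the segment and lets me relate $d(x,x')$ to the sum of the two projection-displacements plus the transverse distances. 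Because $\Phi_l$ is a $(2,0)$-quasi-isometry of $[0,l]$ (as noted in the Remark) and projections onto geodesics in a $\delta$-hyperbolic space are coarsely Lipschitz and coarsely distance-preserving in the relevant sense, the composite distortion is controlled by a multiplicative factor near $2$ and an additive error depending only on $D,\delta,L$; surjectivity up to $L+3\delta$ from (3) supplies the coarse-density requirement. Assembling these case estimates into uniform constants $K'=K'(D,\delta,L)$ and $C'=C'(D,\delta,L)$ is the delicate bookkeeping that constitutes the bulk of the proof.
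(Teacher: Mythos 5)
Your treatment of (1), (2) and (3) is correct and coincides with the paper's: the point of $[y',z']$ at parameter $l/3$ is displaced by exactly $l/3$; $\Phi_{[y,z]}$ is the identity off the bounded set $\{x : d(x,[y,z])\leq L+3\delta\}$ by Lemma \ref{t-3}, so it fixes $\partial X$; and rough fullness follows from surjectivity of $\Phi_l$ on the segment together with Lemma \ref{t-3}. For (4) your overall strategy --- decompose $X$ into the collars $X_y,X_z$, the deep part of the tube, and an intermediate region, then case-check using the $(2,0)$-quasi-isometry $\Phi_l$, the bounded displacement of the projection for points near $[y,z]$, and Lemma \ref{t} in the mixed case --- is also the paper's strategy (the paper uses the three regions $X_1=X_y(3D)\cup X_z(3D)$, $X_2=X\setminus(X_y(20D)\cup X_z(20D))$ and $X_3$ the remainder). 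One remark: the ``rescaling break'' at $l/3$ is not actually a hard case, since $\Phi_l$ is globally a $(2,0)$-quasi-isometry of $[0,l]$; the only hard case is a collar point versus a deep tube point.

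There is, however, one genuine gap in your sketch of that hard case. Splitting the geodesic $[x,x']$ at a point $x''$ within $5D$ of $y$ (Lemma \ref{t}) and summing the estimates for $[x,x'']$ and $[x'',x']$ gives only the \emph{upper} bound $d(\Phi_{[y,z]}(x),\Phi_{[y,z]}(x'))\leq 2d(x,x')+C'$; the triangle inequality run in the other direction does not produce the lower bound $d(\Phi_{[y,z]}(x),\Phi_{[y,z]}(x'))\geq \tfrac12 d(x,x')-C'$, because a priori $\Phi_{[y,z]}(x'')$ need not lie coarsely between $\Phi_{[y,z]}(x)$ and $\Phi_{[y,z]}(x')$. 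Your appeal to projections being ``coarsely distance-preserving in the relevant sense'' does not close this, since projection onto a geodesic can collapse distances for points far from it. The missing step, which the paper supplies, is to apply Lemma \ref{t} a \emph{second} time to the image geodesic $[\Phi_{[y,z]}(x),\Phi_{[y,z]}(x')]=[x,\Phi_{[y,z]}(x')]$: one first checks that $\Phi_{[y,z]}(x')\notin X_y(10D)\cup X_z(10D)$, obtains a point $x'''$ on that geodesic within $5D$ of $y$, and then writes $d(\Phi_{[y,z]}(x),\Phi_{[y,z]}(x'))=d(x,x''')+d(x''',\Phi_{[y,z]}(x'))$ and bounds each summand from below via the already-established cases. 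With that addition your argument matches the paper's proof.
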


\begin{proof}
$(1)$ By the definition of $\Phi_{[y, z]}$, the point on $[y',z']$ $l/3$ away from $y'$ is moved by $\Phi_{[y, z]}$ by $l/3$. Hence the displacement of $\Phi_{[y, z]}$ is at least $l/3$.

$(2)$ For all $x\in X$ with $d(x,[y,z])>L+3\delta$, Lemma \ref{t-3} yields that
$$x\in X_y(3D)\cup X_z(3D).$$
By the definition of $\Phi_{[y, z]}$, we see that $\Phi_{[y, z]}(x)=x$.  Hence $\Phi_{[y, z]}$ fixes $\partial X$.

$(3)$ Fix $x\in X$. If  $x\in X_y(3D)\cup X_z(3D)$, then $\Phi_{[y, z]}(x)=x$, as desired. Assume that $x\not\in X_y(3D)\cup X_z(3D)$. Let $p_x$ be the point on $[y, z]$ closest to $x$ such that $p_x$ is not in the $3D$-neighborhood of $\{y, z\}$. By the definition of the map $\Phi_{[y, z]}$, we observe that $\Phi_{[y, z]}$ is a self homeomorphism on $[y,z]$. Thus there is $\bar x\in [y,z]$ with $\Phi_{[y, z]}(\bar x)=p_x$. Hence Lemma \ref{t-3} guarantees that
$$d(x,\Phi_{[y, z]}(\bar x))=d(x,[y,z])\leq L+3\delta.$$

$(4)$ We now show that $\Phi_{[y, z]}$ is a quasi-isometry. Let
$$X_1=X_y(3D)\cup X_z(3D), \;\;\;\;\;X_2=X\setminus \Big(X_y(20D)\cup X_z(20D)\Big)$$
and let
$$X_3=X\setminus(X_1\cup X_2).$$
By the definition of $X_y(R)$ (see (\ref{t-4})), the sets $X_1, X_2$ and $X_3$ are pairwise disjoint that cover $X$. Let $x, x'\in X$. By (3), it suffices to show that
\be\label{z-1.6}
\frac{1}{K'}d(x, x')-C'\leq d(\Phi_{[y, z]}(x), \Phi_{[y, z]}(x'))\leq K'd(x, x')+C'.
\ee
By symmetry, there are four cases to check.

\bca Suppose $x, x'\in X_1$. \eca

Note that $\Phi_{[y, z]}$ is the identity map on $X_1$. So (\ref{z-1.6}) obviously holds for any $K'\geq 1$ and $C'\geq 0$.

\bca\label{ca-1}  Suppose $x, x'\in X_2\cup X_3=X\setminus X_1$.\eca
By Lemma \ref{t-3},  the map $\Phi_{[y, z]}$ sends $w\in X_2\cup X_3$ to $p_w$ and hence moves every point by at most $K_0=L+3\delta$. Hence it is a $(1, 2K_0)$-quasi-isometry. On the other hand, the map $\Phi_{[y, z]}$ is a $(2, 0)$-quasi-isometry on $[y',z']$.  Therefore in this case, the inequality (\ref{z-1.6}) is valid for $K'\geq 2$ and $C'\geq 4K_0$.

\bca\label{ca-2} Suppose $x\in X_1$ and $x'\in X_3$. \eca
Again by Lemma \ref{t-3}, $X_3$ is contained in the ($20D+K_0$)-neighborhood of $\{y, z\}$.  Thus by the definition of  $\Phi_{[y, z]}$, it is not difficult to see that
$$\Phi_{[y, z]}(X_3\cap B(y, 20D+K_0))\subset B(y, 40D)$$
and similarly,
$$\Phi_{[y, z]}(X_3\cap B(z, 20D+K_0))\subset B(z, 40D). $$
Hence it follows from Lemma \ref{t-3} that $\Phi_{[y, z]}$ moves $x'$ by at most $40D+K_0$. On the other hand, the map $\Phi_{[y, z]}$ fixes $x$. Hence we obtain (\ref{z-1.6}) for $C'\geq 40D+K_0$ and $K'\geq 1$.

\bca Suppose $x\in X_1$ and $x'\in X_2$. \eca
Let $[x, x']$ be a geodesic between $x$ and $x'$. Without loss of generality, we may assume that $x\in X_y(3D)$. Lemma \ref{t} shows that $[x, x']$ passes through the $5D$-neighborhood of $y$. Let $x''\in B(y, 5D)$ be a point on $[x, x']$. Note that $x\in X_1$ and $x''\in X_3$.

By Case \ref{ca-2}, the inequality (\ref{z-1.6}) holds for $x, x''$ with $C'=40D+K_0$ and $K'=1$.  By Case \ref{ca-1}, the inequality (\ref{z-1.6}) holds for $x'', x'$ with $C'=4K_0$ and $K'=2$. Since $x''\in[x,x']$, these two facts imply that
\begin{eqnarray*}d(\Phi_{[y, z]}(x), \Phi_{[y, z]}(x'))&\leq& d(\Phi_{[y, z]}(x), \Phi_{[y, z]}(x''))+d(\Phi_{[y, z]}(x''), \Phi_{[y, z]}(x'))
\\ \nonumber&\leq& d(x,x'')+40D+K_0 +2d(x',x'')+4K_0
\\ \nonumber&\leq&  2d(x, x')+5K_0+40D,
\end{eqnarray*}
as required.

We now prove the other half of (\ref{z-1.6}) for the points $x$ and $x'$. Consider a geodesic $[\Phi_{[y, z]}(x), \Phi_{[y, z]}(x')]=[x, \Phi_{[y, z]}(x')]$. By the definition of $\Phi_{[y, z]}$ and by the fact $x'\in X_2$, an elementary computation gives that
$$\Phi_{[y, z]}(x')\in X\setminus (X_y(10D)\cup X_z(10D)).$$
Since $x\in X_y(3D)$, Lemma \ref{t} shows that there is a point $x'''$ on $[x, \Phi_{[y, z]}(x')]$ such that
$$d(x''',y)\leq 5D.$$
This implies that
\be\label{z-2.0} d(x, x''')\geq d(x, x'')-d(y, x'')-d(y, x''')\geq d(x, x'')-10D.\ee
Since $y\in X_1$ and $x''\in X_3$, a similar argument as Case \ref{ca-2} gives that
$$ d(\Phi_{[y, z]}(x''),y)\leq d(\Phi_{[y, z]}(x''),x'')+d(x'',y)\leq 45D+K_0.$$
Since $x'\in X_2$ and $x''\in X_3$, the assertion in Case \ref{ca-1} yields that
\beq\nonumber\;\;\; d(\Phi_{[y, z]}(x'),x''')&\geq& d(\Phi_{[y, z]}(x''),\Phi_{[y, z]}(x'))-d(x''',y)- d(\Phi_{[y, z]}(x''),y)
\\ \nonumber &\geq& d(\Phi_{[y, z]}(x''),\Phi_{[y, z]}(x'))-50D-K_0
\\ \nonumber &\geq& \frac{1}{2}d(x'',x')-50D-5K_0.
\eeq
Hence this together with (\ref{z-2.0}) guarantees that
\beq\nonumber d(\Phi_{[y, z]}(x),\Phi_{[y, z]}(x'))&=& d(x,x''')+d(x''',\Phi_{[y, z]}(x'))
\\ \nonumber &\geq& d(x, x'')-10D+\frac{1}{2}d(x'',x')-50D-5K_0
\\ \nonumber &\geq& \frac{1}{2}d(x,x')-60D-5K_0,
\eeq
which completes the proof.
\end{proof}

\textbf{Proof of Proposition \ref{b-l}.} Assume that $(X,d)$ is a proper geodesic $\delta$-hyperbolic space with a pole. Suppose $\Delta_{(K , C, \rho)}(\partial X)$ is not $M$-roughly full in $X$ for any $M>0$. To prove that $X$ is not boundary rigid, it suffices to show that there exist $K_1$, $C_1$ and a $(K_1, C_1)$-quasi-isometry $\Phi:X\rightarrow X$ such that $\Phi$ fixes $\partial X$ but $\Phi$ has infinite displacement.

Let $\gamma_i=[y_i, z_i]$ be the sequence of segments given by Lemma \ref{sequence}. For each $i$, we may define the map $\Phi_i=\Phi_{[y_i, z_i]}$ as in Definition \ref{d-q}. We claim that
\bcl\label{z-1.7} Every point $x\in X$ is moved by at most one $\Phi_i$.\ecl
Suppose that there is some  $x$ moved by $\Phi_i$ and $\Phi_j$ with $i\neq j$. By the definition of $\Phi_i$, we obtain
$$x\in X\setminus(X_{y_i}(3D)\cup X_{z_i}(3D))\;\;\;\mbox{and}\;\;\; x\in X\setminus(X_{y_j}(3D)\cup X_{z_j}(3D)).$$
It follows from Lemma \ref{t-3} that
$$d(x,[y_i,z_i])\leq L+3\delta\;\;\;\mbox{and}\;\;\; d(x,[y_j,z_j])\leq L+3\delta,$$
which implies that
$$d(\gamma_i,\gamma_j)\leq 2L+6\delta.$$
On the other hand, by (2) of Lemma \ref{sequence}, we have
$$d(\gamma_i,\gamma_j)\geq 10L+10\delta+1,$$
contradiction. Hence Claim \ref{z-1.7} is true.

Now define
$$\Phi=\Phi_1\circ \cdots \circ \Phi_i\circ \cdots.$$
Claim \ref{z-1.7} ensures that $\Phi$ is well defined and moreover, $\Phi$ is a $(K_1, C_1)$-quasi-isometry with $K_1=K'^2$ and $C_1=K'C'+C'$ by using (4) of Lemma \ref{map}.

However, by (1) and (2) of Lemma \ref{map}, $\Phi$ fixes $\partial X$ and the displacement of $\Phi$ is not finite because $d(y_i,z_i)\to \infty$ as $i\to \infty$. This shows that $X$ is not boundary rigid.
\qed

\textbf{Proof of Theorem \ref{thm-1}.} It follows from Propositions \ref{l-b}, \ref{full implies perfect} and  \ref{b-l}.
\qed

\section{Geodesically rich spaces}

In \cite{Sh}, geodesically rich Gromov hyperbolic spaces were introduced and shown to be boundary rigid. The first goal of this section is to prove Theorem \ref{cor-111}, which says that, within the class of proper geodesic hyperbolic spaces with a pole, being boundary rigid is equivalent to being geodesically rich.  We now recall the definition of geodesically rich spaces and show that it can be simplified.

\bdefe\label{gr} A geodesic metric space $(X,d)$ is said to be {\it geodesically rich} if there are constants $r_0,r_1,r_2,r_3$, and $r_4$ such that
\begin{enumerate}
  \item for every pair of points $p$ and $q$ with $d(p,q)\geq r_0$, there exists a bi-infinite geodesic $\gamma$ such that $d(p,\gamma)<r_1$ and $|d(q,\gamma)-d(p,q)|<r_2$ and
  \item for any bi-infinite geodesic $\gamma$ and any $p\in X$, there exists a bi-infinite geodesic $\gamma'$ such that $d(p, \gamma')<r_3$ and $|d(p,\gamma)-d(\gamma', \gamma)|<r_4$.
\end{enumerate}
\edefe

\begin{lemma}\label{f implies s} Assume that $(X,d)$ is a proper geodesic $\delta$-hyperbolic space. If $X$ satisfies the first condition of Definition \ref{gr}, then $X$ is geodesically rich.
\end{lemma}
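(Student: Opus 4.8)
The plan is to show that the first condition of Definition \ref{gr} already forces the second, so that $X$ is geodesically rich. Fix a bi-infinite geodesic $\gamma$ and a point $p\in X$, and set $D=d(p,\gamma)$. Since $X$ is proper, I choose a point $q\in\gamma$ realizing $D=d(p,q)$. The key idea is that the geodesic provided by condition (1), applied to the pair $(p,q)$, is the geodesic $\gamma'$ required by condition (2). If $D<r_0$ the pair $(p,q)$ is too close, so instead I would apply condition (1) to $(p,q_0)$ for any $q_0$ with $d(p,q_0)\ge r_0$ (such a point exists because $\gamma$ being bi-infinite forces $X$ to be unbounded); the resulting $\gamma'$ then satisfies $d(p,\gamma')<r_1$ and $d(\gamma',\gamma)\le d(p,\gamma')+D<r_1+r_0$, which already gives both requirements with $r_4$ of size $\approx r_0+r_1$. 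So I assume $D\ge r_0$ from now on and let $\gamma'$ be the bi-infinite geodesic obtained from condition (1) for $(p,q)$, so that $d(p,\gamma')<r_1$ and $d(q,\gamma')>D-r_2$.

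The requirement $d(p,\gamma')<r_1$ is immediate, giving $r_3=r_1$, and the upper estimate $d(\gamma',\gamma)\le D+r_1$ is easy: picking $b_p\in\gamma'$ with $d(p,b_p)<r_1$ one has $d(\gamma',\gamma)\le d(b_p,q)\le d(b_p,p)+d(p,q)<r_1+D$.

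The heart of the argument, and the step I expect to be the main obstacle, is the matching lower bound $d(\gamma',\gamma)\ge D-O(\delta)-r_2$, i.e.\ showing $\gamma'$ cannot sneak back close to $\gamma$. Let $a_0\in\gamma$ and $b_0\in\gamma'$ realize $d:=d(\gamma',\gamma)$, and suppose for contradiction that $d$ is much smaller than $D$. The geometric picture is that $\gamma'$ passes near $p$ and then, to reach $a_0\in\gamma$, must shadow the geodesic $[p,a_0]$; but since $q$ is the nearest-point projection of $p$ to $\gamma$, the point $q$ lies essentially on $[p,a_0]$, so $\gamma'$ would be forced near $q$, contradicting $d(q,\gamma')>D-r_2$. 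To make this precise I would: (i) apply Lemma \ref{I:right angle 2} with $o=p$ and $u=a_0$ to get $d(p,a_0)\ge D+d(q,a_0)-6\delta$, i.e.\ $(p\mid a_0)_q\le 3\delta$, and then Lemma \ref{l-0} to conclude $d(q,[p,a_0])\le 7\delta$; (ii) apply $\delta$-thinness to the geodesic quadrilateral with vertices $p,b_p,b_0,a_0$ (splitting it into two triangles along the diagonal $[p,b_0]$ and using Lemma \ref{l-2}(1)), whose two ``short'' sides $[p,b_p]$ and $[b_0,a_0]$ have lengths $<r_1$ and $d$ with endpoints on $\gamma'$, to conclude that every point of $[p,a_0]$ lies within $\max\{r_1,d\}+6\delta$ of $\gamma'$; (iii) combine (i) and (ii) to bound $d(q,\gamma')\le \max\{r_1,d\}+13\delta$. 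Setting this against $d(q,\gamma')>D-r_2$ gives $\max\{r_1,d\}\ge D-r_2-13\delta$; when the maximum is $r_1$ this merely bounds $D$ and the bound $d\ge 0$ suffices, while in the remaining case $d=d(\gamma',\gamma)\ge D-(r_1+r_2+13\delta)$. Together with the upper bound this yields $|d(p,\gamma)-d(\gamma',\gamma)|<r_1+r_2+13\delta$, so condition (2) holds with $r_3=r_1$ and $r_4=r_1+\max\{r_2+13\delta,\,r_0\}$, completing the proof.
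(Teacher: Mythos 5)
Your proof is correct and follows essentially the same route as the paper's: reduce to verifying the second condition of Definition \ref{gr}, apply the first condition to $p$ and its nearest-point projection $q$ onto $\gamma$, and then use thin triangles/quadrilaterals to show that the resulting $\gamma'$ cannot come back close to $\gamma$ without forcing $d(q,\gamma')$ to be small, contradicting $|d(q,\gamma')-d(p,q)|<r_2$. The only cosmetic differences are that the paper treats the degenerate case by simply taking $\gamma'=\gamma$ when $d(p,\gamma)$ is small (rather than invoking condition (1) on a distant pair), and that your phrase ``realize $d(\gamma',\gamma)$'' should be read as ``for arbitrary $a_0\in\gamma$, $b_0\in\gamma'$,'' since the infimum of the distance between two unbounded closed sets need not be attained --- your chain of inequalities applies verbatim to any such pair and yields the same lower bound.
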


\begin{proof}
Assume $X$ satisfies the first condition of Definition \ref{gr} with constants $r_0$, $r_1$ and $r_2$.
Let $r_3=2r_1+r_2+r_0+1000\delta+1$ and  $r_4=2r_1+3r_2+r_0+1000\delta+1\geq r_3$. Let $\gamma$ be a bi-infinite geodesic and $p\in X$. We check that there exists a bi-infinite geodesic line $\gamma'$ such that  the second condition of Definition \ref{gr} holds with the above $r_3$ and $r_4$.

Let $q$ be a point on $\gamma$ closest to $p$. If $d(p,\gamma)<r_3$, then take $\gamma'=\gamma$. Thus $d(p,\gamma')<r_3$ and $d(p,\gamma)$ differs from the distance between $\gamma'$ and $\gamma$ by not more than $r_4$.

It remains to consider the case that $d(p,\gamma)\geq r_3\geq r_0$. Since $X$ satisfies the first condition of Definition \ref{gr}, for the points $p$ and $q$, we know that there exists a bi-infinite geodesic line $\gamma'$ such that
\be\label{l-1.1} d(p,\gamma')<r_1 \ee
and
\be\label{l-1.2} |d(q,\gamma')-d(p,q)|<r_2.\ee

We show that $\gamma'$ is the desired. To this end, let $p'$ and $q'$ on $\gamma'$ be such that $d(p,p')=d(p,\gamma')$ and $d(q,q')=d(q,\gamma')$. Let $q_0\in \gamma$ be such that $d(q',q_0)=d(q',\gamma)$. For later use, we need some auxiliary estimates. The first one is
\be\label{l-1.3} d(p',q')<r_1+r_2+6\delta. \ee

Indeed, by (\ref{l-1.1}) and (\ref{l-1.2}), we have $d(q, p')\leq d(q, p)+d(p, p')\leq d(q,\gamma')+r_1+r_2=d(q,q')+r_1+r_2.$
By Lemma \ref{l-2} and by the definition of $\delta$-hyperbolicity, we obtain
$d(q, p')\geq d(q,q')+d(p',q')-6\delta.$
These two inequalities imply (\ref{l-1.3}).

Secondly, we check
\be\label{l-1.4} d(q,q_0)<2r_1+2r_2+12\delta. \ee
The fact (\ref{l-1.1}) and (\ref{l-1.3}) guarantee that
\be\label{l-1.5} d(p,q')\leq d(p,p')+d(p',q')\leq 2r_1+r_2+6\delta.\ee
This, together with (\ref{l-1.2}), gives that
\beq\label{g-1.1} d(q, q')&=&d(q,\gamma')\leq  d(q, p)+r_2
\\ \nonumber&=& d(p,\gamma)+r_2
\\ \nonumber&\leq& d(q',\gamma)+d(p,q')+r_2
\\ \nonumber&\leq&  d(q',q_0)+2r_1+2r_2+6\delta.
\eeq
Again by Lemma \ref{l-2} and by the definition of $\delta$-hyperbolicity, we have
$d(q, q')\geq d(q_0,q')+d(q_0,q)-6\delta.$
Combining with (\ref{g-1.1}), the inequality (\ref{l-1.4}) follows.

Now by (\ref{l-1.5}) and by the choice of $r_3$, we have
$d(q',\gamma)\geq d(p,\gamma)-d(p,q')>100\delta.$
Then we may take $w\in [q,q']$ with $d(q',w)=20\delta$. Thus $d(w,\gamma)\geq 80\delta$ and $d(w,\gamma')=20\delta$. For any $x\in \gamma$ and $x'\in\gamma'$, let $[x,q]$ and $[x',q']$ be the sub-curves of $\gamma$ and $\gamma'$, respectively. Consider the geodesic quadrilateral $[x,q]\cup [q,q']\cup [q',x'] \cup [x',x]$. The above facts, together with the $\delta$-hyperbolicity of $X$, ensure that there is $w'\in[x,x']$ such that
$d(w,w')\leq 2\delta.$
Therefore, it follows from  Lemma \ref{l-2}, (\ref{l-1.4}) and (\ref{l-1.5}) that
\beq \nonumber d(x, x')&\geq& d(x, w')\geq d(x, w)-d(w,w')
\\ \nonumber&\geq& d(x,q')-d(w,q')-2\delta
\\ \nonumber&\geq& d(q_0,q')+d(q_0,x)-28\delta
\\ \nonumber&\geq& d(q,q')-d(q,q_0)+d(q_0,x)-28\delta
\\ \nonumber&\geq& d(q,q')-2r_1-2r_2-40\delta
\\ \nonumber&\geq& d(q,p)-2r_1-3r_2-40\delta.
\eeq

This yields $d(\gamma,\gamma')\geq d(p,q)-r_4$ by the choice of $r_4$.
On the other hand, the fact (\ref{l-1.2}) implies that
$d(\gamma,\gamma')\leq d(q,q')\leq d(p,q)+r_2\leq d(p, q)+r_4.$
Hence the lemma follows.
 \end{proof}

\begin{lemma}\label{l-p} Let $(X,d)$ be a proper geodesic $\delta$-hyperbolic space. If $X$ is geodesically rich, then $X$ has a pole.
\end{lemma}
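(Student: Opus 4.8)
The plan is to show that an arbitrary fixed base point $o\in X$ already serves as a pole, using only the first condition of Definition \ref{gr}; the second condition will not be needed. First I would record that we may assume $X$ is unbounded, equivalently $\partial X\neq\emptyset$ (this is the only case in which the conclusion is meaningful, since a pole requires a point $v\in\partial X$). Indeed, choosing any two points at distance at least $r_0$, the first condition of Definition \ref{gr} produces a bi-infinite geodesic, whose two endpoints lie in $\partial X$; hence $\partial X$ contains at least two points. Fix $o\in X$. Since $X$ is proper and geodesic, for every $\xi\in\partial X$ there is a geodesic ray $[o,\xi]$ joining $o$ to $\xi$, a standard consequence of properness via Arzel\`a--Ascoli.

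Next I would take an arbitrary $x\in X$ and locate a boundary point $v$ with $d(x,[o,v])$ bounded independently of $x$. If $d(o,x)<r_0$, I simply pick any $v\in\partial X$ and a ray $[o,v]$; since $o\in[o,v]$ this gives $d(x,[o,v])\leq d(o,x)<r_0$. If instead $d(o,x)\geq r_0$, I apply the first condition of Definition \ref{gr} to the pair $p=x$, $q=o$ (legitimate, as $d(p,q)=d(x,o)\geq r_0$), obtaining a bi-infinite geodesic $\gamma$ with $d(x,\gamma)<r_1$. Writing $a,b\in\partial X$ for the endpoints of $\gamma$, so that $\gamma=[a,b]$, and choosing $x^*\in\gamma$ with $d(x,x^*)<r_1$, I then consider the geodesic triangle with vertices $o,a,b$ and sides $[o,a]$, $[o,b]$, and $\gamma$.

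The key step is thinness of this (ideal) triangle: by Lemma \ref{l-2}(1) every point of the edge $\gamma=[a,b]$ lies within $3\delta$ of $[o,a]\cup[o,b]$. Applying this to $x^*$ shows that $x^*$ is within $3\delta$ of $[o,a]$ or of $[o,b]$, whence $x$ is within $r_1+3\delta$ of a geodesic joining $o$ to one of the boundary points $a,b$. Combining the two cases, every $x\in X$ lies in the $L$-neighborhood of some geodesic from $o$ to a point of $\partial X$ with $L=\max\{r_0,\,r_1+3\delta\}$. Thus $o$ is an $L$-pole, and $X$ has a pole.

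I expect the only delicate point to be the legitimacy of invoking the thin-triangle estimate Lemma \ref{l-2}(1) for a triangle with two ideal vertices $a,b$ and one finite vertex $o$, together with the existence of geodesic rays $[o,\xi]$ for all $\xi\in\partial X$; both are routine in a proper geodesic $\delta$-hyperbolic space but should be stated with care. It is worth emphasizing that the argument uses only the first richness condition and that the pole is obtained directly inside $X$, so no appeal to Lemma \ref{zh-0} is required.
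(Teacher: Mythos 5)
Your proof is correct and follows essentially the same route as the paper: in both arguments one fixes $o\in X$, applies only the first richness condition to a point $x$ with $d(o,x)\geq r_0$ to get a bi-infinite geodesic $\gamma=[a,b]$ within $r_1$ of $x$, and then uses thinness of the ideal triangle $\Delta(o,a,b)$ to push $x$ close to $[o,a]\cup[o,b]$, handling $d(o,x)<r_0$ trivially. The only differences are cosmetic (the paper quotes the thin-triangle bound as $\delta$ rather than the $3\delta$ of Lemma \ref{l-2}(1), so its constant is $\max\{r_1+\delta,r_0\}$), and your explicit remark that $\partial X\neq\emptyset$ in the relevant case is a reasonable extra precaution.
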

\bpf
Let $X$ be a geodesically rich space with the constants
$r_0,r_1,r_2$ in the first condition.  Since $X$ is  proper and geodesic, any two points in $\bar X$ can be connected by a geodesic. Let $o\in X$ and $q\in X$.

Suppose first that $d(o,q)> r_0$. Since $X$ is geodesically rich, there exists a geodesic $\gamma$ joining $a,b\in\partial X$ such that $d(q,\gamma)< r_1$. Let $\alpha_1$ and $\alpha_2$ be geodesics joining $o$ to $a$ and $b$, respectively. Then by the $\delta$-hyperbolicity of $X$, we have $d(q,\alpha_1\cup \alpha_2)\leq r_1+\delta$.

For the other case that $d(o,q)\leq r_0$, we have
$d(q,\alpha_1\cup \alpha_2)\leq r_0.$
Hence $o$ is an $L$-pole of $X$, where $L={\rm max}\{r_1+\delta, r_0\}$.
\epf

\begin{proposition}\label{p: up implies gr}
Let $(X,d)$ be a proper geodesic $\delta$-hyperbolic space with a pole. If  $\partial X$ is uniformly perfect, then $X$ is geodesically rich.
\end{proposition}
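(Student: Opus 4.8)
The plan is to verify the first condition of Definition \ref{gr} and then invoke Lemma \ref{f implies s}, which already upgrades that condition to full geodesic richness. The heart of the matter is to attach to each pair $p,q$ a single bi-infinite geodesic that passes near $p$ and onto which $q$ projects near $p$. I would reduce this to a statement about Gromov products: it suffices to find a uniform constant $B$ such that for every $p,q\in X$ there are two boundary points $\xi_1,\xi_2\in\partial X$ with $(\xi_1|\xi_2)_p\le B$, $(q|\xi_1)_p\le B$ and $(q|\xi_2)_p\le B$. Indeed, setting $\gamma:=[\xi_1,\xi_2]$, Lemma \ref{l-0} converts $(\xi_1|\xi_2)_p\le B$ into $d(p,\gamma)\le B+4\delta=:r_1$, so $\gamma$ runs near $p$; the two bounds on $(q|\xi_j)_p$ are designed to force the nearest-point projection of $q$ onto $\gamma$ to lie near $p$, which is exactly what makes $d(q,\gamma)$ close to $d(p,q)$.

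To produce the two boundary points I would exploit the results already proved. Since $\partial X$ is uniformly perfect, Proposition \ref{l-b} gives that $X$ is boundary rigid, and then Proposition \ref{b-l}, applied with $K=1$, $C=0$ and $\rho=4\delta$, shows that $\Delta_{(1,0,4\delta)}(\partial X)$ is $M$-roughly full for some $M$. Hence every $p\in X$ lies within $M$ of a $(1,0,4\delta)$-quasicentroid of some triple $\{\zeta_1,\zeta_2,\zeta_3\}\subset\partial X$, and Lemma \ref{l-0} then yields $(\zeta_i|\zeta_j)_p\le M+8\delta=:T$ for $i\ne j$. This is the \emph{only} place uniform perfectness enters. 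Now I would use the standard inequality $(\zeta_i|\zeta_j)_p\ge\min\{(\zeta_i|q)_p,(q|\zeta_j)_p\}-4\delta$ to observe that at most one index $i$ can satisfy $(q|\zeta_i)_p>T+4\delta$ (two such indices would force $(\zeta_i|\zeta_j)_p>T$). Thus at least two of the three vertices, renamed $\xi_1,\xi_2$, satisfy $(q|\xi_1)_p,(q|\xi_2)_p\le T+4\delta$, and together with $(\xi_1|\xi_2)_p\le T$ they are the points sought, with $B:=T+4\delta$; since the $\zeta_i$ are pairwise far at $p$ they are distinct, so $\gamma=[\xi_1,\xi_2]$ is a genuine bi-infinite geodesic.

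It then remains to confirm the two defining inequalities for $\gamma$. The upper bound $d(q,\gamma)\le d(q,p)+d(p,\gamma)\le d(p,q)+r_1$ is immediate. For the lower bound I would fix $c\in\gamma$ with $d(p,c)\le r_1$ and let $m$ be the foot of $q$ on $\gamma$, say on the sub-ray $[c,\xi_1)$. Changing the basepoint from $p$ to $c$ costs at most $d(p,c)+O(\delta)$, so $(q|\xi_1)_c\le B+r_1+O(\delta)$; since the projection of $q$ onto $[c,\xi_1)$ is within $O(\delta)$ of the point at distance $(q|\xi_1)_c$ from $c$, this gives $d(c,m)\le B+r_1+O(\delta)$ and hence $d(p,m)\le B+2r_1+O(\delta)=:r_2'$. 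Consequently $d(q,\gamma)=d(q,m)\ge d(q,p)-r_2'$, so $|d(q,\gamma)-d(p,q)|\le r_2:=\max\{r_1,r_2'\}$. Taking $r_0:=1$ (any positive value works, as nothing above needed $p,q$ far apart) completes the first condition of Definition \ref{gr}, and Lemma \ref{f implies s} concludes the proof.

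I expect the genuine obstacle to be the projection estimate of the last paragraph: one must show that controlling the Gromov products $(q|\xi_j)_p$ really pins the foot of $q$ on $\gamma$ near $p$, rather than allowing it to slide out toward an endpoint $\xi_j$; this requires the basepoint-change bookkeeping and the ``at most one vertex points toward $q$'' dichotomy to be carried out with explicit $\delta$-dependent constants rather than the schematic $O(\delta)$ above. By contrast, the contribution of uniform perfectness itself is painless, since it is imported wholesale as the roughly-full property already furnished by Propositions \ref{l-b} and \ref{b-l}; the remaining work is routine hyperbolic geometry.
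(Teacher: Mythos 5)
Your proposal is correct and follows essentially the same route as the paper: reduce to the first condition via Lemma \ref{f implies s}, use Propositions \ref{l-b} and \ref{b-l} to place $p$ within bounded distance of a quasi-centroid of an ideal triangle $\{\zeta_1,\zeta_2,\zeta_3\}$, and then select the side of that triangle ``opposite'' to the direction of $q$. The only difference is bookkeeping: you carry out the selection and the estimate $|d(q,\gamma)-d(p,q)|\le r_2$ with Gromov products and a basepoint change, whereas the paper works with nearest-point projections onto the three sides and a case analysis by contradiction; both verifications go through.
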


\begin{proof}
By Lemma \ref{f implies s}, it suffices to check the first condition of Definition \ref{gr}. Let $D_0=D(1,0,3\delta,\delta)$ be the constants given in Lemma \ref{bound of centroid}. Since $\partial X$ is uniformly perfect, by Theorem \ref{thm-1}, $X$ is boundary rigid. Then by Proposition \ref{b-l},  $\Delta_{(1,0,3\delta)}(\partial X)$ are roughly $C$-roughly full in $X$ for some constant $C$. Let $C_1=C+3\delta$ and let $r_0=r_1=r_2= 1000D_0+C_1$.

Let $p$ and $q$ in $X$ with $d(p,q)\geq r_0$. Then $p$ is in the $C_1$-neighborhood of three bi-infinite geodesics $[\xi_1, \xi_2]$, $[\xi_3, \xi_2]$ and $[\xi_1, \xi_3]$, where $\xi_i\in \partial X$ for $i=1,2,3$. Let $q'$ be a point in $[\xi_1, \xi_2]\cup[\xi_3, \xi_2]\cup[\xi_1, \xi_3]$ closest to $q$. We consider two cases.

\bca Suppose $d(q' , p)<500D_0$.\eca
Let $\gamma$ be one of $[\xi_1, \xi_2]$, $[\xi_3, \xi_2]$ and $[\xi_1, \xi_3]$. Then we have
$$d(q, q')\leq d(q, \gamma)\leq d(q, q')+d(q', p)+d(p,\gamma)\leq d(q, q')+500D_0+C_1$$
and
$$d(q,q')-500D_0 \leq d(q,q')-d(q',p)\leq d(q,p)\leq d(q,q')+d(q',p)\leq d(q,q')+500D_0.$$
Hence
$|d(q, \gamma)-d(q, p)|\leq 1000D_0+C_1= r_2.$

\bca Suppose $d(q' , p)\geq 500D_0$.\eca

Lemma \ref{l-2} ensures that $q'$ is in the $3\delta$-neighborhood of only two of the three geodesics $[\xi_1, \xi_2]$, $[\xi_3, \xi_2]$ and $[\xi_1, \xi_3]$. Without loss of generality, suppose that $q'$ is in the $3\delta$-neighborhood of $\gamma_1=[\xi_3, \xi_2]$ and $\gamma_2=[\xi_1, \xi_3]$. Let $\gamma=[\xi_1, \xi_2]$. Let $q_0$ be a point in $\gamma$ closest to $q$.

If $d(q_0, p)\leq 3D_0$, then we obtain
$$|d(q, q_0)-d(q, p)|\leq d(p, q_0)\leq 3D_0$$
and therefore,
$$|d(q, q_0)-d(q, p)|=|d(q, \gamma)-d(q, p)|\leq3D_0\leq r_2.$$

We are thus left to assume that  $d(q_0, p)>3D_0$. Without loss of generality, we assume $q'\in\gamma_1$ because the case $q'\in\gamma_2$ follows similarly. Again by Lemma \ref{l-2}, $\gamma$ is in the $3\delta$-neighborhood of $\gamma_1\cup \gamma_2$. So there are two subcases at our hands.

\textbf{Subcase} $4.2(a).$ Suppose $q_0$ is in the $3\delta$-neighborhood of $\gamma_1$.

Let $q_1$ be a point on $\gamma_1$ that is at most $3\delta$ away from $q_0$. Since $[q, q']$ and $[q', q_1]$ diverge at $q'$, there is a point $v$ on $[q, q_1]$ such that $v$ is at most $5\delta$ away from $q'$. Since $d(q_0, q_1)\leq 3\delta$, the geodesic $[q, q_0]$ is in the $3\delta$-neighborhood of $[q, q_1]$. Hence there is a point $v_1$ on $[q, q_0]$ such that $d(v_1, v)\leq 3\delta$. Note that $[v_1, q_0]$ and $[q_1, v]$ are in the $3\delta$-neighborhoods of each other. Also $[q_1, v]$ and $[q', q_1]$ are in the $5\delta$-neighborhoods of each other. Hence  $[v_1, q_0]$ and $[q', q_1]$ are in the $8\delta$-neighborhoods of each other. Since $d(q', p)\geq 500D_0>100D_0$, we know that $[q', q_1]$ passes through the $C_1$-neighborhood of $p$. Hence $[v_1, q_0]$ passes through the $(C_1+8\delta)$-neighborhood of $p$. Since $D_0>C_1+8\delta$, $[v_1, q_0]$ and hence $[q, q_0]$ pass through the $D_0$-neighborhood of $p$.

Let $w\in [q, q_0]\cap B(p, D_0)$. Since $d(q_0, p)>3D_0$, we have $d(w, q_0)\geq 2D_0$. Since $D_0>C_1+8\delta$, we obtain
\beq \nonumber d(q, q_0)&=& d(q, w)+d(w, q_0)\geq d(q, w)+2D_0
\\ \nonumber&>& d(q, w)+D_0+C_1+8\delta
\\ \nonumber&\geq& d(q, w)+d(w, p)+d(p, \gamma)
\\ \nonumber&\geq& d(q, \gamma).
\eeq
This contradicts the choice of $q_0$.

\textbf{Subcase} $4.2(b).$  Suppose $q_0$ is in the $3\delta$-neighborhood of $\gamma_2$.

Let $q_1$ be a point on $\gamma_2$ that is at most $3\delta$ away from $q_0$. Let $q''$ be a point on $\gamma_2$ that is closest to $q$. Note that $q''$ can be chosen such that it is at most $3\delta$ away from $q'$. Hence
$d(p, q'')\geq 500D_0-3\delta,$
which is big enough to guarantee that $[q'', q_1]$ passes through the $C_1$-neighborhood of $p$.  Now we can get a contradiction by repeating the above argument with $q'$ replaced by $q''$ and $\gamma_1$ replaced by $\gamma_2$ as in Subcase $4.2(a)$. The proof is complete.
\end{proof}

\textbf{Proof of Theorem \ref{cor-111}.}
By Proposition \ref{p: up implies gr}, (2) implies (1). By \cite[Theorem 3]{Sh}, (1) implies (3). By Theorem \ref{thm-1}, (3) implies (2).
\qed

\cite[Question 1]{Sh} asks if a hyperbolic space can always be isometrically embedded into a geodesically rich hyperbolic space with an isomorphic boundary. The real line does not admit such an embedding: since all the bi-infinite geodesics of a hyperbolic space whose boundary consists of two points fellow travel, the first condition of geodesically rich can not be satisfied. Corollary \ref{cor-112} completely characterizes all geodeisc hyperbolic spaces admitting such an embedding.

\textbf{Proof of Corollary \ref{cor-112}.}
Let $f:X\rightarrow Y$ be a quasi-isometric embedding which induces a bijection $\partial f$ from $\partial X$ and $\partial Y$. It suffices to show that $f(X)$ is $M$-roughly full in $Y$.  Let $y\in Y$. Since $Y$ is geodesically rich, by (1) of Definition \ref{gr}, there is a bi-infinite geodesic $\gamma$ such that $y$ is in the $r_1$-neighborhood of $\gamma$. Let $\xi_1$ and $\xi_2$ on $\partial Y$ be the endpoints of $\gamma$. Since $\partial f: \partial X\rightarrow \partial Y$ is bijective, there are $\xi'_1$ and  $\xi'_2$ on $\partial X$ such that $\partial f(\xi'_i)=\xi_i$ for $i=1,2$.

Since $X$ is a proper geodesic Gromov hyperbolic space, we know that there is a geodesic $\gamma'$ connecting $\xi'_1$ and $\xi'_2$. Let $\gamma''=f(\gamma')$. Then $\gamma''$ is a quasi-geodesic whose constants depends only on $f$. Using Lemma \ref{l-1}, one observes that $\gamma''$ is in the $r'$-neighborhood of $\gamma$, where $r'$ depends on $f$ and $\delta$. Hence $y$ is in the $M=(r_1+r')$-neighborhood of $\gamma''=f(\gamma')\subset f(X)$. Therefore, we know that $X$ is quasi-isometric to $Y$.

By Lemma \ref{l-p}, $Y$ has a pole. So Theorem \ref{cor-111} ensures that $Y$ is boundary rigid.  The second statement now follows from Proposition \ref{l-bg}. 
\qed

Combining the above proposition with Theorem \ref{cor-111}, we have the the following:

\bcor  Let $X$ be a proper geodesic $\delta$-hyperbolic space with a pole. Then the following are equivalent:
\begin{enumerate}
\item $X$ quasi-isometrically embeds into a geodesically rich $\delta'$-hyperbolic space $Y$ inducing a bijection between $\partial X$ and $\partial Y$.
\item $X$ is geodesically rich.
\item $\partial X$ is uniformly perfect.
\item $X$ is boundary rigid.
\end{enumerate}
\ecor

\bigskip


\end{document}